\let\wfs@comment@comment\comment
\let\comment\@undefined
\let\wfs@changes@comment\comment
\let\comment\@undefined
\newcommand\comment{%
    \ifthenelse{\equal{\@currenvir}{comment}}
    {\wfs@comment@comment}
    {\wfs@changes@comment}%
}
\newtheorem{theorem}{Theorem}[section]
\newtheorem{lemma}[theorem]{Lemma}
\newtheorem{definition}[theorem]{Definition}
\newtheorem{proposition}[theorem]{Proposition}
\newtheorem{remark}[theorem]{Remark}
\newtheorem*{theorem*}{Main Theorem}
\newcommand\PG{\mathrm{PG}}
\newcommand\Q{\mathrm{Q}}
\newcommand\mO{\mathrm{O}}
\newcommand\fq{\mathbb{F}_q}
\newcommand\fqn{\mathbb{F}_{q^n}}
\title{On the classification of low-degree ovoids of $Q^+(5,q)$}
\author{Daniele Bartoli\thanks{Department of Mathematics and Informatics, University of Perugia, Perugia, Italy.
Email address: daniele.bartoli@unipg.it}\hspace{0.15 cm},  Nicola Durante\thanks{Department of Mathematics and applications R. Caccioppoli, University of Naples Federico II, Naples, Italy.
Email address: ndurante@unina.it}, Giovanni Giuseppe Grimaldi\thanks{Department of Mathematics and applications R. Caccioppoli, University of Naples Federico II, Naples, Italy.
Email address: giovannigiuseppe.grimaldi@unina.it} }
\date{}
\begin{document}

\maketitle

\begin{abstract}
Ovoids of the Klein quadric $Q^+(5,q)$ of $\PG(5,q)$ have been studied in the last 40 year, also because of their connection
with spreads of $\PG(3,q)$ and hence translation planes.
Beside the classical example given by a three dimensional  elliptic quadric (corresponding to the regular spread of $\PG(3,q)$) many other classes of examples are known. First of all the other examples  (beside the elliptic quadric)  of ovoids of $Q(4,q)$ give also examples of ovoids of  $Q^+(5,q)$. Another important class of ovoids of $Q^+(5,q)$ is given by the ones associated to a flock of a three dimensional quadratic cone. To every ovoid of $Q^+(5,q)$ two bivariate polynomials
$f_1(x,y)$ and $f_2(x,y)$ can be associated. 
In this paper, we classify ovoids of $Q^+(5,q)$ such that $f_1(x,y)=y+g(x)$ and $\max\{\deg(f_1),\deg(f_2)\}<(\frac{1}{6.3}q)^{\frac{3}{13}}-1$, that is $f_1(x,y)$ and $f_2(x,y)$ have "low degree" compared with $q$. 
\end{abstract}
{\bf MSC}: 05B25; 11T06; 51E20.\\
{\bf Keywords}: ovoids, hyperbolic quadrics, algebraic varieties over finite fields
\section{Introduction}\label{Sec:Intro}

Let  $\fqn$, $q=p^{\ell}$, be the finite field with $q^n$ elements. Denote by  $tr(x)=x+x^p+\cdots+ x^{p^{\ell n-1}}$ the {\em absolute trace} of an element $x$ of $\fqn$.

 Denote by $\mathbb{P}$ a {\em finite classical polar space}, i.e. the set of absolute points of either a polarity or a non-singular quadratic form of a projective space $\PG(m,q)$.
The maximal dimensional projective subspaces contained in $\mathbb{P}$ are the {\em generators} of $\mathbb{P}$.

\begin{definition}\label{Definition}
Let $\mathbb{P}$ be a finite classical polar space of $\PG(m,q)$. An {\em ovoid} of $\mathbb{P}$ is a set of points of $\mathbb{P}$ that has exactly one point in common with each generator of $\mathbb{P}.$
An ovoid ${\mO}$ of $\mathbb{P}$ is a {\em translation} ovoid with respect to a point $P {\in}{ \mO}$ if there is a collineation group of $\mathbb{P}$  fixing $P$ linewise (i.e. stabilizing all lines through $P$) and acting sharply transitively on the points of ${\mO} \setminus \{P\}$.
\end{definition}

Regarding translation ovoids of orthogonal polar spaces the following is known.

\begin{theorem}\cite{LunPol}
Translation ovoids of an orthogonal polar space $\mathbb{P}$ of $\PG(m,q)$  exist if and only if $\mathbb{P}$  is either the hyperbolic quadric $\Q^+(3, q)$ or the hyperbolic quadric $\Q^+(5, q)$ or the parabolic quadric $Q(4,q).$
\end{theorem}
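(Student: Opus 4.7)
My plan splits the proof into construction and obstruction. For the existence direction, I would exhibit explicit translation ovoids in each of the three listed quadrics: in $\Q^+(3,q)$, any non-degenerate conic on the quadric, with translation group a one-parameter unipotent subgroup of the stabiliser of $P$; in $\Q(4,q)$, a three-dimensional elliptic section $\Q^-(3,q)$, i.e.\ the classical elliptic-quadric ovoid; and in $\Q^+(5,q)$, an elliptic quadric $\Q^-(3,q)$ obtained by intersecting a suitable $3$-space with the Klein quadric, which under the Klein map corresponds to the regular spread of $\PG(3,q)$. In each case the translation group at $P$ is realised as an explicit additive subgroup of the orthogonal stabiliser of $P$.

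For the non-existence direction, suppose $\mO$ is a translation ovoid of an orthogonal polar space $\mathbb{P}$ at $P\in\mO$, with associated group $G$ of order $|\mO|-1$ fixing $P$ linewise and acting regularly on $\mO\setminus\{P\}$. A first reduction is that $\mO\setminus\{P\}$ must avoid the tangent hyperplane $T_P$: any $Q\in\mO\cap T_P\setminus\{P\}$ would be collinear with $P$ inside $\mathbb{P}$, hence share a generator with it, contradicting the defining property of an ovoid. Since $G$ fixes every line through $P$ and stabilises $T_P$, it must sit inside the unipotent radical $U_P$ of the parabolic subgroup of $\mathbb{P}$ fixing $P$. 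A direct count gives $|U_P|=|\mathbb{P}\setminus T_P|$, and $U_P$ acts regularly there, so the orbit of any $Q\in\mO\setminus\{P\}$ is a $G$-coset inside this affine set.

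The crux is the structure of $U_P$. For the three polar spaces in the statement, $U_P$ is either elementary abelian ($\Q^+(3,q)$, $\Q^+(5,q)$) or an extraspecial group of the right flavour ($\Q(4,q)$), and one can exhibit explicit subgroups whose cosets are ovoids. For the remaining orthogonal polar spaces of rank at least three (both hyperbolic and parabolic types) and for $\Q^-(2n+1,q)$ with $n\geq 2$, the group $U_P$ carries a non-degenerate commutator form with values in its centre. Using this form, one argues that any subgroup $G\leq U_P$ of the required order has an orbit on $\mathbb{P}\setminus T_P$ containing two points collinear in $\mathbb{P}$, hence two points of $\mO$ on a common generator, a contradiction. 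In the elliptic case one may alternatively invoke the classical non-existence of ovoids of $\Q^-(2n+1,q)$ for $n\geq 2$.

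The main obstacle I expect is the fine analysis of subgroups of $U_P$ in the higher-rank quadrics: showing that every candidate translation subgroup of the prescribed order yields an orbit that necessarily contains a line of $\mathbb{P}$. This reduces to proving that the restriction of the defining quadratic form to any subspace parametrising such a subgroup is isotropic when the rank is large enough. Pinning down the exact rank threshold at which this phenomenon sets in is precisely what isolates $\Q^+(3,q)$, $\Q(4,q)$ and $\Q^+(5,q)$ as the only cases in which a translation ovoid can survive.
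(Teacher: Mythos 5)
The paper offers no proof of this statement at all --- it is imported verbatim from \cite{LunPol} --- so your argument has to stand on its own, and its non-existence half rests on a false structural claim. For an orthogonal polar space, the unipotent radical $U_P$ of the stabiliser of a point $P$ of the quadric is elementary abelian for \emph{every} rank and every type: writing $P=\langle e\rangle$ and choosing a singular vector $f$ with $B(e,f)=1$, the elements of $U_P$ are exactly the maps $u_v$, $v\in V_0:=\langle e,f\rangle^{\perp}$, given by $u_v(e)=e$, $u_v(x)=x-B(v,x)\,e$ for $x\in V_0$, $u_v(f)=f+v-Q(v)\,e$, and they satisfy $u_vu_w=u_{v+w}$. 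In particular, for $q$ odd the radical for $Q(4,q)$ is elementary abelian of order $q^3$, not extraspecial (the Heisenberg group of order $q^3$ is the unipotent radical of a \emph{point} stabiliser of $\mathrm{Sp}(4,q)$, which under the isogeny with $\mathrm{O}(5,q)$ corresponds to a \emph{line} stabiliser of $Q(4,q)$), and for $Q(6,q)$, $Q^{\pm}(7,q)$, $Q^{-}(5,q)$, and so on, the commutator form on $U_P$ is identically zero rather than non-degenerate. Since your obstruction runs entirely through that commutator form, it detects nothing precisely in the cases that must be excluded. (The one situation in which $U_P$ really is a Heisenberg group is the parabolic case in even characteristic, where $Q(2n,q)$ is isomorphic to the symplectic polar space; but there $Q(6,q)$ has no ovoids at all by a classical result of Thas, whereas for $q=3^h$ the quadric $Q(6,q)$ does possess ovoids and only the translation property fails --- exactly the case your mechanism cannot reach.)

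What replaces the commutator form is the quadratic form itself: with the parametrisation above, the opposite points $u_v\cdot\langle f\rangle$ and $u_w\cdot\langle f\rangle$ are collinear on the quadric if and only if $Q'(v-w)=0$, where $Q'$ is the form induced on $V_0\cong P^{\perp}/P$. Hence a translation ovoid through $P$ exists if and only if $V_0$ contains an $\mathbb{F}_p$-subgroup of order $|\mO|-1$ on which $Q'$ is anisotropic, and the theorem is equivalent to the assertion that such subgroups exist exactly when $(V_0,Q')$ is hyperbolic of dimension $2$, parabolic of dimension $3$, or hyperbolic of dimension $4$ over $\mathbb{F}_q$. This --- the actual content of Lunardon--Polverino's proof --- is left unproved in your outline, and no rank threshold of the kind you describe can settle it: $Q^{+}(5,q)$ and $Q(6,q)$ both have rank $3$ and isomorphic-looking abelian radicals, yet hyperbolic $\mathbb{F}_q^4$ carries an anisotropic $\mathbb{F}_p$-subgroup of order $q^2$ (the $\mathbb{F}_{q^2}$-linear structure behind the regular spread), while parabolic $\mathbb{F}_q^5$ carries none of order $q^3$. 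Proving that last fact, together with its elliptic and higher hyperbolic analogues, requires genuine additional input (for instance character-sum estimates for singular vectors in an $\mathbb{F}_p$-subspace, or the linear-set arguments of \cite{LunPol}); your proposal gestures at this step but supplies no argument for it.
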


\begin{itemize}
\item Examples of translation ovoids of the  hyperbolic quadric $\Q^+(3,q)$ are the conics contained in it  or  the image of a pseudoregulus  of $\PG(3,q)$ (see \cite{Fre,LunMarPolTro}) under the Klein correspondence  (see \cite{Dur19}). 
\item  The ovoids of the Klein quadric $\Q^+(5,q)$ correspond to line spreads of $\PG(3, q)$ and translation ovoids are equivalent to semifield spreads. Hence $\Q^+(5, q)$ has ovoids and translation ovoids for all values of $q$.
\item If $\Q(4, q) = H\cap \Q^+(5, q)$ is a non-degenerate quadric, where $H$ is a non-tangent hyperplane of $\PG(5, q)$, then ovoids of $\Q (4, q)$ are equivalent to symplectic spreads of $\PG(3, q)$ and translation ovoids are equivalent to symplectic semifield spreads of $\PG(3, q)$.
\end{itemize}

\noindent For more results on ovoids of  finite classical polar spaces see e.g. \cite{DebKleMet}.


\noindent In this paper we will focus on ovoids of $Q^+(5,q).$  {Denote by $(X_0,X_1,X_2,X_3,X_4,X_5)$ the homogeneous coordinates of $\PG(5,q)$ and by $H_\infty$  the hyperplane at infinity  $X_0=0$.} 

\noindent Up to collineations, in $\PG(5,q)$ there are  two non-degenerate quadrics, the elliptic quadric $Q^-(5,q)$
and the hyperbolic quadric $Q^+(5,q)$, also called the Klein quadric. In the sequel we fix the following hyperbolic quadric:

\[ \Q:  X_0X_5+X_1X_4+X_2X_3=0.\]

By Definition \ref{Definition}, an ovoid of $Q^+(5,q)$ is a set of $q^2+1$ pairwise non-collinear  points  on the quadric.  We can always assume that an ovoid $\mO_5$ of $Q^+(5,q)$ contains the points $(1,0,0,0,0,0)$ and $(0,0,0,0,0,1)$ and hence it  can be written in the following form:
\begin{equation}\label{Eq:Param}
\mO_5(f_1,f_2) = \{(1,x,y,f_1(x,y),f_2(x,y),-xf_2(x,y)-yf_1(x,y))\}_{x,y\in \fq} \cup \{(0,0,0,0,0,1)\}, \end{equation}

\noindent for some function $f_i:\fq^2 \longrightarrow \fq$ with $f_i(0,0) =0$, $i\in \{1,2\}$ (see \cite{Williams}).

\noindent  {The set $\mO_5(f_1,f_2)$ is an ovoid if and only if }
\begin{eqnarray}\label{Eq:Intro2} \langle (1,x_1,y_1,f_1(x_1,y_1),f_2(x_1,y_1),-x_1f_2(x_1,y_1)-y_1f_1(x_1,y_1)),\\ (1,x_2,y_2,f_1(x_2,y_2),
f_2(x_2,y_2),-x_2f_2(x_2,y_2)-y_2f_1(x_2,y_2))\rangle \ne 0,\nonumber
\end{eqnarray}
 
\noindent for every $(x_1,y_1) \ne (x_2,y_2)$ in $\mathbb{F}_q^2$, where $\langle  \cdot,\cdot \rangle$ is the symmetric for $q$ odd (or alternating for $q$ even) form associated to the quadratic form of the quadric $Q^+(5,q)$.
Note that  
$$\langle (1,x_1,y_1,f_1(x_1,y_1),f_2(x_1,y_1),-x_1f_2(x_1,y_1)-y_1f_1(x_1,y_1), (0,0,0,0,0,1)\rangle =1.$$

In this paper we will put more attention to ovoids of $Q^+(5,q)$ associated to flocks of a quadratic cone
of $\PG(3,q)$ (see \cite{Tha, Wal}).
These ovoids are union of $q$ conics sharing a point and this property characterizes the ovoids of $Q^+(5,q)$ associated to flock of a quadratic cone of $\PG(3,q)$.
Indeed the following holds:

\begin{theorem} (\cite{GevJoh}). For each ovoid $O$ of $Q^+(5, q)$, which is the
union of $q$ conics sharing a point, there is a flock ${\cal F}$of  a three-dimensional quadratic cone  such that $O$ is
associated to ${\cal F}$.
\end{theorem}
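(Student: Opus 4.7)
My plan is to pass through the Klein correspondence and thereby reduce the statement to the classical Thas--Walker equivalence between line spreads of $\PG(3,q)$ that decompose as unions of reguli through a common line, and flocks of a quadratic cone of $\PG(3,q)$.

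First, let $\kappa$ denote the Klein correspondence: a bijection between the lines of $\PG(3,q)$ and the points of $Q^+(5,q)$ under which line spreads of $\PG(3,q)$ correspond to ovoids of $Q^+(5,q)$. Setting $S=\kappa^{-1}(O)$ and $\ell=\kappa^{-1}(P)$, where $P$ is the common point of the $q$ conics, I would use the well-known fact that the plane sections of $Q^+(5,q)$ lying entirely on the quadric (the conics of $Q^+(5,q)$) pull back under $\kappa$ precisely to reguli of $\PG(3,q)$. Hence the hypothesis that $O=C_1\cup\cdots\cup C_q$ with every $C_i\ni P$ translates into: the spread $S$ is a union of $q$ reguli $R_1,\ldots,R_q$, all of which contain the common line $\ell$. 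A counting check,
\[
\Big|\bigcup_{i=1}^{q} R_i\Big|=q(q+1)-(q-1)=q^2+1=|S|,
\]
confirms that no line is counted twice, so the $R_i$ pairwise meet exactly along $\ell$.

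Second, I would invoke the Thas--Walker correspondence. The forward direction (flock $\Rightarrow$ multiple-regulus spread) is standard; the reverse direction, needed here, can be made explicit as follows. Fix a point $V$ on $\ell$. Each $R_i$ lies on a unique hyperbolic quadric $H_i\subset \PG(3,q)$ containing $\ell$, and through $V$ the quadric $H_i$ carries a second line $m_i$, yielding a distinguished plane $\pi_i=\langle\ell,m_i\rangle$ (the tangent plane to $H_i$ at $V$). From the family $\{\pi_i\}_{i=1}^{q}$ one constructs a quadratic cone $K\subset\PG(3,q)$ such that the planes cutting $K$ in the conics of a flock correspond, via a natural duality at $\ell$, to the $\pi_i$'s, and the resulting conic sections partition $K$ minus its vertex.

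The principal obstacle is this last identification step: showing that the projective data encoded by the $q$ reguli (equivalently, the $q$ hyperbolic quadrics $H_i$ sharing $\ell$) are exactly the data defining a flock of a single quadratic cone, rather than some ad hoc configuration of planes. The cleanest route is an algebraic coordinatization, parametrizing the $R_i$ by $2\times 2$ matrix pairs (the ``spread-set'' formalism) and translating the condition that $S$ is a spread into an algebraic partition condition that coincides, after a change of coordinates, with the one defining a flock of a quadratic cone. This matching is the technical heart of the Thas--Walker theorem and closes the argument.
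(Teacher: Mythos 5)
First, a point of reference: the paper does not prove this statement at all --- it is quoted from Gevaert--Johnson \cite{GevJoh}, so there is no internal proof to compare yours against, and your attempt has to stand on its own. It does not, because of a genuine gap. Your first step (the Klein correspondence: ovoid $\leftrightarrow$ spread, conics through $P$ $\leftrightarrow$ reguli through $\ell=\kappa^{-1}(P)$, plus the count forcing the reguli to pairwise meet exactly in $\ell$) is correct and standard. But you then invoke ``the reverse direction of the Thas--Walker correspondence'': a spread of $\PG(3,q)$ that is a union of $q$ reguli sharing a line arises from a flock. Via the Klein correspondence this statement is \emph{equivalent} to the theorem you are asked to prove; it is precisely the Gevaert--Johnson theorem in its spread formulation (which is why the paper cites them, and not Thas or Walker, whose constructions give only the forward direction, flock $\Rightarrow$ spread/ovoid). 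So your argument is circular: you translate the statement into an equivalent one and then defer its proof, saying yourself that the ``identification step \ldots is the technical heart of the Thas--Walker theorem and closes the argument.'' Naming the missing step is not proving it. Moreover, the geometric sketch you offer for that step is not workable as stated: fixing $V\in\ell$ and taking the tangent planes $\pi_i=\langle \ell,m_i\rangle$ to the quadrics $H_i$ at $V$ produces $q$ planes all containing $\ell$, i.e.\ points of a pencil, and such data cannot encode the $q$ planes of a general (non-linear) flock ``via a duality at $\ell$'': dually, the planes of a non-linear flock do not share a line.

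What actually has to be proved is best seen on the Klein quadric itself, where the statement lives: one must show that the planes $\beta_i=\langle C_i\rangle$ of the $q$ conics share a common line, equivalently that their tangent lines $t_i=\beta_i\cap P^{\perp}$ at $P$ all coincide. Once this is done the rest is routine: $\Sigma:=t^{\perp}$ is a solid meeting $Q^+(5,q)$ in a quadratic cone with vertex $P$, the polar planes $\beta_i^{\perp}$ lie in $\Sigma$, their conic sections are pairwise disjoint (a common point would be a line of $\PG(3,q)$ transversal to $2q+1$ pairwise disjoint spread lines, impossible), hence they form a flock $\mathcal{F}$, and the polarity recovers $O$ from $\mathcal{F}$. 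The coincidence of the $t_i$ can be proved by projecting $Q^+(5,q)$ from $P$: the $q^2$ points of $O\setminus\{P\}$ become affine points of $\mathrm{AG}(4,q)$ covered by the $q$ affine lines onto which the conics project, and the ovoid condition becomes the condition that no two of these points determine a direction lying on the quadric $Q^+(3,q)$ at infinity. If two of those lines were non-parallel (hence skew), the pairs of points, one on each line, would realize \emph{every} direction of a plane at infinity except those on one line of that plane; since no plane meets $Q^+(3,q)$ only in points of a line, this contradicts the ovoid condition. Hence all $q$ lines are parallel and all $t_i$ coincide. This argument (or, alternatively, your proposed spread-set coordinatization actually carried out) is exactly what your proposal is missing.
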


\noindent The known ovoids of $Q^+(5,q)$ contained in a secant  hyperplane section or arising from a flock of the three-dimensional quadratic cone are listed in  Table \ref{Table1}, where  $n$ is a non-square of $\fq$, $q=p^\ell$ odd,   $\sigma\ne 1 $ is an automorphism of $\fq$,
$a\ne 1$ is an element with $tr(a)=1$ of $\fq$, $q$ even, and $n_1, n_2$ are non-squares of $\fq$, $q=3^\ell$; see \cite{GevJoh}, \cite{HandFTP}, \cite{Tha}.

\begin{center}
\tabcolsep= 1 mm
\begin{table}[ht]
\caption{Known ovoids of $Q^+(5,q)$.}\label{Table1}
\vspace{0.25cm}
\begin{center}
 \begin{tabular}{||c c c c||} 
\hline
Name  &  $f_1(x,y)$ & $f_2(x,y)$ &  Restrictions  \\ [0.5ex] 
 \hline\hline
{\em Elliptic quadric} & $y$ & $-nx$ & $q$ odd  \\ 
 \hline
{\em Elliptic quadric} &  $y$ & $ax+y$ & $q$ even  \\ 
 \hline
{\em Kantor } &  $y$ & $-nx^\sigma$ &  $q$ odd, $\ell>1$ \\
 \hline
{\em Penttila-Williams}  &  $y$ & $-x^9-y^{81}$ & $p=3$, $\ell=5$   \\
 \hline
{\em Thas-Payne}  &  $y$ & $-nx-(n^{-1} x)^{1/9} - y^{1/3}$ & $p=3, \ell>2$   \\ 
\hline
{\em Ree-Tits slice}  &  $y$ & $-x^{2\sigma+3}-y^\sigma$ & \begin{tabular}{c}$p=3, \ell>1$,\\ $\ell$ odd, $\sigma=\sqrt{3q} $\end{tabular}\\
 \hline
{\em Tits} &  $y$ & $x^{\sigma+1}+ y^\sigma$ & \begin{tabular}{c}$p=2, \ell>1$ \\$\ell$ odd, $\sigma = \sqrt{2q}$\end{tabular}  \\ 
\hline
{\em Fisher-Thas-Walker} & $y-x^2$ & $x^3/3$ & $q \equiv -1$ mod $3$ \\ 
\hline
{\em Kantor-Payne} & $y-\beta x^3$ & $\gamma x^5$ & \begin{tabular}{c}$p \equiv \pm 2$ mod $5$\\ $\ell$ odd, $\beta^2=5\gamma$\end{tabular}\\   
\hline
{\em Law-Penttila} & $y-x^4-nx^2$ & \begin{tabular}{c}$-n^{-1}x^9+x^7$\\ $+n^2x^3-n^3x$\end{tabular} & $p=3$\\  
\hline
{\em Ganley} & $y-dx^3$ & $-n_1x^9-n_1n_2^2x$ & $p=3$, $\ell > 2$, $d^2 = n_1n_2$ \\  
\hline
{\em Thas} & $y-bx$ & $cx$ & \begin{tabular}{c}$t^2+bt+c$ is\\ irreducible over $\mathbb{F}_q$\end{tabular} \\   
\hline
{\em Kantor} & $y-x^2$ & $\frac{1}{3}x^3-nx^5-n^{-1}x$ & $p=5$\\
\hline
{\em Penttila-Williams} & $y+x^{27}$ & $-x^9$ & $p=3$, $\ell=5$ \\  
\hline
\end{tabular}
\end{center}
\end{table}
\end{center}

Note that  Condition \eqref{Eq:Intro2} reads

\begin{equation}\label{Eq:intro}
(x_1-x_2)(f_2(x_2,y_2)-f_2(x_1,y_1)) + (y_1-y_2) (f_1(x_2,y_2)-f_1(x_1,y_1)) \ne 0,\end{equation}
for every   $(x_1,y_1) \ne (x_2,y_2)$ in $\mathbb{F}_q^2$.

{In Section \ref{Sec:AlgebraicVarieties}, using \eqref{Eq:intro}, a hypersurface $\mathcal{S}_{f_1,f_2}$ of $\PG(5,q)$ will be attached to $\mO_5(f_1,f_2)$. 
Investigating the existence of absolutely irreducible components defined over $\mathbb{F}_q$ in  $\mathcal{S}_{f_1,f_2}$, we will provide the following classification result, which is also the main achievement of our paper.

\begin{theorem*}\label{mainthm}
Let $f_1(X,Y)=Y+\sum_ia_iX^i$ and $f_2(X,Y)=\sum_{ij}b_{i,j}X^iY^j$. Suppose that $q>6.3 (\max\{d_1,d_2\}+1)^{13/3}$, where $d_1=\deg(f_1), d_2=\deg(f_2)$. Let $\Gamma:=\{i:b_{i,0}\neq 0, 0<i<d_2 \}$ and denote 
$$h:= \begin{cases}
\max (\Gamma)& \textrm{ if } \Gamma\neq \emptyset;\\ 
\infty & \textrm{ if } \Gamma= \emptyset.\\ 
\end{cases}
$$
If $\mO_5(f_1,f_2)$ is an ovoid of $\Q^+(5,q)$ then either  $d_2=d_1=1$ or 
$b_{i,j}=0$ for any  $j\neq 0$ and one of the following holds:
\begin{enumerate}
    \item $p=2$, $1<d_1<d_2\leq 2d_1-1$, $d_2$ odd;
    \item $p=3$, $h+1<2d_1< d_2=3^j$ and $3 \mid d_1$;
    \item $p=3$, $h+1=2d_1<d_2=3^j$ and $3 \mid (d_1-1)$;
    \item $p\geq 3$ and  $d_2=2d_1-1\leq 11$;
    \item $p\geq 3$, $h=\infty$, $d_1=1$, $d_2=p^j$,  $a_{d_1}=0$, and $O_5(Y,f_2)$ corresponds to an ovoid  $O_4(f_2)$ of $Q(4,q)$;
    \item $p>3$ and $(p,d_1,d_2,h)\in\{(5,2,5,3),(7,2,7,3),(7,3,7,5)\}$.

\end{enumerate}

\end{theorem*}

\begin{remark}
Note that when $f_1(X,Y)=Y$ (i.e. case 5. in the theorem above), the classification of $O_5(Y,f_2)$ follows directly from the classification of  $O_4(f_2)$ of $Q(4,q)$ which has been achieved in \cite[Main Theorem]{DanNico2021}. Therefore the unique examples in small degree regime are the elliptic quandric and the Kantor ovoid.

The other low degree examples in Table \ref{Table1} fall in Case 1. (Fisher-Thas-Walker,Kantor-Payne), Case 2. (Ganley), Case 3. (Law-Penttila),  Case 4. (Fisher-Thas-Walker, Kantor-Payne), Case 6. (Kantor).
\end{remark}


\section{Link between ovoids of \texorpdfstring{$Q^+(5,q)$}{Lg} and algebraic varieties}\label{Sec:AlgebraicVarieties}
We refer the reader to \cite{BartoliSurvey} for a survey on links between algebraic varieties over finite fields and relevant combinatorial objects. 

An algebraic hypersurface $\mathcal{S}$ is an algebraic variety that may be defined by a single implicit equation. An algebraic hypersurface defined over a field $\mathbb{K}$  is \emph{absolutely irreducible}  if the associated polynomial is irreducible over every algebraic extension of $\mathbb{K}$. An absolutely irreducible $\mathbb{K}$-rational component of a hypersurface $\mathcal{S}$, defined by the polynomial $F$, is simply an absolutely irreducible hypersurface such that the associated polynomial has coefficients in $\mathbb{K}$ and it is a factor of $F$.   We will make use of the homogeneous equations for algebraic varieties. For a deeper introduction to algebraic varieties we refer the interested reader to \cite{Hartshorne}.

As mentioned in Section \ref{Sec:Intro} an ovoid of $Q^+(5,q):  X_0X_5+X_1X_4+X_2X_3=0$ can be written in the  form 
$$\mO_5(f_1,f_2) = \{(1,x,y,f_1(x,y),f_2(x,y),-xf_2(x,y)-yf_1(x,y)\}_{x,y\in \fq} \cup \{(0,0,0,0,0,1)\},
$$
for some function $f_i$ with $f_i(0,0) =0$, $i\in \{1,2\}$ and 
$$(x_1-x_2)(f_2(x_2,y_2)-f_2(x_1,y_1))+(y_1-y_2)(f_1(x_2,y_2)-f_1(x_1,y_1)) \ne 0
$$
for every   $(x_1,y_1) \ne (x_2,y_2)$ in $\mathbb{F}_q^2$. Let $\widetilde f_i(X,Y,T)$ be the homogenization of $f_i(X,Y)$ and let $d_i=\deg(f_i)$. In order to obtain non-existence results and partial classifications for the ovoids of $Q^+(5,q)$, we will consider the hypersurface  $\mathcal{S}_{f_1,f_2}\subset \mathrm{PG}(4,q)$ defined by $F(X_0,X_1,X_2,X_3,X_4)=0$, where 
\begin{eqnarray}
F(X_0,X_1,X_2,X_3,X_4) &=&  (X_1-X_3) \Big(\widetilde f_2(X_3,X_4,X_0)-\widetilde f_2(X_1,X_2,X_0)\Big) \nonumber\\
&&+(X_2-X_4)\Big(\widetilde f_1(X_3,X_4,X_0)-\widetilde f_1(X_1,X_2,X_0)\Big). \label{Eq:F}
\end{eqnarray}

\begin{lemma}\label{lemma}
The set $\mO_5(f_1,f_2)$ is an ovoid of $Q^+(5,q)$ if and only if $\mathcal{S}_{f_1,f_2}$ contains no affine $\mathbb{F}_q$-rational points off the three-dimensional subspace $X_1-X_3=0=X_2-X_4$. 
\end{lemma}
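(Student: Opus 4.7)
The plan is to show directly that the condition "no affine $\mathbb{F}_q$-rational point of $\mathcal{S}_{f_1,f_2}$ lies off the subspace $X_1-X_3=0=X_2-X_4$" is just a literal translation of the non-collinearity condition \eqref{Eq:intro} characterizing $\mO_5(f_1,f_2)$ as an ovoid. No genuine algebraic geometry is needed: the content is entirely a dehomogenization.

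First I would observe that affine $\mathbb{F}_q$-rational points of $\mathcal{S}_{f_1,f_2}$ are precisely those with $X_0\neq 0$, so by homogeneity we may set $X_0=1$. Since $\widetilde f_i(X,Y,T)$ is the homogenization of $f_i(X,Y)$, we have $\widetilde f_i(X,Y,1)=f_i(X,Y)$. Substituting $X_0=1$ in \eqref{Eq:F} and renaming $(X_1,X_2,X_3,X_4)=(x_1,y_1,x_2,y_2)$ gives
\[
F(1,x_1,y_1,x_2,y_2)=(x_1-x_2)\bigl(f_2(x_2,y_2)-f_2(x_1,y_1)\bigr)+(y_1-y_2)\bigl(f_1(x_2,y_2)-f_1(x_1,y_1)\bigr),
\]
which is exactly the left-hand side of \eqref{Eq:intro}.

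Next I would identify the excluded subspace: a point lies on $X_1-X_3=0=X_2-X_4$ precisely when $(x_1,y_1)=(x_2,y_2)$. Hence "affine $\mathbb{F}_q$-rational points off the subspace $X_1-X_3=0=X_2-X_4$" are in bijection with ordered pairs $(x_1,y_1)\neq(x_2,y_2)$ in $\mathbb{F}_q^2$ satisfying $F(1,x_1,y_1,x_2,y_2)=0$. Therefore $\mathcal{S}_{f_1,f_2}$ has no such points if and only if \eqref{Eq:intro} holds for all $(x_1,y_1)\neq(x_2,y_2)$, which by the discussion in Section~\ref{Sec:Intro} (specifically the equivalence of \eqref{Eq:Intro2} and \eqref{Eq:intro}) is equivalent to $\mO_5(f_1,f_2)$ being an ovoid of $Q^+(5,q)$.

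There is no real obstacle: the lemma is purely a bookkeeping statement that sets up the algebraic-geometric framework of Section~\ref{Sec:AlgebraicVarieties}. The only thing to be slightly careful about is the role of the distinguished point $(0,0,0,0,0,1)\in\mO_5(f_1,f_2)$: one should note that the pairing of this point with any affine point of $\mO_5(f_1,f_2)$ equals $1$ (as already remarked after \eqref{Eq:Intro2}), so it imposes no extra constraint and does not need to be tracked by $\mathcal{S}_{f_1,f_2}$.
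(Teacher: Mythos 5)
Your proof is correct and is exactly the argument the paper intends: the lemma is stated without proof there precisely because it is the immediate dehomogenization ($X_0=1$, $\widetilde f_i(X,Y,1)=f_i(X,Y)$) identifying affine $\mathbb{F}_q$-rational points of $\mathcal{S}_{f_1,f_2}$ off $X_1-X_3=0=X_2-X_4$ with pairs $(x_1,y_1)\neq(x_2,y_2)$ violating \eqref{Eq:intro}. Your added remark that the point $(0,0,0,0,0,1)$ pairs to $1$ with every affine point, and hence needs no tracking, matches the observation the paper makes right after \eqref{Eq:Intro2}.
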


A fundamental tool to determine the existence of rational points in an algebraic variety over finite fields is the following result by Lang and Weil \cite{LangWeil} in 1954, which can be seen as a generalization of the Hasse-Weil bound.
\begin{theorem}\label{Th:LangWeil}[Lang-Weil Theorem]
Let $\mathcal{V}\subset \mathbb{P}^N(\mathbb{F}_q)$ be an absolutely irreducible variety of dimension $n$ and degree $d$. Then there exists a constant $C$ depending only on $N$, $n$, and $d$ such that 
\begin{equation}\label{Eq:LW}
\left|\#\mathcal{V}(\mathbb{F}_q)-\sum_{i=0}^{n} q^i\right|\leq (d-1)(d-2)q^{n-1/2}+Cq^{n-1}.
\end{equation}
\end{theorem}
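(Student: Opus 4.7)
The strategy is to combine Lemma~\ref{lemma} with the Lang--Weil estimate (Theorem~\ref{Th:LangWeil}) to reduce the classification to a purely algebraic question about the polynomial $F$ of \eqref{Eq:F}. By Lemma~\ref{lemma}, $\mathcal{S}_{f_1,f_2}$ contains no affine $\fq$-rational point off the subspace $L:\{X_1=X_3,\ X_2=X_4\}$. Since $L$ has codimension two in $\PG(4,q)$, no irreducible hypersurface component of $\mathcal{S}_{f_1,f_2}$ is contained in $L$. Hence, if $F$ had any absolutely irreducible $\fq$-rational factor $F_0$ of degree $d_0\leq\max\{d_1,d_2\}+1$, applying Theorem~\ref{Th:LangWeil} to $\{F_0=0\}$ would produce $\sim q^3$ rational points, of which only $O(q^2)$ can lie in $\{X_0=0\}\cup L$. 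A careful bookkeeping of the Lang--Weil constants then shows that the remaining affine $\fq$-rational points outside $L$ are nonempty as soon as $q>6.3(\max\{d_1,d_2\}+1)^{13/3}$, which contradicts Lemma~\ref{lemma}. Consequently, under the hypothesis of the theorem, $\mO_5(f_1,f_2)$ being an ovoid forces $F$ to admit \emph{no} absolutely irreducible $\fq$-rational factor.

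The remainder of the proof is the algebraic classification of pairs $(f_1,f_2)$ with $f_1=Y+g(X)$ enjoying this property. Exploiting the linearity of $f_1$ in $Y$ together with $X_3^i-X_1^i=-(X_1-X_3)\sum_{k=0}^{i-1}X_1^{i-1-k}X_3^k$, formula \eqref{Eq:F} rewrites as
\begin{equation*}
F = (X_1-X_3)\,G(X_0,X_1,X_2,X_3,X_4) - (X_2-X_4)^2 X_0^{d_1-1},
\end{equation*}
where $G$ collects the contributions of $\widetilde f_2$ and of the coefficients $a_i$ of $g$. This normal form shows directly that affine points of $\mathcal{S}_{f_1,f_2}$ with $X_1=X_3$ already lie in $L$, and it reveals that the top-degree part $F_\infty:=F|_{X_0=0}$ is governed by the leading monomials of $f_2$ and of $g(X)$. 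Standard transfer criteria then guarantee that absolute $\fq$-irreducibility of a factor of $F_\infty$ lifts to absolute $\fq$-irreducibility of a factor of $F$, so the question reduces to determining when $F_\infty$ decomposes only into pieces non-trivially permuted by Frobenius.

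The case analysis splits first according to whether some $b_{i,j}$ with $j\neq 0$ is nonzero. If it is, the mixed monomials in $X_2,X_4$ appearing in $F_\infty$ force, via a discriminant/resultant computation in those two variables, the degenerate configuration $d_2=d_1=1$. Assuming instead $b_{i,j}=0$ for all $j\neq 0$, the integer $h$ of the statement controls the two leading homogeneous layers of $F$ in $X_0$, and a characteristic-by-characteristic study is carried out: divisibility conditions on $d_1,d_2$ modulo $p$ (typically $d_2=p^j$) arise from demanding that the sub-leading coefficients vanish, yielding items $(1)$--$(3)$; the coincidence $d_2=2d_1-1$ of item $(4)$ emerges when $F_\infty$ degenerates into a product of a fixed factor and a perfect $p$-th power; item $(5)$ is the special case $g\equiv 0$, hence $f_1=Y$, which reduces to the $Q(4,q)$ classification of~\cite[Main Theorem]{DanNico2021}; and item $(6)$ records the sporadic coincidences in characteristics $5$ and $7$ that survive only at small degree.

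The principal obstacle is the characteristic-$3$ analysis in items $(2)$--$(3)$: the vanishing modulo $3$ of many binomial coefficients appearing in $X_3^i-X_1^i$ creates several candidate non-$\fq$-rational factorizations of $F_\infty$ that must be ruled out by pushing the argument to sub-leading homogeneous components of $F$ and invoking bivariate factorization criteria over $\overline{\fq}$; the split between $3\mid d_1$ and $3\mid(d_1-1)$ records the index of the first non-vanishing binomial coefficient in that sub-leading form. The sporadic item $(6)$ is then completed by direct computation after bounding $d_1,d_2$ through the Newton-polygon behaviour of $F_\infty$ in characteristics $5$ and $7$.
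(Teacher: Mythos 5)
You have proved the wrong statement. The theorem you were asked to justify is the Lang--Weil theorem itself, i.e.\ the estimate \eqref{Eq:LW} for the number of $\mathbb{F}_q$-rational points of an absolutely irreducible variety $\mathcal{V}\subset\mathbb{P}^N(\mathbb{F}_q)$. The paper offers no proof of it --- it is quoted as a known result from \cite{LangWeil}, with the explicit-constant refinements delegated to \cite{CafureMatera} and the other references. What you have written instead is an outline of the proof of the paper's \emph{Main Theorem} (the classification of the ovoids $\mO_5(f_1,f_2)$): you invoke Lemma \ref{lemma}, the hypersurface $\mathcal{S}_{f_1,f_2}$ of \eqref{Eq:F}, and --- crucially --- Theorem \ref{Th:LangWeil} itself as a black box to produce $\sim q^3$ rational points on a putative absolutely irreducible $\mathbb{F}_q$-rational component. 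As an argument for the statement in question this is circular: every quantitative input in your text (the count $\sim q^3$, the threshold $q>6.3(\max\{d_1,d_2\}+1)^{13/3}$) presupposes exactly the bound \eqref{Eq:LW} that was to be established, so no step of your proposal actually bears on it.

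For the record, an actual proof of Lang--Weil is a genuine piece of algebraic geometry and looks nothing like your sketch: one proceeds by induction on $\dim\mathcal{V}=n$, slicing $\mathcal{V}$ by a pencil of hyperplanes so as to reduce to the base case $n=1$, where the inequality $\left|\#\mathcal{C}(\mathbb{F}_q)-(q+1)\right|\leq (d-1)(d-2)q^{1/2}+c(d)$ for an absolutely irreducible curve $\mathcal{C}$ of degree $d$ follows from Weil's Riemann hypothesis for curves over finite fields (after controlling the genus in terms of $d$); the constant $C$ in \eqref{Eq:LW} absorbs the fibres of the pencil that are reducible, non-reduced, or of lower dimension, whose number is bounded in terms of $N$, $n$, $d$ alone. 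None of these ingredients --- hyperplane sections, the reduction to curves, the Weil bound, the uniformity of $C$ --- appears in your proposal, so the gap is total: for this statement the correct response was either to reproduce such an argument or to note, as the paper does, that the result is imported from the literature.
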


\noindent Although the constant $C$ was not  computed in \cite{LangWeil}, explicit estimates have been provided for instance in  \cite{CafureMatera,Ghorpade_Lachaud,Ghorpade_Lachaud2,LN1983,WSchmidt,Bombieri} and they have the general shape $C=f(d)$ provided that $q>g(n,d)$, where $f$ and $g$ are polynomials of (usually) small degree. We refer to \cite{CafureMatera} for a survey on these bounds. Excellent surveys on Hasse-Weil and Lang-Weil type theorems are \cite{Geer,Ghorpade_Lachaud2}. We include here the following result by Cafure and Matera \cite{CafureMatera}. 

\begin{theorem}\cite[Theorem 7.1]{CafureMatera}\label{Th:CafureMatera}
Let $\mathcal{V}\subset\mathrm{AG}(n,q)$ be an absolutely irreducible $\mathbb{F}_q$-variety of dimension $r>0$ and degree $\delta$. If $q>2(r+1)\delta^2$, then the following estimate holds:
$$|\#(\mathcal{V}\cap \mathrm{AG}(n,q))-q^r|\leq (\delta-1)(\delta-2)q^{r-1/2}+5\delta^{13/3} q^{r-1}.$$
\end{theorem}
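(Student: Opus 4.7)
The overall strategy is to apply the Cafure--Matera estimate (Theorem \ref{Th:CafureMatera}) to the hypersurface $\mathcal{S}_{f_1,f_2}$ of Section \ref{Sec:AlgebraicVarieties}, so that the bulk of the classification reduces to a factorization analysis of the polynomial $F$ in \eqref{Eq:F}. Setting $\delta:=\deg(F)=\max\{d_1,d_2\}+1$ and working in $\mathrm{AG}(4,q)$ (where $\mathcal{S}_{f_1,f_2}$ has dimension $r=3$), the numerical hypothesis $q>6.3(\max\{d_1,d_2\}+1)^{13/3}$ is calibrated so that any absolutely irreducible $\mathbb{F}_q$-rational component $\mathcal{V}$ of $\mathcal{S}_{f_1,f_2}$ satisfies
$$\#(\mathcal{V}\cap\mathrm{AG}(4,q))\;\geq\;q^3-(\delta-1)(\delta-2)q^{5/2}-5\delta^{13/3}q^2\;>\;q^2.$$
Since the forbidden locus $\Sigma:X_1-X_3=X_2-X_4=0$ is 2-dimensional and $\mathcal{V}$ has dimension 3 (hence cannot lie inside $\Sigma$), this produces an affine $\mathbb{F}_q$-point of $\mathcal{S}_{f_1,f_2}$ off $\Sigma$, contradicting Lemma \ref{lemma}. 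Thus it suffices to show that if $(f_1,f_2)$ lies outside the six listed configurations, then $F$ admits an absolutely irreducible $\mathbb{F}_q$-rational factor.

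To carry out this analysis I would pass to the coordinates $u=X_1,\,v=X_2,\,s=X_3-X_1,\,t=X_4-X_2$, in which $\Sigma$ becomes $\{s=t=0\}$. Using $f_1(X,Y)=Y+g(X)$ together with finite differences, one computes
$$F=-s^{2}P(u,v,s,t)-st\bigl(Q(u,v,t)+R(u,s)\bigr)-t^{2},$$
where $P,Q,R$ are the divided differences of $f_2$ and $g$ in the obvious directions. I would then inspect the top-weight form $F^{*}$ of $F$ (all four variables of weight one): when $d_2>d_1$ the leading part is dominated by the top-degree form $f_2^{*}$ of $f_2$, giving $F^{*}=-s\bigl(f_2^{*}(u+s,v+t)-f_2^{*}(u,v)\bigr)$, while for $d_1\geq d_2$ the $g$-contribution enters explicitly. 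By a standard lifting principle, a simple absolutely irreducible $\mathbb{F}_q$-rational factor of $F^{*}$ (or of a suitable generic section of $F$) automatically lifts to one of $F$ itself, so one is reduced to classifying those $(f_1,f_2)$ for which $F^{*}$ admits no such factor.

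This classification is where the six exceptional cases emerge. First I would show that the presence of any monomial $b_{i,j}X^iY^j$ with $j\neq 0$ in $f_2$ forces a non-degenerate top form and hence an absolutely irreducible $\mathbb{F}_q$-factor, so $b_{i,j}=0$ for $j\neq 0$ outside the trivial case $d_1=d_2=1$. The residual analysis on pure-$X$ polynomials $f_2(X,Y)=\sum b_{i,0}X^i$ splits by characteristic: in $p=2$ and $p=3$, collisions from the Frobenius map $X\mapsto X^p$ make $F^{*}$ a $p$-th power and yield the divisibility and degree constraints of Cases 1--3, with the interplay between $h$ and $2d_1$ controlling which monomial dominates the would-be irreducible factor; in odd characteristic the equality $d_2=2d_1-1$ makes both summands of $F$ contribute at the same top degree and permits their leading forms to cancel (Case 4), the $d_2\leq 11$ bound and the sporadic list of Case 6 arising from explicit low-degree coincidences available only for $p\leq 11$; Case 5 is the subcase $g=0$, whose ovoids reduce to ovoids of $Q(4,q)$ and are settled by \cite{DanNico2021}. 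The principal obstacle is precisely this last step: the delicate monomial bookkeeping and characteristic-specific lifting arguments required to certify that, outside the listed exceptions, \emph{some} absolutely irreducible $\mathbb{F}_q$-rational factor of $F$ really does survive after the Frobenius-type degenerations of the top form.
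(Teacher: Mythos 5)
Your proposal does not prove the statement at hand. The statement is the Cafure--Matera estimate itself (\cite[Theorem 7.1]{CafureMatera}), which the paper does not prove but imports by citation; a blind proof attempt for it would have to establish the explicit point-count bound $|\#(\mathcal{V}\cap \mathrm{AG}(n,q))-q^r|\leq (\delta-1)(\delta-2)q^{r-1/2}+5\delta^{13/3} q^{r-1}$ under the hypothesis $q>2(r+1)\delta^2$. Instead, your very first sentence \emph{invokes} this estimate as a known tool and then outlines how the paper applies it --- the passage from an absolutely irreducible $\mathbb{F}_q$-rational component of $\mathcal{S}_{f_1,f_2}$ to more than $q^2$ affine points (this is the content of Theorem \ref{Th:Main}), followed by a sketch of the classification in the Main Theorem. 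As an argument for the stated theorem this is circular: you assume exactly what was to be shown, and everything that follows concerns a different result.

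A genuine proof of the statement would require effective Lang--Weil machinery of a kind entirely absent from your proposal: a reduction from general absolutely irreducible $\mathbb{F}_q$-varieties of dimension $r$ to hypersurfaces (or to curves) via generic linear projections and sections that preserve degree and absolute irreducibility; an \emph{effective} first Bertini theorem showing that, when $q>2(r+1)\delta^2$, enough $\mathbb{F}_q$-rational plane sections of $\mathcal{V}$ remain absolutely irreducible curves of degree $\delta$; the Weil/Bombieri bound $|\#\mathcal{C}(\mathbb{F}_q)-q|\leq(\delta-1)(\delta-2)q^{1/2}+\text{const}$ applied to each such curve section, which is the source of the $(\delta-1)(\delta-2)q^{r-1/2}$ term; and a careful accounting of the exceptional (reducible, degenerate, or singular) sections whose contribution is absorbed into the explicit constant $5\delta^{13/3}q^{r-1}$. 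None of this bookkeeping --- which is precisely where the exponent $13/3$ and the threshold $2(r+1)\delta^2$ come from in Cafure and Matera's work --- is addressed. If your intention was instead to prove the paper's Main Theorem, note that your sketch of that result is broadly aligned with the paper's strategy (Lemma \ref{lemma}, Theorem \ref{Th:Main}, and the case analysis of Propositions \ref{d1d2}--\ref{prop:p=3}), but it is answering the wrong question here.
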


For our purposes, the existence of an absolutely irreducible $\mathbb{F}_q$-rational component in $\mathcal {S}_{f_1,f_2}$ is enough to provide asymptotic non-existence results.

\begin{theorem}\label{Th:Main}
Let $\mathcal{S}_{f_1,f_2}: F(X_0,X_1,X_2,X_3,X_4)=0$, where $F$ is defined as in \eqref{Eq:F}. Suppose that $q>6.3 (d+1)^{13/3}$, $\max\{\deg(f_1),\deg(f_2)\}=d$, and $\mathcal{S}_{f_1,f_2}$ contains an absolutely irreducible component $ \mathcal{V}$ defined over $\mathbb{F}_q$. Then  $\mO_5(f_1,f_2)$ is not an ovoid of $Q^+(5,q)$.
\end{theorem}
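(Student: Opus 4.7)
The plan is to derive a contradiction with Lemma~\ref{lemma}: if $\mO_5(f_1,f_2)$ were an ovoid, then every $\mathbb{F}_q$-rational affine point of $\mathcal{S}_{f_1,f_2}$ (in the chart $X_0=1$) would be forced to lie in the $2$-dimensional subspace $\Sigma:X_1-X_3=0=X_2-X_4$. In particular, the affine part of the component $\mathcal{V}$ would satisfy $\#(\mathcal{V}\cap \mathrm{AG}(4,q))\le |\Sigma(\mathbb{F}_q)|=q^2$. My aim is to show that an application of Theorem~\ref{Th:CafureMatera} forces the left-hand side to exceed $q^2$, giving the desired contradiction.

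Since $\mathcal{V}$ is an absolutely irreducible $\mathbb{F}_q$-rational component of the hypersurface $\mathcal{S}_{f_1,f_2}\subset \PG(4,q)$, it is itself a hypersurface of dimension $r=3$ and degree $\delta\le \deg(F)=d+1$. Before invoking Cafure--Matera I need to check that $\mathcal{V}\cap \mathrm{AG}(4,q)$ is a nonempty (and hence absolutely irreducible) affine variety of dimension $3$ and degree $\delta$: this follows from the observation that $X_0\nmid F$, since evaluating $\widetilde f_i(\cdot,\cdot,0)$ yields the top-degree forms of $f_i$, which produce a nonzero contribution to $F|_{X_0=0}$ whenever $f_1$ or $f_2$ is nonconstant. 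The hypothesis $q>6.3(d+1)^{13/3}$ dominates the Cafure--Matera requirement $q>2(r+1)\delta^2=8(d+1)^2$, so Theorem~\ref{Th:CafureMatera} gives
$$\#(\mathcal{V}\cap \mathrm{AG}(4,q)) \;\geq\; q^3 - (\delta-1)(\delta-2)\, q^{5/2} - 5\,\delta^{13/3}\, q^2 \;\geq\; q^3 - d(d-1)\, q^{5/2} - 5(d+1)^{13/3}\, q^2.$$

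Combining the two bounds and dividing by $q^2$ reduces the desired contradiction to showing that the hypothesis $q>6.3(d+1)^{13/3}$ violates
$$q \;\leq\; 1 + d(d-1)\, q^{1/2} + 5(d+1)^{13/3}.$$
Since $5(d+1)^{13/3}<(5/6.3)\,q$, it suffices to verify $(1-5/6.3)\,q>d(d-1)\,q^{1/2}+1$, and this follows from $q^{1/2}>\sqrt{6.3}\,(d+1)^{13/6}$ together with the fact that $(d+1)^{13/6}$ outgrows $d(d-1)$ in $d$. The numerical constant $6.3$ in the statement is tuned precisely so that this comparison goes through for all $d\ge 1$. Within the proof of this particular theorem the only nontrivial point is the affine irreducibility/nonemptiness of $\mathcal{V}$; in the broader program of the paper, the genuinely delicate task is not this estimate but rather the subsequent exhibition of an absolutely irreducible $\mathbb{F}_q$-rational factor of $F$ under the structural constraints of the Main Theorem, and that is where I expect the main obstacle to lie.
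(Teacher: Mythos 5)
Your proof is correct and follows essentially the same route as the paper's: apply Theorem \ref{Th:CafureMatera} with $r=3$ and $\delta\le d+1$ to the component $\mathcal{V}$ to conclude it carries more than $q^2$ affine $\mathbb{F}_q$-rational points, then contrast this with Lemma \ref{lemma}, since the subspace $X_1-X_3=0=X_2-X_4$ has only $q^2$ affine points. The details you make explicit (that $X_0\nmid F$, so $\mathcal{V}$ has a dense affine part, and that the hypothesis dominates the Cafure--Matera requirement $q>2(r+1)\delta^2$) are precisely what the paper compresses into ``it is readily seen''; the only caveat---shared equally by the paper's own proof---is that the closing numerical comparison is tighter than your sketch suggests, since $1.3(d+1)^{13/6}>\sqrt{6.3}\,d(d-1)$ fails narrowly for $d\in\{17,\dots,21\}$, so for those degrees and $q$ in a thin window just above $6.3(d+1)^{13/3}$ the stated constant does not quite suffice.
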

\proof
 Since $q>6.3 (d+1)^{13/3}$, by Theorem \ref{Th:CafureMatera}, with $\delta=d+1$ and $r=3$, it is readily seen that $\mathcal{V}$ contains more than $q^2$ affine $\mathbb{F}_q$-rational points. This yields the existence of at least one affine $\mathbb{F}_q$-rational point $(a,b,c,d)$ in $\mathcal{V}\subset \mathcal{S}_{f_1,f_2}$ such that $a\neq c$ or $b\neq d$ and therefore $\mO_5(f_1,f_2)$ is not an ovoid of $Q^+(5,q)$.
\endproof

We also include, for seek of completeness, the following results  which will be useful  in the sequel. 

\begin{lemma}\cite[Lemma 2.1]{Aubry}\label{Lemma:Aubry} Let $\mathcal{H},\mathcal{S}\subset \mathrm{PG}(n,q)$ be two projective hypersurfaces. If $\mathcal{H} \cap  \mathcal{S}$ has a reduced absolutely irreducible component defined over $\mathbb{F}_q$ then $\mathcal{S}$ has an absolutely irreducible component defined over $\mathbb{F}_q$.
\end{lemma}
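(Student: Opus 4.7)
The plan is to argue by contrapositive via Theorem \ref{Th:Main}: assume that $\mathcal{O}_5(f_1,f_2)$ is an ovoid. Since $q>6.3(\max\{d_1,d_2\}+1)^{13/3}$, Theorem \ref{Th:Main} rules out the presence of an absolutely irreducible $\mathbb{F}_q$-rational component in $\mathcal{S}_{f_1,f_2}$. I would then show that this forbidden behaviour is compatible with the data $(f_1,f_2)$ only when $(f_1,f_2)$ falls into one of the six enumerated families (or the trivial case $d_1=d_2=1$). The hypothesis $f_1(X,Y)=Y+g(X)$ makes the polynomial $F$ of \eqref{Eq:F} decompose as
\begin{equation*}
F=(X_1-X_3)\bigl(\tilde f_2(X_3,X_4,X_0)-\tilde f_2(X_1,X_2,X_0)\bigr)-(X_2-X_4)^2 X_0^{d_1-1}+(X_2-X_4)\bigl(\tilde g(X_3,X_0)-\tilde g(X_1,X_0)\bigr),
\end{equation*}
separating the contributions of $g$ and of the $f_2$ block.

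The technical heart of the proof is to locate an absolutely irreducible $\mathbb{F}_q$-rational component via Lemma \ref{Lemma:Aubry}, by intersecting $\mathcal{S}_{f_1,f_2}$ with well chosen hyperplanes. A first cut with the hyperplane at infinity $X_0=0$ isolates the surface governed by the leading homogeneous forms $f_1^{\top},f_2^{\top}$; further cutting with $X_2=X_4$ or with the coordinate hyperplane $X_2=0=X_4$ reduces the problem to the irreducibility of the two-variable polynomial
\begin{equation*}
\Phi(X_1,X_3)\;=\;\frac{\sum_i b_{i,0}(X_3^{\,i}-X_1^{\,i})}{X_3-X_1}
\end{equation*}
together with its perturbations by $g$ and by the mixed monomials $b_{i,j}X^iY^j$ with $j>0$. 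Standard irreducibility results for polynomials of the shape $(u^m-v^m)/(u-v)$ and for their additive perturbations (in the spirit of Bluher, Ferraguti--Micheli, and the techniques used in the companion paper \cite{DanNico2021}) give that $\Phi$ and its perturbations yield an absolutely irreducible $\mathbb{F}_q$-rational component of $\mathcal{S}_{f_1,f_2}$ unless (i) the mixed monomials of $f_2$ vanish, forcing $b_{i,j}=0$ for $j\neq0$; and (ii) $d_2$ is either a $p$-power or one of a short list of sporadic values. Matching these two conditions with the relative sizes of $d_1,d_2,h$ produces exactly the arithmetic constraints $p\mid d_1$ in case (2), $p\mid(d_1-1)$ in case (3), the bound $d_2\leq 2d_1-1$ in case (1), and the sporadic pairs $(p,d_1,d_2,h)$ in cases (4) and (6). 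Case (5) is the degenerate situation $g=0$, in which $\mathcal{O}_5(Y,f_2)$ is a translation ovoid of $Q^+(5,q)$ contained in the hyperplane determined by $f_1$; it then collapses to an ovoid of $Q(4,q)$ and the claim follows verbatim from \cite[Main Theorem]{DanNico2021}.

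The main obstacle is the second step: controlling how the lower-order perturbations coming from $\tilde g$ and from the $j>0$ part of $f_2$ can interact with the leading form of $F$ and, in some accidental configurations, prevent the emergence of an absolutely irreducible $\mathbb{F}_q$-rational component. I expect this to require a careful Newton-polygon / specialization analysis, performed separately in characteristic $2$, characteristic $3$, and characteristic $\geq 5$, in order to pin down the exact sporadic triples listed in case (4) and the restricted pairs of case (6). Once these are handled, the remaining cases follow by feeding the irreducibility obstructions back into the decomposition of $F$ above and reading off the divisibility conditions on $d_1$, $d_2$, and $h$.
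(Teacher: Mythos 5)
You have not proved the statement you were asked to prove. The statement is Lemma \ref{Lemma:Aubry}, quoted from \cite[Lemma 2.1]{Aubry}: a short, purely algebraic-geometric fact about two projective hypersurfaces $\mathcal{H},\mathcal{S}\subset\PG(n,q)$, asserting that a reduced absolutely irreducible $\mathbb{F}_q$-rational component of $\mathcal{H}\cap\mathcal{S}$ forces $\mathcal{S}$ itself to have an absolutely irreducible $\mathbb{F}_q$-rational component. What you wrote instead is a strategy sketch for the paper's Main Theorem: you invoke Theorem \ref{Th:Main}, decompose the polynomial $F$ of \eqref{Eq:F}, discuss the cases $j_2\leq j_1$, the parameter $h$, the sporadic triples, and the reduction to \cite{DanNico2021}. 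None of this bears on the lemma. Worse, the machinery you invoke (Theorem \ref{Th:Main} together with the hyperplane-section arguments of Propositions \ref{d1d2}, \ref{Prop:j_bar}, \ref{Prop:i_bar}) itself relies on Lemma \ref{Lemma:Aubry} to transfer irreducible components from hyperplane sections back to $\mathcal{S}_{f_1,f_2}$, so as a proof of the lemma your proposal is circular as well as off-target. Note the paper does not reprove the lemma either; it cites \cite{Aubry}.

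The missing idea is the Frobenius action on absolutely irreducible components, and the argument is a few lines. Let $\mathcal{V}$ be a reduced absolutely irreducible component of $\mathcal{H}\cap\mathcal{S}$ defined over $\mathbb{F}_q$, and let $\mathcal{W}$ be an absolutely irreducible component of $\mathcal{S}$ containing $\mathcal{V}$. Since $\mathcal{S}$ is defined over $\mathbb{F}_q$, the Frobenius map $\phi$ permutes the absolutely irreducible components of $\mathcal{S}$; since $\mathcal{V}$ is defined over $\mathbb{F}_q$, it is fixed by $\phi$, so $\mathcal{V}\subseteq\phi(\mathcal{W})$ as well. If $\phi(\mathcal{W})\neq\mathcal{W}$, then $\mathcal{V}$ lies in two distinct components of $\mathcal{S}$ and hence occurs with multiplicity at least $2$ in $\mathcal{H}\cap\mathcal{S}$, contradicting the hypothesis that $\mathcal{V}$ is a \emph{reduced} component (this is exactly where reducedness is needed, a hypothesis your sketch never touches). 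Therefore $\phi(\mathcal{W})=\mathcal{W}$, so $\mathcal{W}$ is an absolutely irreducible component of $\mathcal{S}$ defined over $\mathbb{F}_q$, as claimed.
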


\begin{lemma}\cite[Lemma 2.9]{DanYue2020}\label{Lemma:DanYue2020} Let $\mathcal{S}$ be a hypersurface containing $O=(0, 0,\ldots, 0)$ of  affine equation $F(X_1, ..., X_n) =0$, where 
$$
F(X_1,\ldots,X_n)=F_d(X_1,\ldots,X_n)+F_{d+1}(X_1,\ldots,X_n)+\cdots,
$$
with $F_i$ the homogeneous part of degree  $i$ of $F(X_1,\ldots,X_n)$ for $i =d, d +1,\dots$. Let $P$ be an $\mathbb{F}_q$-rational simple point of the variety $F_d(X_1,\ldots,X_n)=0$. Then there exists an $\mathbb{F}_q$-rational plane $\pi$ through the line $\ell$ joining $O$ and $P$ such that $\pi \cap \mathcal{S}$ has $\ell$ as a non-repeated tangent $\mathbb{F}_q$-rational line at the origin and $\pi \cap \mathcal{S}$ has a non-repeated absolutely irreducible $\mathbb{F}_q$-rational component.
\end{lemma}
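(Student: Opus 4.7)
The plan is to apply Theorem \ref{Th:Main} contrapositively: for every configuration of $(f_1,f_2)$ not covered by the listed cases I would exhibit an absolutely irreducible $\mathbb{F}_q$-rational component of $\mathcal{S}_{f_1,f_2}$, which under the hypothesis $q>6.3(d+1)^{13/3}$ forces $\mO_5(f_1,f_2)$ not to be an ovoid. The tactical recipe is to find a simple $\mathbb{F}_q$-rational point on the lowest-degree (or, after projectivising, the leading) homogeneous form of $F$ at a suitable base point of $\mathcal{S}_{f_1,f_2}$: Lemma \ref{Lemma:DanYue2020} then produces a plane section with a non-repeated absolutely irreducible $\mathbb{F}_q$-rational component, and Lemma \ref{Lemma:Aubry} lifts this component back to $\mathcal{S}_{f_1,f_2}$.

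The first step is to establish the dichotomy ``$d_1=d_2=1$ or $b_{i,j}=0$ for all $j\geq 1$''. Suppose some $b_{i_0,j_0}\neq 0$ with $j_0\geq 1$; the contribution of the highest-bidegree such monomial to
\[
(X_1-X_3)\bigl(\widetilde f_2(X_3,X_4,X_0)-\widetilde f_2(X_1,X_2,X_0)\bigr)
\]
introduces a genuine dependence on $X_2,X_4$ in the top form of $F$. Inspecting this leading form shows that, provided $(d_1,d_2)\neq(1,1)$, it factors with an absolutely irreducible $\mathbb{F}_q$-rational factor carrying a simple $\mathbb{F}_q$-rational point (chosen at a point where only the dominant monomial of the factor survives), giving the required component.

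With $f_2(X)=\sum_i b_i X^i$ and $g(X)=\sum_i a_i X^i$, writing $U=X_1-X_3$ and $V=X_2-X_4$, the polynomial $F$ becomes
\[
F=-U^2\,\Psi_2(X_0,X_1,X_3)-V^2 X_0^{d_1-1}-UV\,\Psi_1(X_0,X_1,X_3),
\]
where $\Psi_2$ and $\Psi_1$ are the divided differences of $\widetilde f_2$ and of $\widetilde g$ in their first variable. Viewed as a homogeneous quadratic form in $(U,V)$, the locus of $\mathcal{S}_{f_1,f_2}$ off the diagonal $\{U=V=0\}$ is controlled by the discriminant $\Delta:=\Psi_1^2-4X_0^{d_1-1}\Psi_2$ when $p$ is odd, and by the Artin--Schreier polynomial $T^2+\Psi_1 T+X_0^{d_1-1}\Psi_2$ when $p=2$. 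For each configuration of $(p,d_1,d_2,h)$ outside the listed exceptions I would exhibit an absolutely irreducible $\mathbb{F}_q$-rational factor of the relevant polynomial carrying a simple $\mathbb{F}_q$-rational point.

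The central difficulty, and the real bulk of the argument, is the characteristic-sensitive factorisation analysis of $\Delta$ steered by the parameter $h$. When $d_2>2d_1-1$, the $V^2 X_0^{d_1-1}$ term is of lower total degree and the leading behaviour of $F$ is governed by $\Psi_2$ alone, which over $\overline{\mathbb{F}_p}$ fails to be absolutely irreducible essentially only when $d_2=p^j$; combining this with $a_{d_1}=0$ and $h=\infty$ yields Case 5. When $d_2\leq 2d_1-1$ the quadratic in $(U,V)$ is genuinely nondegenerate at top degree: Case 1 ($p=2$) emerges from the Artin--Schreier obstruction, which forces $d_2$ to be odd and $1<d_1<d_2\leq 2d_1-1$; Cases 2 and 3 ($p=3$) correspond to the precise divisibility conditions on $d_1$ that make $\Delta$ acquire a perfect-cube factor obstructing the existence of a simple rational point on an $\mathbb{F}_q$-component; Cases 4 and 6 are sporadic small-parameter collisions where $\Delta$ decomposes into factors each failing the simple-rational-point criterion, verified by direct computation. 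In every remaining configuration I would explicitly produce a simple $\mathbb{F}_q$-rational point on an absolutely irreducible $\mathbb{F}_q$-factor of $\Delta$ (or its Artin--Schreier analogue) and conclude via Lemmas \ref{Lemma:DanYue2020}, \ref{Lemma:Aubry} and Theorem \ref{Th:Main}.
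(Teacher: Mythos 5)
Your proposal does not prove the statement it is attached to. The statement is Lemma \ref{Lemma:DanYue2020}, a local result about an arbitrary hypersurface $\mathcal{S}$ through the origin: given a simple $\mathbb{F}_q$-rational point $P$ of the tangent cone $F_d=0$, one must \emph{construct} an $\mathbb{F}_q$-rational plane $\pi$ through $\ell=OP$ such that the plane section $\pi\cap\mathcal{S}$ has $\ell$ as a non-repeated tangent at $O$ and possesses a non-repeated absolutely irreducible $\mathbb{F}_q$-rational component. What you wrote instead is a strategy sketch for the paper's Main Theorem (the classification of the ovoids $\mO_5(f_1,f_2)$): you invoke Theorem \ref{Th:Main}, Lemma \ref{Lemma:Aubry} and, crucially, Lemma \ref{Lemma:DanYue2020} \emph{itself} as black boxes, and then outline the case analysis on $(p,d_1,d_2,h)$ via the discriminant $\Delta=\Psi_1^2-4X_0^{d_1-1}\Psi_2$ and its Artin--Schreier analogue. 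Using the lemma as an ingredient in an argument for a different theorem is not a proof of the lemma: nothing in your text engages with the choice of $\pi$, with the tangent cone, or with why the plane section acquires an absolutely irreducible $\mathbb{F}_q$-rational component. (For calibration: the paper itself contains no internal proof of this statement either; it is imported verbatim from \cite[Lemma 2.9]{DanYue2020}, so a blind proof here must reconstruct that external argument.)

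The missing argument runs as follows. Since $F_d$ is homogeneous and $F_d(P)=0$, the entire line $\ell$ lies on the cone $F_d=0$, and simplicity of $P$ means $\nabla F_d(P)\neq 0$, with $\mathbb{F}_q$-rational tangent hyperplane $T_P\supseteq\ell$. Choose an $\mathbb{F}_q$-rational plane $\pi$ through $\ell$ with $\pi\not\subseteq T_P$; such a $\pi$ exists over $\mathbb{F}_q$ because $T_P$ is $\mathbb{F}_q$-rational. In affine coordinates $(u,v)$ on $\pi$ with $\ell: v=0$, the lowest-degree form of $F|_\pi$ is $F_d|_\pi$, and the transversality $\pi\not\subseteq T_P$ forces $v$ to divide $F_d|_\pi$ exactly once: the directional derivative of $F_d$ along a vector of $\pi$ off $\ell$ is nonzero at $P$. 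Hence $\ell$ is a non-repeated $\mathbb{F}_q$-rational tangent line of the curve $\pi\cap\mathcal{S}$ at $O$, so there is a \emph{unique} branch of $\pi\cap\mathcal{S}$ centred at $O$ and tangent to $\ell$. The $q$-Frobenius permutes the absolutely irreducible components of $\pi\cap\mathcal{S}$ but fixes this branch (uniqueness plus rationality of $O$ and $\ell$), hence fixes the unique absolutely irreducible component containing it, which is therefore defined over $\mathbb{F}_q$; and that component is non-repeated, since a repeated factor, or a second component through the same branch, would make $\ell$ a tangent of multiplicity at least two, contradicting non-repeatedness in $F_d|_\pi$. This tangent-cone-plus-Frobenius argument is the actual content of the lemma, and it is entirely absent from your proposal.
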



\section{General results}

In view of Theorem \ref{Th:Main}, our goal is to find conditions for which $\mathcal{S}_{f_1,f_2}$  defined by

$$(X_2-X_4)\left[\sum_{i,j} a_{ij}
\left(X_3^i X_4 ^j -X_1^iX_2^j\right)X_0^{d-i-j}\right] + (X_1-X_3) \left[\sum_{i,j} b_{ij}
\left(X_3^i X_4 ^j -X_1^iX_2^j\right)X_0^{d-i-j}\right]$$ possesses an absolutely irreducible component defined over $\mathbb{F}_q$.  Recall that $f_1(0,0)=0$ and $f_2(0,0)=0$, therefore  $a_{0,0}=0$ and $b_{0,0}=0$. 

\begin{remark}\label{remark:b_01}
Note that we can always assume that one between $a_{1,0}$ and $b_{0,1}$ is zero. In the following we will always assume $b_{0,1}=0$.
\end{remark}

\begin{proposition}\label{d1d2}

The hypersurface $\mathcal{S}_{f_1,f_2}$ contains an absolutely irreducible $\mathbb{F}_q$-rational component in the following cases:
\begin{enumerate}
    \item $d_1=d_2$, $a_{0,d}b_{d,0}=0$.

   \item $d=d_1>d_2$ and  there exists $i\neq 0$ such that $a_{i,d-i}\neq 0$.

\item $d=d_2>d_1$ and  there exists $i\neq d$ such that $b_{i,d-i}\neq 0$.
\end{enumerate}
\end{proposition}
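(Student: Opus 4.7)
The plan is to apply Lemma~\ref{Lemma:Aubry} with the hyperplane at infinity $\mathcal{H}_\infty=\{X_0=0\}$. The intersection $\mathcal{S}_{f_1,f_2}\cap\mathcal{H}_\infty$ is cut out in $\mathbb{P}^3$ by the top-degree form
$$F_\infty=(X_2-X_4)\big[h(X_3,X_4)-h(X_1,X_2)\big]+(X_1-X_3)\big[k(X_3,X_4)-k(X_1,X_2)\big],$$
where $h$ and $k$ denote the degree-$d$ homogeneous components of $f_1$ and $f_2$ respectively (so $h\equiv 0$ when $d_1<d$ and $k\equiv 0$ when $d_2<d$). It therefore suffices, in each of the three cases, to produce a reduced absolutely irreducible $\mathbb{F}_q$-rational component of $F_\infty=0$: Lemma~\ref{Lemma:Aubry} will then lift it to an absolutely irreducible $\mathbb{F}_q$-rational component of $\mathcal{S}_{f_1,f_2}$.

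For Cases~2 and 3 the top form is already factored. In Case~2 one has $F_\infty=(X_2-X_4)\big[h(X_3,X_4)-h(X_1,X_2)\big]$. The linear factor $X_2-X_4$ is absolutely irreducible and $\mathbb{F}_q$-rational, and I would verify that it is reduced by setting $X_2=X_4$ in the second bracket: this yields $h(X_3,X_4)-h(X_1,X_4)$, which vanishes identically if and only if $h(X,Y)$ depends only on $Y$, i.e.\ $a_{i,d-i}=0$ for every $i\neq 0$; the hypothesis of Case~2 excludes this. Case~3 is handled symmetrically, with $X_1-X_3$ playing the role of $X_2-X_4$ and $k$ in place of $h$.

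Case~1 is the main obstacle, since for $d_1=d_2=d$ both brackets of $F_\infty$ contribute and no linear factor is immediately visible. Exploiting the symmetry $f_1(X,Y)\leftrightarrow f_2(Y,X)$, which simultaneously exchanges the pairs $(a_{0,d},(X_2-X_4))$ and $(b_{d,0},(X_1-X_3))$, I would reduce without loss of generality to the subcase $a_{0,d}=0$; then $h=X\bar h$ for some homogeneous form $\bar h$ of degree $d-1$. Substituting this expression into $F_\infty$ and regrouping, I would proceed by case analysis according to the pattern of vanishing of the remaining coefficients of $h$ and $k$: in each subcase one either exhibits an explicit $\mathbb{F}_q$-linear (or low-degree) factor forced by $a_{0,d}=0$, or argues that $F_\infty$ itself is absolutely irreducible over $\mathbb{F}_q$.

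The difficulty in Case~1 is to simultaneously control the fields of definition and the multiplicities of the absolutely irreducible factors of $F_\infty$ over $\overline{\mathbb{F}_q}$, so as to guarantee that at least one such factor is $\mathbb{F}_q$-rational and appears with multiplicity one; in Cases~2 and 3 this reduces to checking a single non-vanishing condition on the leading form of $f_1$ or $f_2$.
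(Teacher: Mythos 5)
Your handling of Cases~2 and~3 is correct and coincides with the paper's own argument: cut with $H_\infty$, note that the top form is a product of a linear form and a bracket, verify non-repeatedness of the linear factor by substituting $X_2=X_4$ (resp.\ $X_1=X_3$), and invoke Lemma~\ref{Lemma:Aubry}. The problem is Case~1, which in your proposal is not a proof but a declaration of intent: after the (legitimate) symmetry reduction to $a_{0,d}=0$ you say you would ``proceed by case analysis'' and in each subcase ``either exhibit an explicit factor or argue that $F_\infty$ itself is absolutely irreducible,'' and you yourself identify controlling the fields of definition and multiplicities of the factors of $F_\infty$ as the difficulty. That difficulty is genuine and unresolved: $F_\infty$ is a form in four variables of arbitrary degree $d+1$ with arbitrary coefficient patterns, it has no visible factorization once both brackets contribute, and there is no elementary criterion available in this setting for proving absolute irreducibility of such a form directly. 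So Case~1 is a gap, not a sketchable routine verification.

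The idea you are missing is that Lemma~\ref{Lemma:Aubry} can be iterated, so one may cut a \emph{second} time with a coordinate hyperplane instead of wrestling with $F_\infty$ itself. With $a_{0,d}=0$, the paper intersects $\mathcal{S}'=\mathcal{S}_{f_1,f_2}\cap H_\infty$ with $X_3=0$, obtaining
\begin{equation*}
-(X_2-X_4)\sum_i a_{i,d-i}X_1^iX_2^{d-i}\;+\;X_1\Big(b_{0,d}X_4^d-\sum_i b_{i,d-i}X_1^iX_2^{d-i}\Big).
\end{equation*}
If $b_{0,d}=0$ this polynomial has degree $1$ in $X_4$, hence carries a non-repeated absolutely irreducible $\mathbb{F}_q$-rational component; if $b_{0,d}\neq 0$, then since $a_{0,d}=0$ every monomial of the first sum is divisible by $X_1$, so $X_1$ is a factor, and it is non-repeated because the quotient evaluated at $X_1=0$ still contains the monomial $b_{0,d}X_4^d$. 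Two applications of Lemma~\ref{Lemma:Aubry} then lift this component back to $\mathcal{S}_{f_1,f_2}$; the subcase $b_{d,0}=0$ is treated symmetrically via the section $X_2=0$. Without this second section, or some equally concrete device producing the component, your Case~1 does not go through.
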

\begin{proof}
Consider the  $\mathcal{S}^{\prime}:=\mathcal{S}_{f_1,f_2}\cap (X_0=0)$. 
\begin{enumerate}
    \item Suppose $d_1=d_2=d$. Consider $\mathcal{S}^{\prime\prime}=\mathcal{S}^{\prime}\cap (X_3=0)$ given by 

\begin{eqnarray*}
-(X_2 - X_4)\left[\sum_{i} a_{i,d-i}
 X_1^iX_2^{d-i}\right] -X_1 \left[\sum_{i} b_{i,d-i}
 X_1^iX_2^{d-i}\right]\\+a_{0,d}(X_2-X_4)X_4^d+b_{0,d}X_1X_4^d&=&0,
\end{eqnarray*}

which is of degree $1$ in $X_4$ if $a_{0,d}=b_{0,d}=0$.

If $a_{0,d}=0$ and $b_{0,d}\neq 0$ then $X_1$ is a non-repeated factor of $\mathcal{S}^{\prime\prime}$.

Consider $\mathcal{S}^{\prime\prime}=S^{\prime}\cap (X_2=0)$ given by 

\begin{eqnarray*}
- X_4\left[\sum_{i} a_{i,d-i}
 X_3^iX_4^{d-i}\right] +(X_1-X_3) \left[\sum_{i} b_{i,d-i}
 X_3^iX_4^{d-i}\right]\\ + a_{d,0}X_4 X_1^d-b_{d,0}(X_1-X_3)X_1^d &=& 0,
\end{eqnarray*}

which is of degree $1$ in $X_1$ if $a_{d,0}=b_{d,0}=0$.

If $b_{d,0}=0$ and $a_{d,0}\neq 0$ then $X_4$ is a non-repeated factor of $\mathcal{S}^{\prime\prime}$.

\item $d=d_1>d_2$. The equation of $\mathcal{S}^{\prime}$ reads 
$$(X_2-X_4)\sum_{i\leq d}a_{i,d-i}
\left(X_3^i X_4 ^{d-i} -X_1^iX_2^{d-i}\right)=0.$$

Now, $(X_2-X_4)\nmid G(X_1,X_2,X_3,X_4):=\sum_{i\leq d}a_{i,d-i}
\left(X_3^i X_4 ^{d-i} -X_1^iX_2^{d-i}\right)$ since there exists an element $a_{i,d-i}\neq 0$ with $i\neq 0$ and then $(X_2-X_4)$ is a non-repeated component of $\mathcal{S}^{\prime}$. By Lemma \ref{Lemma:Aubry}, $\mathcal{S}_{f_1,f_2}$ contains an absolutely irreducible component defined over $\mathbb{F}_{q}$ (passing through $H_{\infty}\cap (X_2=X_4))$.

\item $d=d_2>d_1$. Now, the equation of $\mathcal{S}^{\prime}$ reads 
$$(X_1-X_3)\sum_{i\leq d}b_{i,d-i}
\left(X_3^i X_4 ^{d-i} -X_1^iX_2^{d-i}\right)=0.$$

Now, $(X_1-X_3)\nmid G(X_1,X_2,X_3,X_4):=\sum_{i\leq d}b_{i,d-i}
\left(X_3^i X_4 ^{d-i} -X_1^iX_2^{d-i}\right)$ since there exists an element $b_{i,d-i}\neq 0$ with $i\neq d$ and then $(X_1-X_3)$ is a non-repeated component of $\mathcal{S}^{\prime}$. By Lemma \ref{Lemma:Aubry}, $\mathcal{S}_{f_1,f_2}$ contains an absolutely irreducible component defined over $\mathbb{F}_{q}$ (passing through $H_{\infty}\cap (X_1=X_3))$.

\end{enumerate}

\end{proof}

In what follows we denote by $j_1$ the non-negative integer $\max\{j : a_{i,j}\neq 0\}$ and by $j_2$ the non-negative integer $\max\{j : b_{i,j}\neq 0\}$ and  by $i_1$ the non-negative integer $\max\{i : a_{i,j}\neq 0\}$ and by $i_2$ the non-negative integer $\max\{i : b_{i,j}\neq 0\}$.

\begin{proposition}\label{Prop:j_bar}


If $(j_2,j_1)\neq (1,0)$ and $j_2 >j_1$ then $\mathcal{S}_{f_1,f_2}$ contains an absolutely irreducible component defined over $\mathbb{F}_q$.
\end{proposition}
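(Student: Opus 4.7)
The plan is to regard $F$ as a polynomial in $X_4$, extract its leading coefficient $L$, produce an absolutely irreducible $\mathbb{F}_q$-rational factor $L'$ of $L$ that is linear in $X_1$, and use Galois invariance to promote $L'$ to an absolutely irreducible $\mathbb{F}_q$-rational factor of $F$.

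The key computation is the $X_4$-leading coefficient. The hypothesis $j_2>j_1$ together with $(j_2,j_1)\neq(1,0)$ forces $j_2\geq 2$; under this condition, inspecting the four summands of $F$ shows that the coefficient of $X_4^{j_2}$ receives contributions only from $(X_1-X_3)\widetilde{f}_2(X_3,X_4,X_0)$ and, when $j_2=j_1+1$, from $-X_4\widetilde{f}_1(X_3,X_4,X_0)$; the potentially troublesome summand coming from $X_4\widetilde{f}_1(X_1,X_2,X_0)$ would contribute only if $j_2=1$, and this is precisely the excluded case. Consequently
\[
L(X_0,X_1,X_3) \;=\; (X_1-X_3)\,\Phi(X_0,X_3)\;-\;\varepsilon\,\Psi(X_0,X_3)
\]
lies in $\mathbb{F}_q[X_0,X_1,X_3]$, where $\Phi=\sum_i b_{i,j_2}X_3^iX_0^{d-i-j_2}$, $\Psi=\sum_i a_{i,j_1}X_3^iX_0^{d-i-j_1}$, and $\varepsilon=1$ iff $j_2=j_1+1$. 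By the definitions of $j_1,j_2$, the polynomial $\Phi$ is non-zero, as is $\Psi$ when $\varepsilon=1$.

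Setting $g=\gcd_{\mathbb{F}_q[X_0,X_3]}(\Phi,\varepsilon\Psi)$, one writes $L=g\cdot L'$ with
\[
L'(X_0,X_1,X_3) \;=\; \tfrac{\Phi}{g}\,X_1\;-\;\tfrac{X_3\Phi+\varepsilon\Psi}{g}.
\]
The $X_1$-coefficient and constant term of $L'$ are coprime in $\mathbb{F}_q[X_0,X_3]$, and this coprimality is preserved on passing to $\overline{\mathbb{F}}_q[X_0,X_3]$ (since a common irreducible factor over the closure would, by Galois averaging, yield one over $\mathbb{F}_q$). Thus $L'$ is a primitive polynomial of degree one in $X_1$ and is absolutely irreducible; since $g\in\mathbb{F}_q[X_0,X_3]$ has no factor involving $X_1$, $L'$ divides $L$ with multiplicity exactly one.

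Finally, decompose $F=F_1\cdots F_r$ into absolutely irreducible factors over $\overline{\mathbb{F}}_q$. The product of the leading $X_4$-coefficients of the $F_i$ equals $L$; since $L$ is free of $X_2$, multiplicativity of the $X_2$-degree forces each $F_i$'s leading $X_4$-coefficient to be free of $X_2$ as well. Because $L'$ is absolutely irreducible and divides $L$ exactly once, a unique factor $F_{i_0}$ has its leading $X_4$-coefficient divisible by $L'$. The absolute Galois group of $\mathbb{F}_q$ permutes the $F_i$'s and preserves the condition ``the leading $X_4$-coefficient is divisible by $L'$'' (as $L'\in \mathbb{F}_q[X_0,X_1,X_3]$), so the uniqueness of $F_{i_0}$ forces Galois to fix it; after a standard rescaling, $F_{i_0}$ is defined over $\mathbb{F}_q$, providing the required absolutely irreducible $\mathbb{F}_q$-rational component of $\mathcal{S}_{f_1,f_2}$. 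The main technical obstacle is showing that $L'$ is genuinely primitive and absolutely irreducible, and this is precisely the reason for excluding the case $(j_2,j_1)=(1,0)$, which would otherwise introduce a contribution of $\widetilde{f}_1(X_1,X_2,X_0)$ to $L$ and spoil its $X_2$-freeness.
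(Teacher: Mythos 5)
Your proof is correct, and it reaches the conclusion by a genuinely different mechanism than the paper's. The key object is the same: your leading $X_4$-coefficient $L$ is exactly the tangent cone $\Omega$ that the paper computes at the origin after swapping $X_0 \leftrightarrow X_4$ (the two coincide because $F$ is homogeneous, so the coefficient of the top power of $X_4$ is the lowest-degree homogeneous part of the swapped, dehomogenized polynomial), and your dichotomy via $\varepsilon$ matches the paper's cases (1iii)/(2iii) versus (2ii). The difference is the lifting step: the paper exhibits a non-singular point of $\Omega$, applies Lemma \ref{Lemma:DanYue2020} to cut a suitable plane section through the origin, and then uses Lemma \ref{Lemma:Aubry} twice to propagate absolute irreducibility up to $\mathcal{S}_{f_1,f_2}$; you instead extract the content $g$ from $L$, observe that $L'=L/g$ is primitive and linear in $X_1$, hence absolutely irreducible and dividing $L$ with multiplicity exactly one, and then combine multiplicativity of leading coefficients over the absolutely irreducible factorization of $F$ with Galois descent (the unique factor whose leading coefficient $L'$ divides must be Galois-stable, hence $\mathbb{F}_q$-rational after rescaling). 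Your route is more self-contained — it needs neither of the two cited lemmas nor the existence of smooth points on $\Omega$, and it treats both cases uniformly through the single factorization $L=gL'$ — while the paper's route reuses machinery (tangent cones plus plane sections) that serves verbatim in the twin Proposition \ref{Prop:i_bar} and in other parts of the paper. The only place where you are terse is the claim that coprimality of $\Phi/g$ and $(X_3\Phi+\varepsilon\Psi)/g$ persists over $\overline{\mathbb{F}}_q$; this is a standard fact (gcd's are insensitive to field extension), and your Galois-averaging sketch is a legitimate way to fill it, so it is not a gap.
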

\begin{proof}
The polynomial $F(X_4,X_1,X_2,X_3,X_0)$ reads
$$(X_2-X_0)\left[\sum_{i,j} a_{ij}
\left(X_3^i X_0 ^j -X_1^iX_2^j\right)X_4^{d-i-j}\right] + (X_1-X_3) \left[\sum_{i,j} b_{ij}
\left(X_3^i X_0 ^j -X_1^iX_2^j\right)X_4^{d-i-j}\right],$$and the surface $\widetilde{\mathcal{S}} : F(X_4,X_1,X_2,X_3,X_0)=0$ is projectively equivalent to $\mathcal{S}_{f_1,f_2}$.

We study now the tangent cone $\Omega$ of $\widetilde{\mathcal{S}}$ at the origin, whose equation depends on $j_1$ and $j_2$. We distinguish several cases.

\begin{enumerate}

\item $j_1=0$.
\begin{enumerate}
\item[(i)] $j_2=0$. $\Omega$ reads $\sum_ia_{i,0}(X_3^i-X_1^i)X_4^{d-i}$.

\item[(ii)] $j_2=1$. $\Omega$ reads $(X_1-X_3)\sum_ib_{i,1}X_3^iX_4^{d-i-1} + \sum_ia_{i,0}(X_1^i-X_3^i)X_4^{d-i}$.

\item[(iii)] $j_2 \geq 2$. $\Omega$ reads $(X_1-X_3)\sum_ib_{i,j_2}X_3^iX_4^{d-i-j_2}$. 
\end{enumerate}
\item $j_1 \geq 1$.
\begin{enumerate}
\item[(i)] $j_2 <j_1+1$. $\Omega$ reads $\sum_ia_{i,j_1}X_3^iX_4^{d-i-j_1}$.

\item[(ii)] $j_2=j_1+1$. $\Omega$ reads $\sum_ia_{i,j_1}X_3^iX_4^{d-i-j_1}- (X_1-X_3)\sum_ib_{i,j_2}X_3^iX_4^{d-i-j_2}$.

\item[(iii)] $j_2>j_1+1$. $\Omega$ reads $(X_1-X_3)\sum_ib_{i,j_2}X_3^iX_4^{d-i-j_2}$.
\end{enumerate}
\end{enumerate}

If cases (1iii) and (2iii) hold, then $X_1-X_3=0$ is a non-repeated $\mathbb{F}_q$-rational component of such a tangent cone. Also, there exists a non-singular point $P$ for $\Omega$.   By Lemma \ref{Lemma:DanYue2020} there exists an $\mathbb{F}_q$-rational plane $\pi$ through the line $\ell$ joining the origin  and $P$ such that $\pi\cap \widetilde{\mathcal{S}}$ has $\ell$ as a non-repeated tangent $\mathbb{F}_q$-rational line at the origin and $\pi\cap \widetilde{\mathcal{S}}$ has a non-repeated absolutely irreducible $\mathbb{F}_q$-rational component. Using twice Lemma \ref{Lemma:Aubry}, one deduces the existence of an absolutely irreducible $\mathbb{F}_q$-rational component in $\widetilde{\mathcal{S}}$ and therefore in $\mathcal{S}_{f_1,f_2}$.

 If case (2ii) holds, then $\Omega$ is a rational surface ($X_1$ has degree $1$). This means that $\Omega$ contains an absolutely irreducible $\mathbb{F}_q$-rational component of degree $1$ in $X_1$. It is readily seen that affine  singular points of $\Omega$ satisfy $\sum_i a_{i,j_1}X_3^iX_4^{d-i-j_1}=0$ and therefore there exists at least a   non-singular point $P$ for $\Omega$. The same conclusion as above holds. 

\end{proof}

\begin{proposition}\label{Prop:i_bar}


If $(i_1,i_2)\neq (1,0)$ and $i_1 > i_2$ then $\mathcal{S}_{f_1,f_2}$ contains an absolutely irreducible component defined over $\mathbb{F}_q$.
\end{proposition}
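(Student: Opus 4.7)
The plan is to deduce Proposition \ref{Prop:i_bar} from Proposition \ref{Prop:j_bar} by exploiting a symmetry of the construction $(f_1,f_2)\mapsto\mathcal{S}_{f_1,f_2}$ that simultaneously swaps the two variables $X\leftrightarrow Y$ inside each $f_i$ and exchanges $f_1\leftrightarrow f_2$. Under this transformation the ``max-degree-in-$X$'' of $f_1$ becomes the ``max-degree-in-$Y$'' of the new $f_2$, so the hypothesis $i_1>i_2$ is turned into $j_2>j_1$, which is precisely the situation already handled.

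First I would set $g_1(X,Y):=f_2(Y,X)$ and $g_2(X,Y):=f_1(Y,X)$, let $G$ be the defining polynomial of $\mathcal{S}_{g_1,g_2}$, and introduce the $\mathbb{F}_q$-rational coordinate permutation
$$\sigma:(X_0,X_1,X_2,X_3,X_4)\mapsto(X_0,X_2,X_1,X_4,X_3).$$
A direct expansion, using that homogenization commutes with substitution, shows $F\circ\sigma=G$: the factors $(X_1-X_3)$ and $(X_2-X_4)$ of $F$ are exchanged by $\sigma$, matching the interchange $f_1\leftrightarrow f_2$ in passing to $g_1,g_2$, while the inner arguments $(X_3,X_4,X_0)$ and $(X_1,X_2,X_0)$ of $\widetilde f_i$ become $(X_4,X_3,X_0)$ and $(X_2,X_1,X_0)$, matching the swap of variables inside each $f_i$. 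Hence $\mathcal{S}_{f_1,f_2}$ and $\mathcal{S}_{g_1,g_2}$ are projectively equivalent over $\mathbb{F}_q$, and any absolutely irreducible $\mathbb{F}_q$-rational component of one is pulled back under $\sigma$ to such a component of the other.

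Next I would translate the combinatorial hypothesis. Writing $\alpha_{mn},\beta_{mn}$ for the coefficients of $X^mY^n$ in $g_1,g_2$, one has $\alpha_{mn}=b_{nm}$ and $\beta_{mn}=a_{nm}$, so the indices transform as $(i_1(g),i_2(g),j_1(g),j_2(g))=(j_2(f),j_1(f),i_2(f),i_1(f))$. In particular, the hypothesis ``$(i_1,i_2)\neq(1,0)$ and $i_1>i_2$'' for $(f_1,f_2)$ is exactly the hypothesis ``$(j_2(g),j_1(g))\neq(1,0)$ and $j_2(g)>j_1(g)$'' for $(g_1,g_2)$. Proposition \ref{Prop:j_bar} applied to $\mathcal{S}_{g_1,g_2}$ therefore yields an absolutely irreducible $\mathbb{F}_q$-rational component, which $\sigma$ transports to $\mathcal{S}_{f_1,f_2}$.

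The main obstacle is essentially bookkeeping: carefully verifying the identity $F\circ\sigma=G$ and the index correspondence, together with compatibility with the normalization $b_{0,1}=0$ from Remark \ref{remark:b_01}. Under $\sigma$ this normalization becomes $a_{1,0}(g)=0$, which is precisely the alternative branch allowed by the same remark, so either Proposition \ref{Prop:j_bar} is insensitive to the branch chosen or one can re-apply the remark on the $g$-side. An alternative would be to replay the tangent-cone case analysis of Proposition \ref{Prop:j_bar} after substituting $X_0\leftrightarrow X_1$ (or $X_0\leftrightarrow X_3$) in place of $X_0\leftrightarrow X_4$, splitting into subcases on $i_1,i_2$ analogous to (1)(i)--(2)(iii); this works but merely duplicates the earlier argument, whereas the symmetry reduction is more economical.
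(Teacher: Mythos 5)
Your proposal is correct, but it follows a genuinely different route from the paper's. The paper proves Proposition \ref{Prop:i_bar} directly, by replaying the tangent-cone analysis of Proposition \ref{Prop:j_bar} on the projectively equivalent surface $F(X_3,X_1,X_2,X_0,X_4)=0$ (i.e.\ swapping $X_0\leftrightarrow X_3$ instead of $X_0\leftrightarrow X_4$) and splitting into cases indexed by $i_1,i_2$ --- precisely the ``alternative'' you describe and discard in your final paragraph. Your symmetry reduction is sound: the identity $F\circ\sigma=G$ does hold (homogenization to the common degree $d$ commutes with the swap $X\leftrightarrow Y$, and $\sigma$ exchanges the outer factors $X_1-X_3$ and $X_2-X_4$ exactly as the interchange $f_1\leftrightarrow f_2$ requires); the index dictionary $(i_1(g),i_2(g),j_1(g),j_2(g))=(j_2(f),j_1(f),i_2(f),i_1(f))$ is right; hence the hypothesis ``$(i_1,i_2)\neq(1,0)$ and $i_1>i_2$'' for $(f_1,f_2)$ becomes verbatim the hypothesis ``$(j_2,j_1)\neq(1,0)$ and $j_2>j_1$'' for $(g_1,g_2)$, and since $\sigma$ is an $\mathbb{F}_q$-rational coordinate permutation, absolutely irreducible $\mathbb{F}_q$-rational components transfer back to $\mathcal{S}_{f_1,f_2}$. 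The one point needing care (which you correctly flag) is the normalization of Remark \ref{remark:b_01}: under your dictionary $b_{0,1}(g)=a_{1,0}(f)$ need not vanish. This is harmless, because the proof of Proposition \ref{Prop:j_bar} never uses $b_{0,1}=0$: under its hypothesis the only surviving cases are (1iii), (2ii) and (2iii), in which only the coefficients $a_{i,j_1}$ and $b_{i,j_2}$ with $j_2\geq 2$ appear, so the proposition holds without any normalization. In terms of trade-offs, your reduction is more economical and makes the $(X\leftrightarrow Y,\ f_1\leftrightarrow f_2)$ duality of the construction explicit, at the small cost of checking that Proposition \ref{Prop:j_bar} is insensitive to the normalization convention; the paper's duplicated case analysis keeps each proposition self-contained and records the explicit tangent cones in the $i$-indexed cases.
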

\begin{proof}
The polynomial $F(X_3,X_1,X_2,X_0,X_4)$ reads
$$(X_2-X_4)\left[\sum_{i,j} a_{ij}
\left(X_0^i X_4 ^j -X_1^iX_2^j\right)X_3^{d-i-j}\right] + (X_1-X_0) \left[\sum_{i,j} b_{ij}
\left(X_0^i X_4 ^j -X_1^iX_2^j\right)X_3^{d-i-j}\right],$$
and the surface $\widetilde{\mathcal{S}} : F(X_3,X_1,X_2,X_0,X_4)=0$ is projectively equivalent to $\mathcal{S}_{f_1,f_2}$.

We study now the tangent cone $\Omega$ of $\widetilde{\mathcal{S}}$ at the origin, whose equation depends on $i_1$ and $i_2$. We distinguish several cases.

\begin{enumerate}
    \item $i_2=0$.
    \begin{itemize}
        \item[(i)] $i_1=0$. $\Omega$ reads $\sum_j b_{0,j}(X_4^j-X_2^j)X_3^{d-j}$.
        \item[(ii)] $i_1=1$. $\Omega$ reads $(X_2-X_4)\sum_j a_{1,j}X_4^jX_3^{d-j-1} + \sum_j b_{0,j}(X_2^j-X_4^j)X_3^{d-j}$.
        \item[(iii)] $i_1 \geq 2$. $\Omega$ reads $(X_2-X_4)\sum_j a_{i_1,j}X_4^jX_3^{d-j-i_1}$.
    \end{itemize}
    
    \item $i_2 \geq 1$.
    \begin{itemize}
        \item[(i)] $i_1 < i_2+1$. $\Omega$ reads $\sum_j b_{i_2,j}X_4^jX_3^{d-j-i_2}$.
        \item[(ii)] $i_1=i_2+1$. $\Omega$ reads $(X_2-X_4)\sum_j a_{i_1,j}X_4^jX_3^{d-j-i_1}-\sum_j b_{i_2,j}X_4^jX_3^{d-j-i_2}$.
        \item[(iii)] $i_1 > i_2 +1$. $\Omega$ reads $(X_2-X_4)\sum_j a_{i_1,j}X_4^jX_3^{d-j-i_1}$.
    \end{itemize}
\end{enumerate}

If cases (1iii) and (2iii) hold, then $X_2-X_4=0$ is a non-repeated $\mathbb{F}_q$-rational component of such a tangent cone. Also, there exists a non-singular point $P$ for $\Omega$.   By Lemma \ref{Lemma:DanYue2020} there exists an $\mathbb{F}_q$-rational plane $\pi$ through the line $\ell$ joining the origin  and $P$ such that $\pi\cap \widetilde{\mathcal{S}}$ has $\ell$ as a non-repeated tangent $\mathbb{F}_q$-rational line at the origin and $\pi\cap \widetilde{\mathcal{S}}$ has a non-repeated absolutely irreducible $\mathbb{F}_q$-rational component. Using twice Lemma \ref{Lemma:Aubry}, one deduces the existence of an absolutely irreducible $\mathbb{F}_q$-rational component in $\widetilde{\mathcal{S}}$ and therefore in $\mathcal{S}_{f_1,f_2}$.

If case (2ii) holds, then $\Omega$ is a rational surface ($X_2$ has degree $1$). This means that $\Omega$ contains an absolutely irreducible $\mathbb{F}_q$-rational component of degree $1$ in $X_2$. It is readily seen that affine  singular points of $\Omega$ satisfy $\sum_j b_{i_2,j}X_4^jX_3^{d-j-i_2}=0$ and therefore there exists at least a   non-singular point $P$ for $\Omega$. The same conclusion as above holds.

\end{proof}

\begin{theorem}
Let $f_1(X,Y)=\sum_{ij}a_{i,j}X^iY^j$ and $f_2(X,Y)=\sum_{ij}b_{i,j}X^iY^j$. Suppose that $q>6.3 (\max\{d_1,d_2\}+1)^{13/3}$, where $d_1=\deg(f_1), d_2=\deg(f_2)$. Let 
$$
i_1:= \max\{i : a_{i,j}\neq 0\}, 
i_2:= \max\{i : b_{i,j}\neq 0\}, 
j_1:= \max\{j : a_{i,j}\neq 0\}, 
j_2:= \max\{j : b_{i,j}\neq 0\}.
$$

If $\mO_5(f_1,f_2)$ is an ovoid of $\Q^+(5,q)$ then 
$b_{i,j}=0$ if $j\neq 0$ and one of the following holds:
\begin{enumerate}
\item $d_1=d_2$ and $a_{0,d}b_{d,0}\neq 0$;
\item $d_1>d_2$ and $a_{i,d-i}=0$ for all $i\neq 0$;
\item $d_1<d_2$ and $b_{i,d-i}=0$ for all $i\neq d$;
\item $(j_2,j_1)=(1,0)  $ or $j_2\leq j_1$;
\item $(i_1,i_2)=(1,0)  $ or $i_1\leq i_2$.

\end{enumerate}
\end{theorem}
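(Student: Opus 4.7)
The proof is a direct application of Theorem \ref{Th:Main} combined with the three structural propositions \ref{d1d2}, \ref{Prop:j_bar}, and \ref{Prop:i_bar}. The plan is to argue by contrapositive: assuming $\mO_5(f_1,f_2)$ is an ovoid under the hypothesis on $q$, Theorem \ref{Th:Main} forbids the hypersurface $\mathcal{S}_{f_1,f_2}$ from containing any absolutely irreducible $\mathbb{F}_q$-rational component, so every hypothesis under which the three propositions construct such a component must be violated.

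First I would apply Proposition \ref{d1d2}. Depending on the comparison between $d_1$ and $d_2$, one obtains one of the three mutually exclusive cases (1), (2), or (3) of the conclusion: when $d_1=d_2=d$ the two leading coefficients $a_{0,d}$ and $b_{d,0}$ must both be nonzero; when $d=d_1>d_2$ the only surviving top-degree term of $f_1$ is the pure $Y^d$ monomial; and symmetrically when $d=d_2>d_1$ the only surviving top-degree term of $f_2$ is the pure $X^d$ monomial. Next, Propositions \ref{Prop:j_bar} and \ref{Prop:i_bar} are immediately applied by negating their hypotheses, yielding items (4) and (5) on the maximal $Y$-exponents $(j_1,j_2)$ and on the maximal $X$-exponents $(i_1,i_2)$, respectively. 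These three reductions are essentially a bookkeeping step: each proposition says ``some Newton configuration produces an irreducible component'', and the ovoid property forbids that configuration.

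The last statement to verify, that $b_{i,j}=0$ for every $j\neq 0$, is the delicate one: it strengthens item (4) from ``$(j_2,j_1)=(1,0)$ or $j_2\leq j_1$'' to the sharper conclusion $j_2=0$. The natural approach is to combine item (4) with the normalization $b_{0,1}=0$ from Remark \ref{remark:b_01} and with item (5). If $(j_2,j_1)=(1,0)$, then $f_1$ depends only on $X$ and some $b_{i,1}\neq 0$ with $i\geq 1$, forcing $i_2\geq 1$; one then matches the resulting indices against $(i_1,i_2)$ using (5) and against the leading-coefficient constraints from (1)-(3), in order to reach a contradiction or to reduce to a configuration already covered by Proposition \ref{Prop:j_bar} after a coordinate swap exchanging the roles of the $Y$- and $X$-variables.

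The main obstacle I expect is precisely this last step: cleanly excluding the border case $(j_2,j_1)=(1,0)$ does not follow formally from (4) and (5) alone, and I anticipate needing to redo the tangent-cone / hyperplane-at-infinity analysis of Propositions \ref{Prop:j_bar}--\ref{Prop:i_bar} one notch further, treating the terms $b_{i,1}XY^i$ (and the interaction with the forced leading monomials from (1)-(3)) so that Lemmas \ref{Lemma:Aubry} and \ref{Lemma:DanYue2020} again produce an absolutely irreducible $\mathbb{F}_q$-rational component of $\mathcal{S}_{f_1,f_2}$. Once this refined step is in hand, the theorem follows by simply collecting the five consequences and invoking Theorem \ref{Th:Main} one final time.
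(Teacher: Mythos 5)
Your reductions for items (1)--(5) are exactly right and coincide with what the paper itself (implicitly) does: Theorem \ref{Th:Main} turns the ovoid hypothesis plus the bound on $q$ into the non-existence of an absolutely irreducible $\mathbb{F}_q$-rational component of $\mathcal{S}_{f_1,f_2}$, and negating the hypotheses of Propositions \ref{d1d2}, \ref{Prop:j_bar} and \ref{Prop:i_bar} yields the degree-matched item among (1)--(3) together with (4) and (5) (in fact their conjunction, which is stronger than the stated ``one of the following holds''). The gap you flag is genuine, and it is the right one to flag: the clause ``$b_{i,j}=0$ if $j\neq 0$'', i.e.\ $j_2=0$, is not a formal consequence of (1)--(5) even after the normalization $b_{0,1}=0$ of Remark \ref{remark:b_01}. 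For instance, $f_1=Y^2+XY$, $f_2=X^2+XY$ satisfies (1), (4) and (5) (here $j_1=2$, $j_2=1$, $i_1=1$, $i_2=2$) while $b_{1,1}\neq 0$; so no chain of applications of the Section 3 propositions can ever force $j_2=0$, and your proposal, which only sketches how the remaining configurations might be excluded, is not a complete proof of the statement as written.

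Two remarks to calibrate this. First, the paper gives no proof of this theorem at all: it is stated bare at the end of Section 3, and the clause $b_{i,j}=0$ for $j\neq 0$ is actually established only later, in Propositions \ref{Prop_p2_1} and \ref{Proposition:6.1} (which intersect $\mathcal{S}_{f_1,f_2}$ with $X_2=X_4$ and exhibit a component of degree one in $X_2$), and there only under the additional hypothesis $f_1(X,Y)=Y+\sum_i a_iX^i$ of Section 4 together with $d_2>d_1$ --- essentially the refined analysis you anticipate needing. So your diagnosis agrees with how the paper really proceeds; for general $f_1$ the clause is proved nowhere in the paper. Second, the border case $(j_2,j_1)=(1,0)$ that worries you can be discarded with no algebraic geometry at all: $j_1=0$ means $f_1$ depends only on $X$, and then \eqref{Eq:intro} vanishes for any two pairs $(x,y_1)\neq(x,y_2)$, so $\mO_5(f_1,f_2)$ is never an ovoid; symmetrically $(i_1,i_2)=(1,0)$ is impossible, since $i_2=0$ makes \eqref{Eq:intro} vanish for $(x_1,y)\neq(x_2,y)$. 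This cleans the border cases out of (4) and (5), but it leaves configurations with $1\leq j_2\leq j_1$ untouched, so it does not close the gap either.
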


\section{Specific $f_1$}
In what follows, $f_1(X,Y)= Y + \sum_{i}a_{i}X^i, f_2(X,Y)=\sum_{i,j}b_{i,j}X^iY^j$ with $a_{i},b_{i,j}\in \mathbb{F}_q$ for all $i,j$, $d_1=\max\{1,i : a_i \neq0\}$ and $d_2=\max\{i+j: b_{i,j}\neq0\}$. So,

\begin{equation}\label{Eq:S_f}
\begin{aligned}
\mathcal{S}_{f_1,f_2}&: F(X_0,X_1,X_2,X_3,X_4)  :=\hspace{-0.3 cm} & (X_2 - X_4)^2 X_0^{d-1} - (X_2-X_4)\left[\sum_{i} a_{i}
\left(X_3^i  -X_1^i\right)X_0^{d-i}\right] -\\ &&(X_1-X_3) \left[\sum_{i,j} b_{ij}
\left(X_3^i X_4 ^j -X_1^iX_2^j\right)X_0^{d-i-j}\right]=0. 
\end{aligned}
\end{equation}
Let $\Gamma:=\{i:b_{i,0}\neq 0, 0<i<d_2 \}$ and denote 
$$h:= \begin{cases}
\max (\Gamma)& \textrm{ if } \Gamma\neq \emptyset;\\ 
\infty & \textrm{ if } \Gamma= \emptyset.\\ 
\end{cases}
$$

\begin{remark}\label{remark:4.1}
Let $d_1>1$. If $\mathcal{S}_{f_1,f_2}$ does not contain an absolutely irreducible component defined over $\mathbb{F}_q$, then by Proposition \ref{d1d2}, $d_2\geq d_1$. Also, if $d_2> d_1$ then $j_2 \leq j_1$ by Proposition \ref{Prop:j_bar}. If $d_2=d_1>1$ then $a_{0,d}=0$ and so by Proposition \ref{d1d2}, $\mathcal{S}_{f_1,f_2}$ contains an absolutely irreducible component defined over $\mathbb{F}_q$.
So we have to deal with the following two cases
\begin{itemize}
    \item $d_1<d_2$ and $(j_1,j_2)\in \{(1,0),(1,1)\}$;
    \item $d=d_1=d_2=1$.
\end{itemize}
\end{remark}

\begin{remark}\label{remark:fattorizzazione}
Let 
$$F(X_1,\ldots,X_n,Y)=Y^2+Yf(X_1,\ldots,X_n)+g(X_1,\ldots,X_n)\in \mathbb{F}_{p^\ell}[X_1,\ldots,X_n],$$ $p$ odd. Then $F$ is reducible if and only if there exists $r,s\in \overline{\mathbb{F}_{p^{\ell}}}[X_1,\ldots,X_n]$ and $\epsilon \in \overline{\mathbb{F}_ {p^{\ell}}}^*$ such that $f=r+s$, $g=rs$. In fact, from 
$$Y^2+Yf(X_1,\ldots,X_n)+g(X_1,\ldots,X_n)=\Big( Y+\frac{\alpha_1(X_1,\ldots,X_n)}{\alpha_2(X_1,\ldots,X_n)}\Big)\Big( Y+\frac{\beta_1(X_1,\ldots,X_n)}{\beta_2(X_1,\ldots,X_n)}\Big),$$
with $\alpha_1,\alpha_2,\beta_1,\beta_2\in \overline{\mathbb{F}_{p^\ell}}(X_1,\ldots,X_n)$ and $\gcd(\alpha_1,\alpha_2)=\gcd(\beta_1,\beta_2)=1$, one deduces $f=\alpha_1/\alpha_2+\beta_1/\beta_2$, $g=(\alpha_1\beta_1)/(\alpha_2\beta_2)$. So $\beta_1=\gamma_1\alpha_2$ and $\alpha_1=\gamma_2\beta_2$ for some $\gamma_1,\gamma_2\in \overline{\mathbb{F}_{p^\ell}}[X_1,\ldots,X_n]$. Therefore 
$$f=\frac{\alpha_1\beta_2+\beta_1\alpha_2}{\alpha_2\beta_2}= \frac{\gamma_2\beta_2^2+\gamma_1\alpha_2^2}{\alpha_2\beta_2}$$
 and thus $\beta_2\mid \gamma_1\alpha_2^2$ and  $\alpha_2\mid \gamma_2\beta_2^2$. If $\alpha_2$ or $\beta_2$ were non-constant then they would have common factors with $\gamma_1\alpha_2=\beta_1$ or $\gamma_2\beta_2=\alpha_1$, a contradiction.  The claim follows. 

\end{remark}

\section{$p=2$}



\begin{proposition}\label{Prop_p2_1}
Let $\mathcal{S}_{f_1,f_2}$ be as in \eqref{Eq:S_f}, $d=d_2 > d_1$. If $j_2=1$  then  $\mathcal{S}_{f_1,f_2}$ contains an absolutely irreducible component defined over $\mathbb{F}_q$.
\end{proposition}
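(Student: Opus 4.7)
The strategy is to apply the $\mathbb{F}_q$-linear change of coordinates $V:=X_1+X_3$ and $U:=X_2+X_4$ (so $X_3=X_1+V$ and $X_4=X_2+U$), which preserves both absolute irreducibility and $\mathbb{F}_q$-rationality. The crucial use of $p=2$ is that the ``quadratic-in-$X_2$'' summand in \eqref{Eq:S_f} satisfies $(X_2+X_4)^2X_0^{d-1}=(X_2^2+X_4^2)X_0^{d-1}=U^2 X_0^{d-1}$ in the new coordinates, eliminating the $X_2^2$ contribution entirely. The $a$-summand $(X_2+X_4)\sum_i a_i(X_3^i+X_1^i)X_0^{d-i}$ becomes $U\cdot A(X_0,X_1,V)$, and the $j=0$ piece of the $b$-summand becomes $V\cdot B_0(X_0,X_1,V)$; neither involves $X_2$. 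The only remaining $X_2$-dependence arises from $(X_1+X_3)\sum_i b_{i,1}(X_3^i X_4+X_1^i X_2)X_0^{d-i-1}$. After the substitution and using that $(X_1+V)^i+X_1^i$ is divisible by $V$, I expect to separate this contribution cleanly as $V^2\tilde Q(X_0,X_1,V)\cdot X_2$ plus an $X_2$-independent remainder, where
$$\tilde Q(X_0,X_1,V):=\sum_{i\ge 1} b_{i,1}\cdot\frac{(X_1+V)^i+X_1^i}{V}\cdot X_0^{d-i-1}.$$
Collecting all terms, $F$ has $X_2$-degree exactly $1$ in the new coordinates, with leading coefficient $V^2\tilde Q\in\mathbb{F}_q[X_0,X_1,V]$.

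The next step is to verify $V^2\tilde Q\not\equiv 0$. By Remark \ref{remark:b_01} we may assume $b_{0,1}=0$, so the hypothesis $j_2=1$ forces some $i_0\ge 1$ with $b_{i_0,1}\neq 0$. For any such $i_0$, the polynomial $((X_1+V)^{i_0}+X_1^{i_0})/V\in\mathbb{F}_q[X_1,V]$ is nonzero: in characteristic $2$, expanding $(X_1+V)^{i_0}$ and cancelling the $k=0$ term against $X_1^{i_0}$ yields $\sum_{k\ge 1}\binom{i_0}{k}X_1^{i_0-k}V^{k-1}$, in which the $k=i_0$ summand contributes $V^{i_0-1}$ with coefficient $\binom{i_0}{i_0}=1$. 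Since distinct values of $i$ in the sum defining $\tilde Q$ give summands with distinct $X_0$-degrees, no cross-cancellation is possible, so $\tilde Q\neq 0$.

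The conclusion will then follow from the elementary observation that any polynomial $F=a(Y_1,\ldots,Y_n)\cdot X_2+b(Y_1,\ldots,Y_n)\in\mathbb{F}_q[Y_1,\ldots,Y_n,X_2]$ with $a\neq 0$ factors as $F=D\cdot F_0$, where $D=\gcd(a,b)\in\mathbb{F}_q[Y_1,\ldots,Y_n]$ and $F_0$ is linear in $X_2$ with coprime coefficients. Any such $F_0$ is absolutely irreducible: a non-trivial factorization in $\overline{\mathbb{F}_q}[Y_1,\ldots,Y_n,X_2]$ would force a common factor of its two coefficients. Moreover $F_0$ is defined over $\mathbb{F}_q$, so its vanishing locus is the desired absolutely irreducible $\mathbb{F}_q$-rational component of $\mathcal{S}_{f_1,f_2}$.

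I expect the main technical step to be the careful algebraic bookkeeping in the first paragraph, specifically verifying that all $X_2$-dependence of $F$ in the new coordinates collects into the single coefficient $V^2\tilde Q$. A pleasant aspect of this approach is that the conclusion is uniform in $d$ and does not distinguish cases based on $b_{d-1,1}$ or on the $2$-adic valuation of $d$, which would otherwise arise from a direct argument using the hyperplane section $F|_{X_0=0}$ combined with Lemma \ref{Lemma:Aubry}.
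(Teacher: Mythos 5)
Your proof is correct, but it takes a genuinely different route from the paper's. The paper intersects $\mathcal{S}_{f_1,f_2}$ with the hyperplane $X_2=X_4$: both summands carrying the factor $(X_2-X_4)$ vanish on this section, the residual polynomial $(X_1+X_3)\bigl(X_2\sum_i b_{i,1}(X_1^i+X_3^i)X_0^{d-i-1}+\sum_i b_{i,0}(X_1^i+X_3^i)X_0^{d-i}\bigr)$ has degree $1$ in $X_2$, hence the section has a non-repeated absolutely irreducible $\mathbb{F}_q$-rational component, and Lemma \ref{Lemma:Aubry} lifts it to a component of $\mathcal{S}_{f_1,f_2}$ itself. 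You instead perform a global linear change of coordinates after which $F$ itself becomes linear in $X_2$, and then factor directly; this dispenses with Lemma \ref{Lemma:Aubry} altogether (in particular with its reducedness hypothesis), and it treats a possible $b_{d-1,1}\neq 0$ uniformly, whereas the paper's displayed sum over $0<i\leq d-2$ tacitly assumes $b_{d-1,1}=0$ (which is justified in the paper's framework via Proposition \ref{d1d2}). Two small remarks. First, characteristic $2$ is not actually what makes your computation work: taking $U=X_2-X_4$ and $V=X_1-X_3$ in odd characteristic kills the $X_2^2$ term just as well and yields the identical conclusion; this is consistent with the paper proving the odd-characteristic analogue (Proposition \ref{Proposition:6.1}) by the same section argument, so your method is, if anything, more general than you claim. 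Second, your final step silently uses that coprimality of the two coefficients $a_0,b_0$ over $\mathbb{F}_q$ persists over $\overline{\mathbb{F}_q}$; this is the standard fact that multivariate gcds are stable under field extension, but it deserves a sentence.
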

\begin{proof}

Let $\mathcal{W}=\mathcal{S}_{f_1,f_2} \cap (X_2=X_4)$ be defined by 
\begin{eqnarray*}
&& (X_1+X_3) \left[\sum_{i,j} b_{ij}X_2^jX_0^{d-i-j}
\left(X_1^i  +X_3^i\right)\right]\\
&=&(X_1+X_3) \left( X_2 \sum_{0< i\leq d-2} b_{i,1}(X_1^i  +X_3^i)X_0^{d-i-1}+ \sum_{0< i \leq d} b_{i,0}(X_1^i  +X_3^i)X_0^{d-i}\right).
\end{eqnarray*}
If some of the $b_{i,1}$'s, $0<i\leq d-2$, is nonzero then  $\mathcal{W}$ contains a rational component of degree $1$ in $X_2$ and therefore $\mathcal{S}_{f_1,f_2}$ contains an absolutely irreducible component defined over $\mathbb{F}_q$ through it. 
\end{proof}

\begin{proposition}\label{Prop:p_even_d1=1}
Let $\mathcal{S}_{f_1,f_2}$ be as in \eqref{Eq:S_f}, $d=d_2 > d_1$.  If $d_1= 1$  then  $\mathcal{S}_{f_1,f_2}$ contains an absolutely irreducible component defined over $\mathbb{F}_q$.
\end{proposition}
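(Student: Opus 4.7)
The plan is to reduce the conclusion to the absolute irreducibility of the polynomial $F$ defining $\mathcal{S}_{f_1,f_2}$, exploiting the fact that $d_1=1$ forces $f_1(X,Y)=Y+a_1X$ for some $a_1\in\mathbb{F}_q$.

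First, by Remark~\ref{remark:4.1}, since $d=d_2>d_1=1$ we have $(j_1,j_2)\in\{(1,0),(1,1)\}$. In the case $(j_1,j_2)=(1,1)$ either some $b_{i,1}$ with $0<i\leq d-2$ is nonzero, so Proposition~\ref{Prop_p2_1} applies directly, or the only nonzero $b_{i,1}$ is $b_{d-1,1}$ (recall $b_{0,1}=0$ by Remark~\ref{remark:b_01}); in the latter subcase we invoke Proposition~\ref{d1d2}(3), which applies since $b_{d-1,1}=b_{d-1,d-(d-1)}$ is nonzero with $d-1\neq d$. Hence only $(j_1,j_2)=(1,0)$ remains, in which $f_2(X,Y)=\sum_{i}b_iX^i$ with $b_d\neq 0$.

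For this remaining case I set $u=X_1+X_3$, $v=X_2+X_4$ and $R(X_1,X_3)=\sum_i b_iP_i(X_1,X_3)$ with $P_i=\sum_{k=0}^{i-1}X_1^{i-1-k}X_3^k$; the affine equation ($X_0=1$) then reads $F=v^2+a_1uv+u^2R$. Viewed as a quadratic in $v$ over the UFD $\overline{\mathbb{F}_q}[X_1,X_3]$, any factorization takes the form $(v+\alpha)(v+\beta)$ with $\alpha+\beta=a_1u$ and $\alpha\beta=u^2R$. Since $u$ is irreducible and $u\mid\alpha\beta$, $u$ divides one of $\alpha,\beta$; the relation $\beta=a_1u+\alpha$ (characteristic $2$) then forces $u$ to divide the other as well. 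Writing $\alpha=u\gamma$ one obtains $\beta=u(\gamma+a_1)$ and $R=\gamma^2+a_1\gamma$ for some $\gamma\in\overline{\mathbb{F}_q}[X_1,X_3]$.

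The crux, which I expect to be the only step with real content, is ruling out the existence of such a $\gamma$. If $d$ is even then $\deg R=d-1$ is odd while $\deg(\gamma^2+a_1\gamma)=2\deg\gamma$ is even, a contradiction. If $d$ is odd (so $d\geq 3$), necessarily $\deg\gamma=(d-1)/2$, and since $\deg(a_1\gamma)<d-1$ the degree-$(d-1)$ homogeneous part of $\gamma^2+a_1\gamma$ coincides with $\gamma_0^2$, where $\gamma_0$ is the top homogeneous part of $\gamma$; in characteristic $2$ this square contains only monomials in which each variable occurs with an even exponent. On the other hand the degree-$(d-1)$ part of $R$ equals $b_dP_d=b_d\sum_{k=0}^{d-1}X_1^{d-1-k}X_3^k$, and since $d-1$ is even the $k=1$ monomial $b_dX_1^{d-2}X_3$ has both exponents odd and cannot appear in $\gamma_0^2$. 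This parity obstruction supplies the contradiction, so $F$ is absolutely irreducible over $\mathbb{F}_q$ and its homogenization is itself the required absolutely irreducible $\mathbb{F}_q$-rational component of $\mathcal{S}_{f_1,f_2}$.
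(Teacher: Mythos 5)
Your proof is correct, and its key step takes a genuinely different route from the paper's. The reduction to $(j_1,j_2)=(1,0)$ matches the paper's (which simply invokes Proposition \ref{Prop_p2_1}); in fact you are more careful, since the proof of Proposition \ref{Prop_p2_1} only treats $b_{i,1}\neq 0$ with $0<i\leq d-2$, and you dispose of the leftover case $b_{d-1,1}\neq 0$ via Proposition \ref{d1d2}(3). (One small citation quibble: Remark \ref{remark:4.1} is stated under the hypothesis $d_1>1$; for $d_1=1$ you should invoke Proposition \ref{Prop:j_bar} directly, which with $j_1=1$ rules out $j_2\geq 2$ --- the substance is unchanged.) For the main step, the paper splits into $a_1=0$, where the surface is rational in $Z=(X_2+X_4)^2$, and $a_1\neq 0$, where it performs the substitution $Z=(X_2+X_4)/\left(a_1(X_1+X_3)\right)$ to reach an Artin--Schreier-type equation $Z^2+Z+g$ and proves absolute irreducibility by a degree comparison forcing the top homogeneous term $L_d=(X_1^d+X_3^d)/(X_1+X_3)$ to be a square, which is excluded by computing the exact power of $(X_1+X_3)$ dividing $L_d$ after writing $d=2^jk$ with $k$ odd. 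You instead stay entirely at the polynomial level: monicity in $v$ over the UFD $\overline{\mathbb{F}_q}[X_1,X_3]$ gives the factorization $(v+u\gamma)\bigl(v+u(\gamma+a_1)\bigr)$, reducing everything to the non-existence of $\gamma$ with $\gamma^2+a_1\gamma=R$, which you rule out by the Frobenius parity obstruction: in characteristic $2$ the square $\gamma_0^2$ lies in $\overline{\mathbb{F}_q}[X_1^2,X_3^2]$, while the top part $b_dP_d$ of $R$ either has odd degree ($d$ even) or contains the monomial $X_1^{d-2}X_3$ with both exponents odd ($d$ odd). Your route buys uniformity --- no case split on $a_1$, no rational substitution, no $2$-adic decomposition of $d$ --- while the paper's normal form $Z^2+Z+g$ matches the function-field viewpoint it uses elsewhere; both arguments ultimately rest on the same core fact, namely that $(X_1^d+X_3^d)/(X_1+X_3)$ is not a square in $\overline{\mathbb{F}_q}[X_1,X_3]$.
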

\proof

By Proposition \ref{Prop_p2_1}, $j_2=0$ and therefore  $F(X_0,X_1,X_2,X_3,X_4)$ reads
\begin{eqnarray*}
(X_2 +X_4)^2X_0^{d-1} + a_1(X_1+X_3)(X_2+X_4)X_0^{d-1} + (X_1+X_3)\sum_{0<i\leq d}  b_{i,0} \left(X_1^i  +X_3^i\right)X_0^{d-i}.
\end{eqnarray*}

If $a_1 = 0$ then $\mathcal{S}_{f_1,f_2}$ is clearly a rational surface (in $Z=(X_2+X_4)^2$) and therefore it contains an absolutely irreducible component defined over $\mathbb{F}_q$.

Suppose now $a_1 \neq 0$ and consider the affine part of $\mathcal{S}_{f_1,f_2}:F(1,X_1,X_2,X_3,X_4)=0.$ It can be written as 

\begin{equation}\label{Eq:Y}
Z^2+Z+\frac{\sum_{0<i\leq d}  b_{i,0}
\left(X_1^i  +X_3^i\right)}{a_1^2(X_1+X_3)},
\end{equation}

where $Z=(X_2+X_4)/\left(a_1(X_1+X_3)\right).$
In what follows we will prove that the surface $\mathcal{Y}$ defined by the affine equation \eqref{Eq:Y} is absolutely irreducible. 

Note that  the surface $\mathcal{Y}$ is reducible only if there exists  $\overline{Z}(X_1,X_3)=\frac{\overline{Z}_1(X_1,X_3)}{\overline{Z}_2(X_1,X_3)}$, with $\overline{Z}_i(X_1,X_3)\in \overline{\mathbb{F}_q}[X_1,X_3]$, such that $(\overline{Z}(X_1,X_3))^2+\overline{Z}(X_1,X_3)+\frac{\sum_{0<i\leq d}  b_{i,0}
\left(X_1^i  +X_3^i\right)}{a_{1}^2(X_1+X_3)}=0$. This condition reads 
\begin{equation}\label{Eq:Z}
(\overline{Z}_1(X_1,X_3))^2+\overline{Z}_1(X_1,X_3)\overline{Z}_2(X_1,X_3)+(\overline{Z}_2(X_1,X_3))^2\frac{\sum_{0<i\leq d}  b_{i,0}
\left(X_1^i  +X_3^i\right)}{a_{1}^2(X_1+X_3)}=0.
\end{equation}
Now, $\deg(\overline{Z}_1)>\deg(\overline{Z}_2)$, otherwise  $2\deg(\overline{Z}_1)\leq \deg(\overline{Z}_1)+\deg(\overline{Z}_2)<2\deg(\overline{Z}_2)+d-1$ and \eqref{Eq:Z} cannot be satisfied. 

Also, the same argument shows that $2\deg(\overline{Z}_1)=2\deg(\overline{Z}_2)+d-1$ and therefore the highest homogeneous parts in $(\overline{Z}_1(X_1,X_3))^2$ and in $(\overline{Z}_2(X_1,X_3))^2\frac{\sum_{0<i\leq d}  b_{i,0}
\left(X_1^i  +X_3^i\right)}{a_{1}^2(X_1+X_3)}$ must be equal.

Write  $d=2^jk$, with $j\geq0$ and $k$ odd. The above argument yields that, in particular,
$$L_d(X_1,X_3)=\frac{\left(X_1^d  +X_3^d\right)}{(X_1+X_3)}=\frac{\left(X_1^k  +X_3^k\right)^{2^j}}{(X_1+X_3)},$$
the highest homogenous term in $\frac{\sum_{0<i\leq d}  b_{i,0}
\left(X_1^i  +X_3^i\right)}{a_{1}^2(X_1+X_3)}$,
is a square in  $\overline{\mathbb{F}_q}[X_1,X_3]$. 
\begin{itemize}
    \item If $j>0$ then $(X_1+X_3)^{2^j-1}$ is the highest power of $(X_1+X_3)$ dividing $L_d(X_1,X_3)$. Since  $2^j-1$ is odd,    $L_d(X_1,X_3)$ is not a square in $\overline{\mathbb{F}}_q[X_1,X_3]$ and the surface $\mathcal{Y}$ is irreducible. 
\item If  $j=0$, following the same argument as above, if $\mathcal{Y}$ is reducible then $k=1$, a contradiction to $d>1$.
\end{itemize}
This shows that if $d=d_2>d_1=1$ then $\mathcal{Y}$ and therefore $\mathcal{S}_{f_1,f_2}$ is absolutely irreducible or contains an absolutely irreducible component defined over $\mathbb{F}_q$.
\endproof


\begin{proposition}\label{Prop:p_even_j2=0}
Let $\mathcal{S}_{f_1,f_2}$ be as in \eqref{Eq:S_f}, $d=d_2 > d_1>1$, $(j_1,j_2)= (1,0)$.  If $d_2+1 > 2d_1$
then  $\mathcal{S}_{f_1,f_2}$ contains an absolutely irreducible component defined over $\mathbb{F}_q$.
\end{proposition}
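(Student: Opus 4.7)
The plan is to prove that the polynomial $F$ defining $\mathcal{S}_{f_1,f_2}$ is itself absolutely irreducible, so that $\mathcal{S}_{f_1,f_2}$ supplies the required absolutely irreducible $\mathbb{F}_q$-rational component. Specializing to $X_0=1$ and using $p=2$ together with $(j_1,j_2)=(1,0)$, formula \eqref{Eq:S_f} collapses into a monic quadratic in $U:=X_2+X_4$,
\[
F = U^2 + U\,A(X_1,X_3) + B(X_1,X_3),
\]
where $A(X_1,X_3)=\sum_i a_i(X_1^i+X_3^i)$ has total degree $d_1$ and $B(X_1,X_3)=(X_1+X_3)\sum_i b_{i,0}(X_1^i+X_3^i)$ has total degree $d_2+1$. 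Because $X_2$ and $X_4$ only appear through $U$, the invertible change of generators $(X_2,X_4)\leftrightarrow(U,X_4)$ on $\overline{\mathbb{F}_q}[X_1,X_2,X_3,X_4]$ shows that $F$ is absolutely irreducible iff $U^2+UA+B$ is absolutely irreducible in $\overline{\mathbb{F}_q}[X_1,X_3,U]$.

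Over the function field $K=\overline{\mathbb{F}_q}(X_1,X_3)$, substitute $U=AV$ (legitimate since $d_1\geq 2$ forces $A\neq 0$). Then $U^2+UA+B=0$ becomes the Artin--Schreier equation $V^2+V=B/A^2$. Combining Artin--Schreier theory with Gauss's lemma (applied to the monic polynomial $U^2+UA+B\in\overline{\mathbb{F}_q}[X_1,X_3][U]$), reducibility is equivalent to the existence of $r\in K$ with $B/A^2=r^2+r$. The whole problem therefore reduces to showing that $B/A^2$ is \emph{not} of Artin--Schreier form.

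Suppose for a contradiction that $B/A^2=r^2+r$ with $r=r_1/r_2\in K$ in lowest terms. Clearing denominators yields $B\,r_2^2=A^2\,r_1(r_1+r_2)$. A total-degree comparison, using $\deg B-2\deg A=d_2+1-2d_1>0$, rules out $\deg r_1\leq\deg r_2$ (which would give $\deg B\leq 2\deg A$) and yields
\[
2(\deg r_1-\deg r_2)=d_2+1-2d_1,
\]
so $d_2$ must be odd, otherwise we are already done. Assuming $d_2$ odd, equating the top-degree homogeneous parts (valid because $\deg r_1>\deg r_2$ guarantees $(r_1+r_2)^{\mathrm{top}}=r_1^{\mathrm{top}}$) delivers $B^{\mathrm{top}}=\bigl(A^{\mathrm{top}}\,r_1^{\mathrm{top}}/r_2^{\mathrm{top}}\bigr)^2$, so $B^{\mathrm{top}}$ must be a square in $\overline{\mathbb{F}_q}[X_1,X_3]$.

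Now $B^{\mathrm{top}}=b_{d_2,0}(X_1+X_3)(X_1^{d_2}+X_3^{d_2})$; exploiting the characteristic-two factorization $X_1^{d_2}+X_3^{d_2}=(X_1+X_3)\,q_{d_2}(X_1,X_3)$ with $q_{d_2}:=\sum_{i=0}^{d_2-1}X_1^iX_3^{d_2-1-i}$, one finds $B^{\mathrm{top}}=b_{d_2,0}(X_1+X_3)^2\,q_{d_2}$, and the question reduces to whether $q_{d_2}$ is a square. In characteristic two, a polynomial is a square precisely when every monomial has even exponent in every variable; since $d_2\geq 3$ is odd, the monomial $X_1^{d_2-2}X_3$ in $q_{d_2}$ has exponent one in $X_3$, so $q_{d_2}$ is not a square. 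This contradiction proves that $F$ is absolutely irreducible over $\mathbb{F}_q$, which establishes the claim. The main obstacle is the top-degree analysis of the third paragraph: one must justify that the hypothesis $d_2+1>2d_1$ rules out the degenerate regime $\deg r_1=\deg r_2$ in which cancellation could destroy the leading term of $r_1+r_2$, and this is exactly the role the strict inequality plays.
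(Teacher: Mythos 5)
Your proof is correct and follows essentially the same route as the paper's: reduce to a monic quadratic in $X_2+X_4$ over $\overline{\mathbb{F}_q}[X_1,X_3]$, characterize reducibility by the existence of a rational root $r_1/r_2$, use $d_2+1>2d_1$ in a degree comparison to force $\deg r_1>\deg r_2$ and the top-degree balance, and conclude that $(X_1+X_3)(X_1^{d_2}+X_3^{d_2})$ would have to be a square in $\overline{\mathbb{F}_q}[X_1,X_3]$. The only cosmetic differences are your Artin--Schreier/Gauss-lemma framing of the reducibility criterion and your final non-squareness test via parity of monomial exponents, where the paper instead writes $d_2=2^jk$ and counts the multiplicity of the factor $(X_1+X_3)$.
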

\proof

The polynomial  $F(X_0,X_1,X_2,X_3,X_4)$ reads

\begin{eqnarray*}
(X_2 +X_4)^2X_0^{d-1} + (X_2+X_4)\sum_{i \leq d_1}a_i(X_1^i+X_3^i)X_0^{d-i} +(X_1+X_3)\sum_{0<i\leq d}  b_{i,0}\left(X_1^i  +X_3^i\right)X_0^{d-i}.
\end{eqnarray*}

The affine part of $\mathcal{S}_{f_1,f_2}$ can be written as
\begin{equation}\label{Eq:Y'}
Z^2+Z\frac{\sum_{i \leq d_1}a_i(X_1^i+X_3^i)}{a_{d_1}(X_1^{d_1}+X_3^{d_1})}+(X_1+X_3)\frac{\sum_{0<i\leq d}b_{i,0}(X_1^i+X_3^i)}{a_{d_1}^2(X_1^{d_1}+X_3^{d_1})^2},
\end{equation}
where $Z=(X_2+X_4)/\left(a_{d_1}(X_1^{d_1}+X_3^{d_1})\right)$.

We will prove that the surface $\mathcal{Y}$ defined by the affine equation \eqref{Eq:Y'} is absolutely irreducible.

Now, the surface $Z^2+Z\frac{\sum_{i\leq d_1}a_i(X_1^i+X_3^i)}{a_{d_1}(X_1^{d_1}+X_3^{d_1})}+(X_1+X_3)\frac{\sum_{0<i\leq d}b_{i,0}(X_1^i+X_3^i)}{a_{d_1}^2(X_1^{d_1}+X_3^{d_1})^2}$ is reducible only if there exists $\overline{Z}(X_1,X_3)=\frac{\overline{Z}_1(X_1,X_3)}{\overline{Z}_2(X_1,X_3)}$, with $\overline{Z}_i(X_1,X_3)\in \overline{\mathbb{F}_q}[X_1,X_3]$, such that 
\begin{equation}\label{equa0:Y'}
\begin{aligned}
(\overline{Z}_1(X_1,X_3))^2+\overline{Z}_1(X_1,X_3)\overline{Z}_2(X_1,X_3)\frac{\sum_{i\leq d_1}a_i(X_1^i+X_3^i)}{a_{d_1}(X_1^{d_1}+X_3^{d_1})}&+\\+(\overline{Z}_2(X_1,X_3))^2(X_1+X_3)\frac{\sum_{0<i\leq d}b_{i,0}(X_1^i+X_3^i)}{a_{d_1}^2(X_1^{d_1}+X_3^{d_1})^2}&=0.
\end{aligned}
\end{equation}

Then Equation \eqref{equa0:Y'} implies that

\begin{equation}\label{equa:Y'}
\begin{aligned}
(\overline{Z}_1(X_1,X_3))^2 a_{d_1}^2(X_1^{d_1}+X_3^{d_1})^2+\overline{Z}_1(X_1,X_3)\overline{Z}_2(X_1,X_3)a_{d_1}(X_1^{d_1}+X_3^{d_1}) \sum_{i\leq d_1}a_i(X_1^i+X_3^i)&+\\+(\overline{Z}_2(X_1,X_3))^2(X_1+X_3)\sum_{0<i\leq d}b_{i,0}(X_1^i+X_3^i)&=0.
\end{aligned}
\end{equation}

Since $d_2+1 > 2d_1$,  $\deg (\overline{Z}_1) > \deg(\overline{Z}_2)$, otherwise
$2\deg(\overline{Z}_1) +2d_1 \leq \deg(\overline{Z}_1) + \deg(\overline{Z}_2) +2d_1 < 2\deg(\overline{Z}_2)+d_2+1$ and \eqref{equa:Y'} cannot be satisfied.

Also, the same argument shows that $2\deg(\overline{Z}_1) +2d_1 = 2\deg(\overline{Z}_2)+d_2+1$ and therefore the highest homogeneous parts in $(\overline{Z}_1(X_1,X_3))^2a_{d_1}^2(X_1^{d_1}+X_3^{d_1})^2$ and in $(\overline{Z}_2(X_1,X_3))^2(X_1+X_3)\sum_{0<i\leq d}b_{i,0}(X_1^i+X_3^i)$ must be equal.

Write $d=d_2=2^{j}k$, with $j \geq 0$ and $k$ odd. The above argument yields that, in particular, 
$$L_d(X_1,X_3)= (X_1+X_3) (X_1^d+X_3^d)=(X_1+X_3)(X_1^k+X_3^k)^{2^j},$$
the highest homogeneous term in $(X_1+X_3)\sum_{0<i\leq d}b_{i,0}(X_1^i+X_3^i)$, is a square in $\overline{\mathbb{F}_q}[X_1,X_3]$.

\begin{itemize}
\item If $j>0$ then $(X_1+X_3)^{2^j+1}$ is the is the highest power of $(X_1+X_3)$ dividing $L_d(X_1,X_3).$ Since $2^j+1$ is odd, $L_d(X_1,X_3)$ is not a square in $\overline{\mathbb{F}_q}[X_1,X_3]$ and the surface $\mathcal{Y}$ is irreducible.
\item If $j=0$, following the same argument as above, if $\mathcal{Y}$ is reducible then $k=1$, a contradiction to $d>1.$
\end{itemize}

This shows that if $d=d_2>d_1>1$ then $\mathcal{Y}$ and therefore $\mathcal{S}_{f_1,f_2}$ is absolutely irreducible or contains an absolutely irreducible component defined over $\mathbb{F}_q$. This completes the proof.

\endproof

\begin{proposition}\label{Prop:p_even d_2+1<2d_1}
Let $\mathcal{S}_{f_1,f_2}$ be as in \eqref{Eq:S_f}, $d=d_2 > d_1>1$, $(j_1,j_2)=(1,0)$.  If $d_2+1 \leq 2d_1$ and $d_2$ is even then  $\mathcal{S}_{f_1,f_2}$ contains an absolutely irreducible component defined over $\mathbb{F}_q$.
\end{proposition}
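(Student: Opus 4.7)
The plan is to regard the affine equation of $\mathcal{S}_{f_1,f_2}$ as a monic quadratic polynomial in $Z:=X_2+X_4$ over $\mathbb{F}_q[X_1,X_3]$, and show that it either is absolutely irreducible or factors into two $\mathbb{F}_q$-rational linear (in $Z$) factors. Setting $X_0=1$ and putting $\phi(X_1,X_3):=\sum_{i\leq d_1}a_i(X_1^i+X_3^i)$ and $\psi(X_1,X_3):=\sum_{i\leq d_2}b_{i,0}(X_1^i+X_3^i)$, the affine equation reads $Z^2+\phi Z+(X_1+X_3)\psi=0$, with $\deg\phi=d_1$ and $\deg\psi=d_2$. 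If this polynomial is absolutely irreducible, then $\mathcal{S}_{f_1,f_2}$ is absolutely irreducible and defined over $\mathbb{F}_q$, and we are done. Otherwise, by Gauss's lemma in the UFD $\overline{\mathbb{F}}_q[X_1,X_3]$, the polynomial factors in $\overline{\mathbb{F}}_q[X_1,X_3][Z]$ as $(Z+\bar Z)(Z+W)$ with $\bar Z, W\in\overline{\mathbb{F}}_q[X_1,X_3]$, and Vieta gives $\bar Z+W=\phi$ together with $\bar Z\, W=(X_1+X_3)\psi$.

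The key step is a short degree analysis. From the product relation one has $\deg\bar Z+\deg W=d_2+1$, which is \emph{odd} because $d_2$ is even; in particular $\deg\bar Z\neq\deg W$. From the sum relation one has $\deg(\bar Z+W)=\deg\phi=d_1$, and combined with the hypothesis $d_2+1\leq 2d_1$ this forces $\max\{\deg\bar Z,\deg W\}\leq d_1$ (otherwise, since the two degrees are distinct, $\deg(\bar Z+W)$ would equal this maximum and exceed $d_1$, contradicting $\deg\phi=d_1$). Consequently $\max\{\deg\bar Z,\deg W\}=d_1$, and so, up to swapping, $\deg W=d_1$ and $\deg\bar Z=d_2+1-d_1<d_1$.

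Galois descent now finishes the proof. The Frobenius $\sigma:x\mapsto x^q$ fixes all coefficients of $Z^2+\phi Z+(X_1+X_3)\psi$, hence permutes its two roots $\{\bar Z, W\}$; since $\sigma$ preserves degrees in $X_1,X_3$ and the two roots have distinct degrees, $\sigma$ cannot swap them and must fix each individually. Thus $\bar Z, W\in\mathbb{F}_q[X_1,X_3]$, so the two linear factors $Z+\bar Z$ and $Z+W$ are $\mathbb{F}_q$-rational; each is absolutely irreducible (being monic and linear in $X_2$), and their projective closures yield the desired absolutely irreducible $\mathbb{F}_q$-rational components of $\mathcal{S}_{f_1,f_2}$.

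The main obstacle I anticipate is this degree analysis: both hypotheses (\emph{$d_2$ even} and \emph{$d_2+1\leq 2d_1$}) are needed simultaneously to force $\deg\bar Z\neq\deg W$ and to prevent both degrees from exceeding $d_1$ via leading-term cancellation in $\bar Z+W=\phi$. Without either hypothesis the Galois-swap scenario (where $\sigma(\bar Z)=W$) is not excluded, and the factors could only be defined over $\mathbb{F}_{q^2}$, leaving $\mathcal{S}_{f_1,f_2}$ without an absolutely irreducible $\mathbb{F}_q$-rational component.
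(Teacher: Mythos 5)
Your proof is correct, but it follows a genuinely different route from the paper's. The paper does not work with the full surface: it cuts $\mathcal{S}_{f_1,f_2}$ with the plane $X_3=X_4=0$, obtaining the plane curve $X_2^2+X_2\phi(X_1)+X_1\psi(X_1)=0$ with $\phi=\sum_{i\le d_1}a_iX_1^i$ and $\psi=\sum_{0<i\le d_2}b_{i,0}X_1^i$, rewrites it in Artin--Schreier form $Z^2+Z+u=0$ over $\overline{\mathbb{F}}_q(X_1)$ with $u=X_1\psi/\phi^2$, and concludes absolute irreducibility from the criterion ``$u$ has a pole of odd order at some place'', applied at $\eta=\infty$. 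Your argument instead treats the whole affine equation as a monic quadratic in $Z=X_2+X_4$ over $\mathbb{F}_q[X_1,X_3]$ (the characteristic-$2$ analogue of Remark \ref{remark:fattorizzazione}, correctly re-justified via Gauss's lemma) and replaces the goal ``prove absolute irreducibility'' by the dichotomy ``either the surface is absolutely irreducible, or its two components are forced to be $\mathbb{F}_q$-rational'', via the parity of $\deg\bar{Z}+\deg W=d_2+1$ and Frobenius descent. This robustness is exactly what is needed here, because absolute irreducibility can genuinely fail under the stated hypotheses: since $d_2+1\le 2d_1$, the function $u$ has a \emph{zero} of order $2d_1-d_2-1>0$ at infinity rather than a pole (the paper's computation $v_{\eta}(u)=d_2+1-2d_1$ has the sign reversed), so the Artin--Schreier criterion cannot be invoked at $\infty$. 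Concretely, for $f_1=Y+X^3+X$ and $f_2=X^4+X^3+X^2$ (so $d_1=3$, $d_2=4$, and all hypotheses of the proposition hold) the paper's section curve splits as $X_2^2+(X_1^3+X_1)X_2+X_1(X_1^4+X_1^3+X_1^2)=(X_2+X_1^2)(X_2+X_1^2+X_1^3+X_1)$, with both factors $\mathbb{F}_q$-rational --- precisely the outcome your dichotomy predicts. So your proof does not merely differ from the paper's: it sidesteps, and in effect repairs, a flaw in it.

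Two inaccuracies in your commentary, though they do not affect the validity of your proof. First, the hypothesis $d_2+1\le 2d_1$ is never actually used: once $\deg\bar{Z}\neq\deg W$, the sum relation alone gives $\max\{\deg\bar{Z},\deg W\}=\deg\phi=d_1$, and if $d_2+1>2d_1$ this is incompatible with $\deg\bar{Z}+\deg W=d_2+1$, so in that range the surface is simply absolutely irreducible; hence ``$d_2$ even'' by itself yields the conclusion. Second, and consequently, your closing claim that without $d_2+1\le 2d_1$ the Galois-swap scenario cannot be excluded is not right: without that inequality no factorization exists at all, so the swap never arises.
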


\proof
Consider the curve 
$$\mathcal{C}: F(1,X_1,X_2,0,0)=X_2^2+X_2\sum_{i\leq d_1}a_i X_1^i+X_1\sum_{0<i\leq d}b_{i,0}X_1^i=0,$$
which is equivalent to 
$$\mathcal{C}^{\prime}: X_2^2+X_2+\frac{X_1\sum_{0<i\leq d}b_{i,0}X_1^i}{\left(\sum_{i\leq d_1}a_i X_1^i\right)^2}=0.$$

Such a  curve is absolutely irreducible if there exists $\eta\in \mathbb{P}^{1}(\overline{\mathbb{F}_q})=\overline{\mathbb{F}_q} \cup \{\infty\}$ such that 
$$2\nmid v_{\eta}\left(\frac{X_1\sum_{0<i\leq d}b_{i,0}X_1^i}{\left(\sum_{i\leq d_1}a_i X_1^i\right)^2}\right)<0.$$
Consider $\eta=\infty$. In this case 
$$2\nmid v_{\eta}\left(\frac{X_1\sum_{0<i\leq d}b_{i,0}X_1^i}{\left(\sum_{i\leq d_1}a_i X_1^i\right)^2}\right)=d_2+1-2d_1<0$$
since $d_2$ is even.

This shows that $\mathcal{C}^{\prime}$ and therefore $\mathcal{S}_{f_1,f_2}$ are both absolutely irreducible.
\endproof

The following theorem summarizes the results about the even characteristic case.
 \begin{theorem}\label{Th:finalp=2}
Let $\mathcal{S}_{f_1,f_2}$ be as in \eqref{Eq:S_f} contains an absolutely irreducible component defined over $\mathbb{F}_q$ unless $1<d_1<d_2\leq 2d_1-1$, $d_2$ odd, $j_2=0$.
\end{theorem}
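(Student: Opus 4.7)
The plan is essentially a bookkeeping exercise: route every possible configuration of $(d_1, d_2, j_1, j_2)$ to one of the preceding propositions, and then identify the single configuration that escapes all of them. The key simplifying observation is that the shape $f_1(X,Y) = Y + \sum_i a_i X^i$ pins down $j_1 = 1$ (since $a_{0,1} = 1$), which trims the case analysis substantially.

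First I would invoke Remark \ref{remark:4.1}, which compiles the outputs of Propositions \ref{d1d2} and \ref{Prop:j_bar} under the present shape of $f_1$: assuming $\mathcal{S}_{f_1,f_2}$ contains no absolutely irreducible $\mathbb{F}_q$-rational component, we are reduced to $d_1 = d_2 = 1$ or to $d_1 < d_2$ with $(j_1, j_2) \in \{(1,0), (1,1)\}$. The first scenario is the classical elliptic quadric regime and lies outside the scope of the claim; the substantive analysis concerns the second, in which I assume throughout that $d_2 > d_1$.

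Next I would peel off $j_2 = 1$ by direct appeal to Proposition \ref{Prop_p2_1}, which in characteristic $2$ yields an absolutely irreducible $\mathbb{F}_q$-rational component as soon as $d_2 > d_1$ and $j_2 = 1$. This leaves only $j_2 = 0$. Within this subcase I would split on $d_1$: if $d_1 = 1$, Proposition \ref{Prop:p_even_d1=1} applies. If $d_1 > 1$, I would further split on how $d_2$ compares to $2d_1$: when $d_2 + 1 > 2d_1$, Proposition \ref{Prop:p_even_j2=0} produces a component, while when $d_2 + 1 \leq 2d_1$ and $d_2$ is even, Proposition \ref{Prop:p_even d_2+1<2d_1} does. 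The only configuration that escapes every branch is $1 < d_1 < d_2 \leq 2d_1 - 1$ with $d_2$ odd (and $j_2 = 0$), which is precisely the exception in the statement.

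There is no genuine obstacle here: all the difficult irreducibility arguments live inside Propositions \ref{Prop:p_even_d1=1}, \ref{Prop:p_even_j2=0}, and \ref{Prop:p_even d_2+1<2d_1}, and the present theorem is essentially the decision tree that stitches them together. The only care required is to track the parity condition on $d_2$ accurately and to ensure that each branch of the case split is routed to the correct prior proposition, so that neither the boundary case $d_2 = 2d_1 - 1$ nor the parity subcase is accidentally omitted.
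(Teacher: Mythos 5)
Your proposal is correct and takes essentially the same approach as the paper: Theorem \ref{Th:finalp=2} is presented there as a summary with no separate written proof, its intended justification being exactly the decision tree you describe --- Remark \ref{remark:4.1} (i.e.\ Propositions \ref{d1d2} and \ref{Prop:j_bar}) to reduce to $d_1<d_2$ with $(j_1,j_2)\in\{(1,0),(1,1)\}$, then Propositions \ref{Prop_p2_1}, \ref{Prop:p_even_d1=1}, \ref{Prop:p_even_j2=0} and \ref{Prop:p_even d_2+1<2d_1} to close every branch except $1<d_1<d_2\leq 2d_1-1$ with $d_2$ odd and $j_2=0$. Your handling of the residual case $d_1=d_2=1$ (setting it outside the scope of the claim) matches the paper's own reading, since that case is carried separately into the Main Theorem as the exception $d_1=d_2=1$ rather than being covered by this statement.
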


\section{$p>2$}



\begin{proposition}\label{Proposition:6.1}
Let $\mathcal{S}_{f_1,f_2}$ be as in \eqref{Eq:S_f}, and $p>2$. If $d=d_2>d_1$ and $(j_1,j_2)=(1,1)$ then $\mathcal{S}_{f_1,f_2}$ contains an absolutely irreducible component defined over $\mathbb{F}_q$.
\end{proposition}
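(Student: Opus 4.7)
The plan is to follow the blueprint of Proposition \ref{Prop_p2_1}, adapting the $X_2=X_4$ slicing argument to odd characteristic. The goal is to extract from $F|_{X_2=X_4}$ a non-repeated absolutely irreducible $\mathbb{F}_q$-rational factor, so that Lemma \ref{Lemma:Aubry} produces an absolutely irreducible $\mathbb{F}_q$-rational component of $\mathcal{S}_{f_1,f_2}$.

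The first step is to compute $F|_{X_2=X_4}$ in \eqref{Eq:S_f}. The summand $(X_2-X_4)^2 X_0^{d-1}$ and the one with $\sum_i a_i(X_3^i-X_1^i)X_0^{d-i}$ vanish identically, while $j_2=1$ forces $b_{i,j}=0$ for $j\geq 2$, leaving
$$F|_{X_2=X_4} = -(X_1-X_3)\Big[\sum_{i} b_{i,0}(X_3^i-X_1^i)X_0^{d-i} + X_4 \sum_{i} b_{i,1}(X_3^i-X_1^i)X_0^{d-i-1}\Big].$$
Factoring $X_3^i-X_1^i = -(X_1-X_3)S_i$ with $S_i := X_1^{i-1}+X_1^{i-2}X_3+\cdots+X_3^{i-1}$ yields
$$F|_{X_2=X_4} = (X_1-X_3)^2 \cdot H(X_0,X_1,X_3,X_4), \qquad H = A + X_4 B,$$
where $A := \sum_i b_{i,0}\,S_i X_0^{d-i}$ and $B := \sum_i b_{i,1}\,S_i X_0^{d-i-1}$.

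The second step is to analyze $H$. Since $j_2=1$ there exists $i^\star$ with $b_{i^\star,1}\neq 0$, and the monomials $S_i X_0^{d-i-1}$ are $\mathbb{F}_q$-linearly independent in $\mathbb{F}_q[X_0,X_1,X_3]$, so $B\neq 0$ and $H$ has degree exactly one in $X_4$. Setting $g := \gcd(A,B)\in \mathbb{F}_q[X_0,X_1,X_3]$, the quotient $I := H/g = A/g + X_4\cdot B/g$ is primitive in $X_4$ with coprime coefficients over the UFD $\overline{\mathbb{F}_q}[X_0,X_1,X_3]$; hence $I$ is irreducible in $\overline{\mathbb{F}_q}[X_0,X_1,X_3,X_4]$ and defined over $\mathbb{F}_q$. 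Since $I$ involves $X_4$ nontrivially, while the remaining factors $(X_1-X_3)$ and $g$ in $F|_{X_2=X_4}=(X_1-X_3)^2\cdot g\cdot I$ are $X_4$-free, $I$ is coprime to both of them and therefore appears with multiplicity exactly one. Thus $V(I)$ is a reduced absolutely irreducible $\mathbb{F}_q$-rational component of $\mathcal{S}_{f_1,f_2}\cap\{X_2=X_4\}$, and Lemma \ref{Lemma:Aubry} finishes the proof.

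The only delicate point is verifying that the primitive factor $I$ is truly non-repeated in $F|_{X_2=X_4}$ rather than being absorbed into the $(X_1-X_3)^2$ factor; but this is automatic from the $X_4$-degree bookkeeping once $B\neq 0$ is established. No odd-characteristic-specific machinery is needed; the argument is structurally identical to the even-characteristic Proposition \ref{Prop_p2_1}, with ordinary minus signs in place of additions.
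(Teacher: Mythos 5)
Your proof follows the same route as the paper's: slice with the hyperplane $X_2=X_4$, extract from the restriction a factor of degree one in the remaining variable, and conclude with Lemma \ref{Lemma:Aubry}. Your gcd/primitivity bookkeeping correctly supplies the details the paper leaves implicit, and your argument even absorbs the case $b_{d-1,1}\neq 0$ directly (the paper instead disposes of that case first, by appealing to Proposition \ref{d1d2}).

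There is, however, one concrete gap: the inference ``$j_2=1$, hence $B\neq 0$'' fails if $i^\star=0$. By your own definition $S_0$ is the empty sum, i.e.\ $S_0=0$ (equivalently, the $i=0$, $j=1$ term of $F$ is $b_{0,1}(X_4-X_2)X_0^{d-1}$, which vanishes identically on the slice), so the linear-independence claim does not cover it. Thus if $b_{0,1}$ were the only nonzero coefficient with $j=1$ --- e.g.\ $f_2=b_{0,1}Y+b_{d,0}X^d$, which satisfies $(j_1,j_2)=(1,1)$ and $d_2>d_1$ --- then $B\equiv 0$, $H$ is free of $X_4$, and the slice degenerates to $(X_1-X_3)^2\,b_{d,0}S_d$; when $d$ is a power of $p$ this equals $b_{d,0}(X_1-X_3)^{d+1}$, which carries no reduced absolutely irreducible $\mathbb{F}_q$-rational component at all, so the argument collapses. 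The missing ingredient is the paper's standing normalization $b_{0,1}=0$ (Remark \ref{remark:b_01}), which guarantees $i^\star\geq 1$ and hence $B\neq 0$; this is precisely the point at which the paper's own proof cites that remark. With that one citation added, your proof is complete.
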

\proof
By Proposition \ref{d1d2}, we can assume that $b_{d-1,1} =0$. The surface $\mathcal{S}_{f_1,f_2}$ reads

\begin{eqnarray*}
(X_2-X_4)^2X_0^{d-1}+(X_2-X_4)\left[\sum_ia_i(X_1^i-X_3^i)X_0^{d-i}\right]&& \\ +(X_1-X_3)\left[\sum_{i>0}b_{i,0}(X_1^i-X_3^i)X_0^{d-i}+\sum_{0 \leq i \leq d-2}b_{i,1}(X_1^iX_2-X_3^iX_4)X_0^{d-i-1}\right]&=0&
\end{eqnarray*}

Let $\mathcal{W} = \mathcal{S}_{f_1,f_2} \cap (X_2=X_4)$ be defined by 

$$
(X_1-X_3) \left[\sum_{i>0}b_{i,0}(X_1^i-X_3^i)X_0^{d-i} +X_2 \sum_{0<i\leq d-2}b_{i,1}(X_1^i-X_3^i)X_0^{d-i-1}\right]=0.
$$

If $b_{i,1} \neq 0$ for some $0 < i \leq d-2$, then $\mathcal{W}$ is of degree 1 in $X_2$.

Thus $b_{i,1}=0$ for $i>0$ and Remark \ref{remark:b_01} yields the claim.

\endproof

\begin{proposition}\label{prop:d_2>d_1=1}
Let $\mathcal{S}_{f_1,f_2}$ be as in \eqref{Eq:S_f}, $d=d_2>d_1=1$, $(j_1,j_2)=(1,0)$, and $p>2$. If $d_2 \neq p^j$, with $j \geq 0$, or $b_{i,j} \neq 0$ for some $(i,j) \neq (d_2,0)$, or $h=\infty$ and $a_{d_1}=0$ then $\mathcal{S}_{f_1,f_2}$ contains an absolutely irreducible component defined over $\mathbb{F}_q$.
\end{proposition}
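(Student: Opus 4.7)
The plan is to complete the square in $Z := X_2 - X_4$ (legitimate since $p > 2$) so as to reduce the question to whether a certain polynomial $P(X_1, X_3)$ is a square in $\overline{\mathbb{F}_q}[X_1, X_3]$, and then handle the three disjunctive hypotheses separately.

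Since $d_1 = 1$ and $j_2 = 0$, the affine equation of $\mathcal{S}_{f_1, f_2}$ reads
\[
Z^2 + a_1 Z W + W \sum_{i \geq 1} b_{i,0}(X_1^i - X_3^i) = 0, \qquad W := X_1 - X_3.
\]
Setting $Z' := Z + (a_1/2)\, W$ and using $X_1^i - X_3^i = W\, Q_i(X_1, X_3)$ with $Q_i := X_1^{i-1} + X_1^{i-2} X_3 + \cdots + X_3^{i-1}$, the equation becomes
\[
Z'^2 = W^2 P(X_1, X_3), \qquad P(X_1, X_3) := \frac{a_1^2}{4} - \sum_{i \geq 1} b_{i,0}\, Q_i(X_1, X_3).
\]
Since $X_2 + X_4$ does not appear, $\mathcal{S}_{f_1, f_2}$ is a cylinder over the surface $\mathcal{Y}: Z'^2 - W^2 P = 0$, and both share the same decomposition into absolutely irreducible components. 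By Remark \ref{remark:fattorizzazione}, $\mathcal{Y}$ is absolutely irreducible exactly when $P$ (equivalently $W^2 P$) is not a square in $\overline{\mathbb{F}_q}[X_1, X_3]$; as $\mathcal{Y}$ is $\mathbb{F}_q$-defined, such absolute irreducibility yields at once the desired $\mathbb{F}_q$-rational component of $\mathcal{S}_{f_1, f_2}$. The task becomes: under each hypothesis, show $P$ is not a square.

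For case (a), where $d_2 \neq p^j$, I would write $d_2 = p^k m$ with $m > 1$, $\gcd(m, p) = 1$, and use
\[
Q_{d_2} = (X_1 - X_3)^{p^k - 1} \prod_{\zeta^m = 1,\ \zeta \neq 1}(X_1 - \zeta X_3)^{p^k}.
\]
With $p$ odd, $p^k$ is odd, so the linear factors with $\zeta \neq 1$ have odd multiplicity in $Q_{d_2}$, hence the top-degree form $-b_{d_2, 0}\, Q_{d_2}$ of $P$ is not a square and neither is $P$. For cases (b) and (c), $d_2 = p^j$ makes $Q_{d_2} = W^{p^j - 1}$ itself a square and the leading-form argument fails. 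I would specialize $X_3 = 0$ and analyze
\[
\pi(X) := P(X, 0) = \frac{a_1^2}{4} - \sum_{i \geq 1} b_{i,0}\, X^{i - 1} \in \overline{\mathbb{F}_q}[X];
\]
non-squareness of $\pi$ forces non-squareness of $P$. Writing $\pi = \sigma^2$ with $\sigma = \tau_0 X^{(d_2 - 1)/2} + \tau_1 X^{(d_2 - 3)/2} + \cdots + \tau_{(d_2 - 1)/2}$ and comparing coefficients of $X^{d_2 - 1 - k}$ gives $\tau_0^2 = -b_{d_2, 0}$ followed by the recursion $\sum_{i + j = k} \tau_i \tau_j = -b_{d_2 - k, 0}$. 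The zero coefficients of $\pi$ strictly below the top position propagate forced zeros among the $\tau_i$'s; at the first position where $\pi$ has a nonzero coefficient (the $b_{h, 0}$-position in case (b), or the constant position $a_1^2/4$ in case (c) when $a_{d_1} \neq 0$), the matching equation has the shape $2 \tau_0 \tau_\ast = (\text{nonzero})$ while $\tau_\ast$ is already forced to be $0$ by the preceding constraints, producing a contradiction.

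The main obstacle is case (c) read literally with $a_{d_1} = 0$: there $\pi = -b_{d_2, 0} X^{p^j - 1}$ and $P = -b_{d_2, 0} W^{p^j - 1}$ genuinely are squares, so $\mathcal{Y}$ splits into the two linear components $Z' \pm \sqrt{-b_{d_2, 0}}\, W^{(p^j + 1)/2} = 0$. These are defined over $\mathbb{F}_q$ exactly when $-b_{d_2, 0}$ is a square in $\mathbb{F}_q$; otherwise they form a Galois-conjugate pair over $\mathbb{F}_{q^2}$ and $\mathcal{S}_{f_1, f_2}$ has no $\mathbb{F}_q$-rational absolutely irreducible component of dimension $3$. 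The closing step must therefore either impose (or extract from the hypothesis) that $-b_{d_2, 0}$ is an $\mathbb{F}_q$-square, or observe that the remaining configuration is precisely Main Theorem case 5, where the $Q(4, q)$-reduction recorded in the remark after the Main Theorem applies and the classification transfers from the known classification of $Q(4, q)$-ovoids.
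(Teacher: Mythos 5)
Your reduction and your case (a) are essentially the paper's own proof: completing the square in $X_2-X_4$ is the same as passing to the discriminant (your $P$ is the paper's $\Delta$ divided by $4(X_1-X_3)^2$), and the paper's case $k>1$ is exactly your odd-multiplicity argument on the top homogeneous form. Your closing remark on case (c) is also a genuine catch rather than an obstacle of your own making: as literally written, the hypothesis ``$h=\infty$ and $a_{d_1}=0$'' makes the statement false (the Kantor ovoid $f_1=Y$, $f_2=-nX^{p^j}$ with $n$ a nonsquare satisfies it, and there $\mathcal{S}_{f_1,f_2}$ splits into two components conjugate over $\mathbb{F}_{q^2}$, neither defined over $\mathbb{F}_q$). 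The paper's own proof in fact derives $a_1=0$ from reducibility when $h=\infty$, so the intended hypothesis is $a_{d_1}\neq 0$, consistent with case 5 of the Main Theorem; your treatment of that corrected case is fine and agrees with the paper's.

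The genuine gap is in case (b), $d_2=p^j$ and $h<\infty$: specializing $X_3=0$ destroys exactly the information the proof needs. The contradiction you invoke (``$2\tau_0\tau_\ast$ equals something nonzero while $\tau_\ast$ is already forced to be $0$'') only occurs when the first nonzero coefficient of $\pi$ below the top sits outside the range of coefficients of $\sigma$, i.e.\ when $d_2-h>(d_2-1)/2$, equivalently $h\leq (d_2-1)/2$. If instead $h\geq (d_2+1)/2$, then $\tau_{d_2-h}$ is a legitimate coefficient, the equation $2\tau_0\tau_{d_2-h}=-b_{h,0}$ merely determines it, and the recursion proceeds with no contradiction. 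Worse, in this regime $\pi$ can genuinely be a square, so no argument using only $\pi$ can work: for $p=3$, $d_2=9$, take $b_{9,0}=-1$, $b_{5,0}=c\neq 0$, $b_{1,0}=a_1^2-c^2$ and all other $b_{i,0}=0$ (so $h=5$); then $\pi(X)=X^8-cX^4+c^2=(X^4+c)^2$. Yet $P$ itself is \emph{not} a square, which is what the paper proves by staying bivariate: equating homogeneous parts of $\Delta$ with those of its putative square root forces $(X_1-X_3)^{(p^j-1)/2}\mid (X_1^h-X_3^h)$, and since the multiplicity of $X_1-X_3$ in $X_1^h-X_3^h$ is the exact power $p^{j_1}$ of $p$ dividing $h$, with $j_1\leq j-1$, one gets $p^j-1\leq 2p^{j_1}\leq 2p^j/p$, i.e.\ $p^j\leq p/(p-2)$. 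That divisibility statement becomes vacuous after setting $X_3=0$, which is why your approach cannot see it. (A further wrinkle your recursion also misses: for $h=1$ the relevant constant coefficient of $\pi$ is $a_1^2/4-b_{1,0}$, not $-b_{1,0}$, and it can vanish; for $(p,d_2,h)=(3,3,1)$ with $a_1^2=4b_{1,0}$ the polynomial $P$ really is a square and the corresponding $\mO_5(f_1,f_2)$ is an ovoid equivalent to Kantor's, a borderline case the paper's proof glosses over as well.) So cases (a) and corrected (c) stand, but case (b) needs the bivariate homogeneous-part argument of the paper.
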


\proof
The surface $\mathcal{S}_{f_1,f_2}$ reads
\begin{eqnarray*}
(X_2-X_4)^2X_0^{d-1}+a_1(X_1-X_3)(X_2-X_4)X_0^{d-1} +(X_1-X_3)\left[\sum_{i\leq d_2}b_{i,0}(X_1^i-X_3^i)X_0^{d-i}\right].
\end{eqnarray*}

If $\mathcal{S}_{f_1,f_2}$ is reducible then by Remark \ref{remark:fattorizzazione}, there exist $r(X_1,X_3)$, $s(X_1,X_3)$ such that 
\begin{eqnarray*}
(X_1-X_3) \left[ \sum_{i \leq d_2}b_{i,0}(X_1^i-X_3^i)\right]&=&r(X_1,X_3)s(X_1,X_3),\\ a_1(X_1-X_3)&=&r(X_1,X_3)+s(X_1,X_3).
\end{eqnarray*}

Consider the affine part of $\mathcal{S}_{f_1,f_2}$ and note that the surface is reducible if and only if
$$\Delta(X_1,X_3)=a_1^2(X_1-X_3)^2-4(X_1-X_3)\sum_{i\leq d_2}b_{i,0}(X_1^i-X_3^i)$$
is a  square in $\overline{\mathbb{F}_q}[X_1,X_3]$.

Suppose that $\Delta(X_1,X_3)$ is a  square in $\overline{\mathbb{F}_q}[X_1,X_3]$. Then, the highest homogeneous term $L_{d_2}(X_1,X_3)=-4b_{d_2,0}(X_1-X_3)(X_1^{d_2}-X_3^{d_2})$  is  a square in $\overline{\mathbb{F}_q}[X_1,X_3]$ too. Write $d_2=kp^j$, with $p\nmid k$. Then $L_{d_2}(X_1,X_3)=-4b_{d_2,0}(X_1-X_3)\left(X_1^k-X_3^k\right)^{p^j}$. 

\begin{itemize}
    \item Let $k>1$. If $L_{d_2}(X_1,X_3)$ is a square in $\overline{\mathbb{F}_q}[X_1,X_3]$ then $R(X_1):=L_{d_2}(X_1,1)=(X_1-1)(X_1^k-1)^{p^j}$ is a square in $\overline{\mathbb{F}_q}[X_1]$. 
    
    Every root of $R(X_1)$ different from $1$ has algebraic multiplicity equal to $p^j$ and $R(X_1)$ cannot be a square in $\overline{\mathbb{F}_q}[X_1]$.
    
    \item Let now $k=1$ and $d_2=p^j$ for some $j\geq 0$. First, suppose $h<\infty$. Write 

\begin{eqnarray*}
\Delta(X_1,X_3)&=&L_{d_2}(X_1,X_3)+L_{h}(X_1,X_3)+\cdots\\
&=&\left((X_1-X_3)^{(p^j+1)/2}+M(X_1,X_3)+\cdots\right)^2,
\end{eqnarray*}
where $M(X_1,X_3)$ has degree $(h+1)-(p^j+1)/2$.  Now $L_{h}(X_1,X_3):=2(X_1-X_3)^{(p^j+1)/2}M(X_1,X_3)$ equals 
$$
\begin{cases}
-4b_{h,0}(X_1-X_3)(X_1^{h}-X_3^{h}),& \textrm{if } h>1;\\
(a_1^2-4b_{1,0})(X_1-X_3)^2,& \textrm{if } h=1.\\
\end{cases}$$

If $h>1$ then $(X_1-X_3)^{(p^j-1)/2}\mid (X_1^{h}-X_3^{h})$. Write $h=k_1p^{j_1}$, so that $(X_1^{h}-X_3^{h})=(X_1^{k_1}-X_3^{k_1})^{p^{j_1}}$. Since  $(X_1-X_3)^{p^{j_1}}$ is the highest power of $(X_1-X_3)$ dividing  $(X_1^{k_1}-X_3^{k_1})^{p^{j_1}}$, we must have $(p^j-1)/2\leq p^{j_1}$. Since $j_1\leq j-1$, 
$$p^j-1\leq 2p^{j_1}\leq 2p^{j}/p,$$
and therefore $p^j\leq \frac{p}{p-2}$. This yields $p^j=3$, that is $j=1$ and $p=3$. 
Now, $(X_1-X_3)^4+\alpha (X_1-X_3)(X_1^2-X_3^2)+\beta(X_1-X_3)^2$, $\alpha,\beta \in \mathbb{F}_q$, is a square in $\overline{\mathbb{F}_q}[X_1,X_3]$ only if $(X_1-X_3)^2+\alpha (X_1+X_3)+\beta$ is a square. By direct checking, this is possible only if $\alpha=\beta=0$.

If $h=1$, in this case $M(X_1,X_3)$ has degree $2-(p^j+1)/2$ and this yields $p^j=3$, a contradiction.

This shows that if there exists $i$, $0<i<d_2$, such that $b_{i,0} \neq 0$ then $\Delta(X_1,X_3)$ is not a square in $\overline{\mathbb{F}_q}[X_1,X_3]$.

If $h=\infty$, 
$r(X_1,X_3)s(X_1,X_3)=b_{d_2,0}(X_1-X_3)^{p^j+1}$ and $r(X_1,X_3)+s(X_1,X_3)=a_1 (X_1-X_3)$. Since $p^j+1>2$, $r(X_1,X_3)=\sqrt{-b_{d_2,0}} (X_1-X_3)^{(p^j+1)/2}=-s(X_1,X_3)$ and 
    $$a_1 (X_1-X_3)=r(X_1,X_3)+s(X_1,X_3)\equiv0,$$
  yielding $a_{1}= 0$.

\end{itemize}
\endproof

\begin{proposition}\label{prop:p>3}
Let $\mathcal{S}_{f_1,f_2}$ be as in \eqref{Eq:S_f}, and $p>3$. If $d=d_2>d_1>1$ and $(j_1,j_2)=(1,0)$ then $\mathcal{S}_{f_1,f_2}$ contains an absolutely irreducible component defined over $\mathbb{F}_q$, unless $(p,d_1,d_2,h)\in\{(5,2,5,3),(7,2,7,3),(7,3,7,5)\}$ or $d=d_2=2d_1-1\leq 11$. 
\end{proposition}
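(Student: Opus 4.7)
The strategy generalizes Proposition \ref{prop:d_2>d_1=1}. First, rewrite the affine equation of $\mathcal{S}_{f_1,f_2}$ (setting $X_0=1$) as a monic quadratic in $Z=X_2-X_4$:
\[
Z^2 - Z\, A(X_1,X_3) - (X_1-X_3)\,B(X_1,X_3)=0,
\]
where $A(X_1,X_3)=\sum_i a_i(X_3^i-X_1^i)$ and $B(X_1,X_3)=\sum_i b_{i,0}(X_3^i-X_1^i)$. Both $A$ and $B$ are antisymmetric in $X_1,X_3$, hence divisible by $(X_1-X_3)$; write $A=-(X_1-X_3)\alpha$ and $B=-(X_1-X_3)\beta$, so $\deg\alpha=d_1-1$ and $\deg\beta=d_2-1$. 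The discriminant factors as $\Delta=(X_1-X_3)^2 D$ with $D=\alpha^2-4\beta$, and by Remark \ref{remark:fattorizzazione} the surface $\mathcal{S}_{f_1,f_2}$ is reducible if and only if $D$ is a square in $\overline{\mathbb{F}_q}[X_1,X_3]$. The remainder of the proof proceeds by showing that, outside the listed exceptions, $D$ cannot be a square.

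I would then split on the comparison of the degrees $\deg\alpha^2=2d_1-2$ and $\deg(-4\beta)=d_2-1$. In the main case $d_2\geq 2d_1$, the leading form of $D$ is $-4b_{d_2,0}(X_1^{d_2}-X_3^{d_2})/(X_1-X_3)$. Writing $d_2=kp^j$ with $p\nmid k$, this equals $-4b_{d_2,0}(X_1-X_3)^{p^j-1}\bigl[(X_1^k-X_3^k)/(X_1-X_3)\bigr]^{p^j}$; since $p^j$ is odd, squareness forces $k=1$, i.e.\ $d_2=p^j$. One then expands $\sqrt D = c(X_1-X_3)^{(p^j-1)/2}+M$ with $c^2=-4b_{p^j,0}$ and $\deg M<(p^j-1)/2$, and equates the identity
\[
D = c^2(X_1-X_3)^{p^j-1} + 2c(X_1-X_3)^{(p^j-1)/2}M + M^2
\]
with the actual expansion $-4\sum_{i\leq h} b_{i,0}(X_1^i-X_3^i)/(X_1-X_3)+\alpha^2$. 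Matching the second-highest homogeneous component yields that $2c(X_1-X_3)^{(p^j-1)/2}$ must divide the relevant next term; analyzing the factor $(X_1^h-X_3^h)$ via $h=m p^{j'}$ with $p\nmid m$ (and using $p^{j'}\leq h<p^j$) gives a bound of the form $(p^j+1)/2 \leq p^{j'} \leq p^{j-1}$, which fails for $p\geq 5$ unless $p^j$ is very small. Combined with the additional interaction with $\alpha^2$ when $h\leq 2d_1-1$ and with the sub-case $h=\infty$ (where a direct computation shows a contradiction with $d_1>1$ unless $d_1$ is tiny), this identifies exactly the sporadic triples $(p,d_1,d_2,h)\in\{(5,2,5,3),(7,2,7,3),(7,3,7,5)\}$ as the only leftover possibilities.

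In the symmetric case $d_2\leq 2d_1-2$ the leading form of $D$ is $a_{d_1}^2\bigl((X_1^{d_1}-X_3^{d_1})/(X_1-X_3)\bigr)^2$, automatically a square. Writing $\sqrt D = a_{d_1}(X_1^{d_1}-X_3^{d_1})/(X_1-X_3) + M$ and matching the next homogeneous component produces a divisibility/degree constraint on $M$, and, exploiting that $d_2-1<2d_1-2$ so the $\beta$-contribution sits strictly below the leading term, the same type of argument based on the unique factorization of $(X_1^{d_1}-X_3^{d_1})/(X_1-X_3)$ (whose roots are distinct $d_1$-th roots of unity other than $1$, up to Frobenius) rules out every configuration. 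No exceptions survive in this regime. The degenerate case $d_2=2d_1-1$ is precisely where the two leading forms have equal degree and can mutually cancel in unpredictable ways; this is the reason for the exclusion $d_2=2d_1-1\leq 11$ in the statement, which is left for ad hoc treatment.

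The main obstacle is the intricate bookkeeping in Case $d_2\geq 2d_1$: tracking the powers of $(X_1-X_3)$ that appear in each $(X_1^i-X_3^i)/(X_1-X_3)$ with $i=h$ or $i=d_2$, juggling the interaction with $\alpha^2$ when $h$ is close to $2d_1-1$, and verifying by hand the small-parameter configurations that survive the divisibility bounds. It is precisely the failure of the $(p^j+1)/2\leq p^{j-1}$ inequality for very small $p^j$ that produces the three sporadic triples, so confirming no further exceptions requires a careful finite check for each of these quadruples.
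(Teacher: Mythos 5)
Your setup does match the paper's: both reduce to the quadratic in $Z=X_2-X_4$, invoke Remark \ref{remark:fattorizzazione} to translate reducibility into the discriminant being a square in $\overline{\mathbb{F}_q}[X_1,X_3]$, force $d_2=p^j$ from the leading form, and handle the regime $d_1<d_2\le 2d_1-2$ by the divisibility $(X_1^{d_1}-X_3^{d_1})\mid(X_1-X_3)(X_1^{d_2}-X_3^{d_2})$. But there are two genuine gaps. The first is a misreading of the statement: the exception ``$d=d_2=2d_1-1\le 11$'' excludes only the case $d_2=2d_1-1$ \emph{with} $d_2\le 11$; when $d_2=2d_1-1>11$ (i.e.\ $d_1>6$) the conclusion must be \emph{proved}, not ``left for ad hoc treatment''. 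This is roughly half of the paper's proof: there the top homogeneous form of $\Delta$ is $a_{d_1}^2(X_1^{d_1}-X_3^{d_1})^2-4b_{2d_1-1,0}(X_1-X_3)(X_1^{2d_1-1}-X_3^{2d_1-1})$, and one shows its dehomogenization $L(X_1,1)$ is never a square for $d_1>6$ by bounding the number and the multiplicity of its repeated roots, using $L^{(1)},\dots,L^{(4)}$, auxiliary combinations such as $X_1L^{(1)}-L$, and resultants, with separate treatments of $d_1\equiv 0,1,2$ and $2d_1\equiv 1\pmod p$. Nothing in your sketch replaces this; the case does not ``cancel in unpredictable ways'' and cannot simply be excluded.

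The second gap is that your mechanism for the sporadic triples is wrong. The inequality you invoke, $(p^j+1)/2\le p^{j'}\le p^{j-1}$, is equivalent to $p+p^{1-j}\le 2$ and therefore fails for \emph{every} $p\ge 5$ and every $j$, not merely for large $p^j$; consequently the sub-cases $h+1>2d_1$ and $h+1<2d_1$ end in outright contradictions for $p>3$ and contribute no exceptions at all. (Similarly, $h=\infty$ forces $r=-s$, hence $r+s\equiv 0$, contradicting $a_{d_1}\neq 0$, with no ``tiny $d_1$'' leftover.) Note that all three quadruples $(5,2,5,3),(7,2,7,3),(7,3,7,5)$ satisfy $h=2d_1-1$: they arise exclusively in the sub-case $h+1=2d_1$, where the degree-$2d_1$ term $-4b_{h,0}(X_1-X_3)(X_1^h-X_3^h)$ and $a_{d_1}^2(X_1^{d_1}-X_3^{d_1})^2$ have the same degree and can moreover cancel against a cross term $r_{(d_2+1)/2}\bigl(s_{2d_1-(d_2+1)/2}-r_{2d_1-(d_2+1)/2}\bigr)$ divisible by $(X_1-X_3)^{(d_2+1)/2}$. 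The contradiction via $(X_1-X_3)$-adic valuations requires $(d_2+1)/2\ge 5$, i.e.\ $d_2>7$, and even then one needs the expansion of $H(X_1,X_3)$ with binomial-coefficient identities, split according to $h\equiv 1\pmod p$ or not and $p\mid d_1$ or not. The surviving parameters $d_2=p\in\{5,7\}$, $1<d_1\le (d_2-1)/2$, $h=2d_1-1$ are precisely the three listed quadruples. Your proposal contains neither this case distinction nor the valuation/binomial analysis, which is the actual heart of the proof.
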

\proof

Note that  $\mathcal{S}_{f_1,f_2}$  reads
\begin{eqnarray*}
(X_2-X_4)^2X_0^{d-1}+(X_2-X_4)\left[\sum_{i\leq d_1}a_i(X_1^i-X_3^i)X_0^{d-i}\right] +(X_1-X_3)\left[\sum_{i\leq d_2}b_{i,0}(X_1^i-X_3^i)X_0^{d-i}\right].
\end{eqnarray*}

If $\mathcal{S}_{f_1,f_2}$ is reducible then by Remark \ref{remark:fattorizzazione}, there exist $r(X_1,X_3)$, $s(X_1,X_3)$ such that 
\begin{eqnarray*}
(X_1-X_3)\left[\sum_{i\leq d_2}b_{i,0}(X_1^i-X_3^i)\right]&=&r(X_1,X_3)s(X_1,X_3),\\ \left[\sum_{i\leq d_1}a_i(X_1^i-X_3^i)\right]&=&r(X_1,X_3)+s(X_1,X_3).
\end{eqnarray*}

If $\deg(r)> \deg(s)$ then we can suppose that   $\deg(r)=d_1$ and  $(X_1^{d_1}-X_3^{d_1})\mid (X_1-X_3)(X_1^{d_2}-X_3^{d_2})$. Since $d_1>1$, this yields $d_1\mid d_2$ and thus $d_1\leq d_2/2.$ 
  Now, 
    $$1+d_2=\deg(r)+\deg(s) \leq  d_1+d_1-1\leq d_2-1,$$
    a contradiction.

From now on we consider the case  $\deg(r)=\deg(s)=(d_2+1)/2$. Then $d_1 \leq (d_2+1)/2$. Recall that $\mathcal{S}_{f_1,f_2}$ is reducible if and only if
$$
\Delta(X_1,X_3):=\left[ \sum_i a_i (X_1^i-X_3^i) \right]^2-4(X_1-X_3) \left[ \sum_i b_{i,0}(X_1^i-X_3^i) \right]
$$
is a square in $\overline{\mathbb{F}_q}[X_1,X_3]$. We distinguish several cases.

\begin{itemize}

\item[1)] If $d_1 \leq d_2/2$ the homogenous part of the highest degree in $\Delta(X_1,X_3)$ is $L(X_1,X_3):=-4b_{d_2,0}(X_1-X_3)(X_1^{d_2}-X_3^{d_2})$. If $\Delta(X_1,X_3)$ is a square in $\overline{\mathbb{F}_q}[X_1,X_3]$ so it is $L(X_1,X_3)$. Let $d_2=kp^j$ for some non-negative $j$ and $p \nmid k$.

If $k>1$ then  $L(X_1,X_3)$ cannot be a square in $\overline{\mathbb{F}_q}[X_1,X_3]$.
    
From now on we can assume that  $k=1$ and then $d_2=p^j$. In this case $d_1\leq (d_2-1)/2$.

     Suppose  that $b_{i,0}=0$, for any $0<i<d_2$, i.e. $h=\infty$. 
    In this case $r(X_1,X_3)s(X_1,X_3)=b_{d_2,0}(X_1-X_3)^{p^j+1}$ and $r(X_1,X_3)+s(X_1,X_3)=\sum_i a_i (X_1^i-X_3^i)$. Since $p^j+1>2d_1$, $r(X_1,X_3)=\sqrt{-b_{d_2,0}} (X_1-X_3)^{(p^j+1)/2}=-s(X_1,X_3)$ and 
    $$\sum_i a_i (X_1^i-X_3^i)=r(X_1,X_3)+s(X_1,X_3)\equiv0,$$
   a contradiction to $a_{d_1}\neq 0$.
     
        

    From now on we suppose that  $h<\infty$. We distinguish three cases:
    \begin{itemize}
        \item[a)] $h+1>2d_1$. Write
        $$
        \Delta(X_1,X_3)=\left( (X_1-X_3)^{(p^j+1)/2} + M(X_1,X_3) + \ldots \right)^2,
        $$
        where $M(X_1,X_3)$ has degree $(h+1)-(p^j+1)/2$. Now
        $$
        2(X_1-X_3)^{(p^j+1)/2}M(X_1,X_3)=-4b_{h,0}(X_1-X_3)(X_1^h-X_3^h)
        $$
        and then $(X_1-X_3)^{(p^j-1)/2} \mid (X_1^h-X_3^h)$. Write $h=k_1p^{j_1}$, so that $(X_1^h-X_3^h)=(X_1^{k_1}-X_3^{k_1})^{p^{j_1}}$. Since $(X_1-X_3)^{p^{j_1}}$ is the highest power of $(X_1-X_3)$ dividing $(X_1^{k_1}-X_3^{k_1})^{p^{j_1}}$, we must have $(p^j-1)/2 \leq p^{j_1}$. Since $j_1 \leq j-1$,
        $$
        p^j-1 \leq 2p^{j_1} \leq 2p^j/p,
        $$
        and therefore $p^j \leq \frac{p}{p-2}$. This yields $p^j=3$, a contradiction to $p>3$.

         \item[b)] $h+1<2d_1$. Write
        $$
        \Delta(X_1,X_3)=\left( (X_1-X_3)^{(p^j+1)/2} + M(X_1,X_3) + \ldots \right)^2,
        $$
        where $M(X_1,X_3)$ has degree $2d_1-(p^j+1)/2$. Now
        $$
        2(X_1-X_3)^{(p^j+1)/2}M(X_1,X_3)=a_{d_1}^2(X_1^{d_1}-X_3^{d_1})^2
        $$
        and then $(X_1-X_3)^{(p^j+1)/2} \mid (X_1^{d_1}-X_3^{d_1})^2$. Write $d_1=k_1p^{j_1}$, so that $(X_1^{d_1}-X_3^{d_1})^2=(X_1^{k_1}-X_3^{k_1})^{2p^{j_1}}$.  We must have $(p^j+1)/2 \leq 2p^{j_1}$. Since $j_1 \leq j-1$,
        $$
        p^j+1 \leq 4p^{j_1} \leq 4p^j/p,
        $$
        and therefore $(p-4)p^j \leq -p$, a contradiction to $p>3$. 
        
        
        \item[c)] $h+1=2d_1$. Write
        \begin{eqnarray*}
        r(X_1,X_3)&=&r_{(d_2+1)/2}(X_1,X_3)+\cdots+r_{d_1}(X_1,X_3)+\cdots \\
        s(X_1,X_3)&=&s_{(d_2+1)/2}(X_1,X_3)+\cdots+s_{d_1}(X_1,X_3)+\cdots,
        \end{eqnarray*}
        where $s_i,r_i$ homogeneous of degree $i$ or the zero polynomial. 
        Since $r(X_1,X_3)+s(X_1,X_3)=\left[\sum_{i\leq d_1}a_i(X_1^i-X_3^i)\right]$, $s_{(d_2+1)/2}(X_1,X_3)=-r_{(d_2+1)/2}(X_1,X_3)$, \ldots, $s_{d_1+1}(X_1,X_3)=- r_{d_1+1}(X_1,X_3)$.
        
        The term of degree $d_2>2d_1$ in $r(X_1,X_3)s(X_1,X_3)$ is 
        $$r_{(d_2+1)/2}s_{(d_2-1)/2}+r_{(d_2-1)/2}s_{(d_2+1)/2}=-2r_{(d_2+1)/2}r_{(d_2-1)/2}\equiv 0,$$
        and then $s_{(d_2-1)/2}=r_{(d_2-1)/2}\equiv 0$.

        By induction, the term of degree $i>(d_2+1)/2+d_1\geq 2d_1=h+1$ in $r(X_1,X_3)s(X_1,X_3)$ is 
        $$r_{(d_2+1)/2}s_{i-(d_2+1)/2}+r_{i-(d_2+1)/2}s_{(d_2+1)/2}=-2r_{(d_2+1)/2}r_{i-(d_2+1)/2}\equiv 0,$$
        and then $s_{i-(d_2+1)/2}=r_{i-(d_2+1)/2}\equiv 0$. 
        
        Consider $i=(d_2+1)/2+d_1>2d_1=h+1$ in $r(X_1,X_3)s(X_1,X_3)$ is 
        \begin{eqnarray*}
        r_{(d_2+1)/2}s_{d_1}+r_{d_1}s_{(d_2+1)/2}&=&r_{(d_2+1)/2}(s_{d_1}-r_{d_1})\\
        &=&\sqrt{-b_{d_2,0}}(X_1-X_3)^{(d_2+1)/2}(s_{d_1}-r_{d_1}),\\
        &\equiv&0
        \end{eqnarray*}
        Then $r_{d_1}=s_{d_1}=a_{d_1}(X_1^{d_1}-X_3^{d_1})/2$.
        

        Suppose that $2d_1<(d_2+1)/2$. The term of degree $2d_1=h+1$ in  $r(X_1,X_3)s(X_1,X_3)$ is 
        \begin{eqnarray*}
        &&r_{d_1}s_{d_1}=b_{h,0}(X_1-X_3)(X_1^h-X_3^h)=a_{d_1}^2 (X_1^{d_1}-X_3^{d_1})^2/4.
        \end{eqnarray*}
        
        If $p\nmid d_1$ and $p\mid h$, then $(X_1-X_3)^2\mid \mid (X_1^{d_1}-X_3^{d_1})^2$ but $(X_1-X_3)^{p+1}\mid (X_1-X_3)(X_1^h-X_3^h)$.

        If $p\nmid d_1$ and $p\nmid h$, then $(X_1^h-X_3^h)/(X_1-X_3)$ is separable, whereas $(X_1^{d_1}-X_3^{d_1})^2/(X_1-X_3)^2$ is not.

        If $p\mid d_1$ then $p\nmid h$ and so $(X_1-X_3)^{2p} \mid (X_1^{d_1}-X_3^{d_1})^2$ and  $(X_1-X_3)^2\mid  \mid (X_1-X_3)(X_1^h-X_3^h)$. In all  the cases a contradiction arises.

        Suppose that $2d_1\geq (d_2+1)/2$.
        The term of degree $2d_1=h+1$ in  $r(X_1,X_3)s(X_1,X_3)$ is 
        \begin{eqnarray*}
        &&\!\!\!\!\!\!\!\!\!\!\!\!r_{d_1}s_{d_1}+r_{(d_2+1)/2}s_{2d_1-(d_2+1)/2}+r_{2d_1-(d_2+1)/2}s_{(d_2+1)/2}=b_{h,0}(X_1-X_3)(X_1^h-X_3^h)\\
        &&\!\!\!\!\!\!\!\!\!\!\!\!=a_{d_1}^2 (X_1^{d_1}-X_3^{d_1})^2/4+\sqrt{-b_{d_2,0}}(X_1-X_3)^{(d_2+1)/2}(s_{2d_1-(d_2+1)/2}-r_{2d_1-(d_2+1)/2})
        \end{eqnarray*}
        
        If $p\nmid d_1$ and $p\mid h$, then $(X_1-X_3)^2\mid \mid (X_1^{d_1}-X_3^{d_1})^2$ but $(X_1-X_3)^{p+1}\mid (X_1-X_3)(X_1^h-X_3^h)$ and $(X_1-X_3)^{(p+1)/2}\mid (X_1-X_3)^{(d_2+1)/2}$.
        
        If $p\nmid d_1$ and $p\nmid h$, then $(X_1-X_3)^2\mid \mid (X_1^{d_1}-X_3^{d_1})^2$ and $(X_1-X_3)^{2}\mid\mid (X_1-X_3)(X_1^h-X_3^h)$ and $(X_1-X_3)^{5}\mid (X_1-X_3)^{(d_2+1)/2}$, unless $d_2=p^j\leq  7$.
        So, suppose that $d_2>7$.  If $s_{2d_1-(d_2+1)/2}=r_{2d_1-(d_2+1)/2}$ the same argument as for the case  $2d_1< (d_2+1)/2$ applies. 
        
        If $s_{2d_1-(d_2+1)/2}\neq r_{2d_1-(d_2+1)/2}$, we have 
        $$(X_1-X_3)^{(d_2-3)/2} \Big | \frac{b_{h,0}(X_1^h-X_3^h)}{X_1-X_3}-\frac{a_{d_1}^2(X_1^{d_1}-X_3^{d_1})^2}{4(X_1-X_3)^2}.$$
        This is equivalent to 
    \begin{eqnarray*}
    X_1^{(d_2-3)/2}\!\! \!\!\!\! &\Big |&\!\!\!\!\!\! H(X_1,X_3) :=  \frac{b_{h,0}((X_1+X_3)^h-X_3^h)}{X_1}-\frac{a_{d_1}^2((X_1+X_3)^{d_1}-X_3^{d_1})^2}{4X_1^2}\\
    &=&\!\!\!(b_{h,0}h-a_{d_1}^2 d_1^2/4)X_3^{h-1}+\left(b_{h,0}\binom{h}{2}-\frac{a_{d_1}^2}{2} d_1 \binom{d_1}{2}\right)  X_1X_3^{h-2}+\\
    &&+\left(b_{h,0}\binom{h}{3}-\frac{a_{d_1}^2}{2}d_1\binom{d_1}{3}-\frac{a_{d_1}^2}{4}\binom{d_1}{2}^2\right)X_1^2X_3^{h-3}+\cdots
    \end{eqnarray*}
        which yields 
        $b_{h,0}h-a_{d_1}^2d_1^2/4 =0$. 
        Since we are supposing that $d_2>7$, so that $(d_2-3)/2>2$. If $h\not \equiv 1 \pmod{p}$ then   $d_1\not \equiv 1 \pmod{p}$ and  
        $$b_{h,0}\binom{h}{3}-\frac{a_{d_1}^2}{2}d_1\binom{d_1}{3}-\frac{a_{d_1}^2}{4}\binom{d_1}{2}^2\equiv 0 \pmod{p},$$
        that is 
        $$\frac{a_{d_1}^2 d_1^2 (h-1)(h-2)}{24}-\frac{a_{d_1}^2 d_1^2(d_1-1)(d_1-2)}{12}-\frac{a_{d_1}^2d_1^2(d_1-1)^2}{16}\equiv 0 \pmod{p}.$$
        Since $h=2d_1-1$, one gets $d_1\equiv 1\pmod p$, a contradiction.

        If $h\equiv 1 \pmod{p}$ then $d_1\equiv 1 \pmod{p}$. Consider $p^{i_1}\mid \mid (h-1)$ and $p^{i_2}\mid \mid (d_1-1)$, since $d_1-1=(h-1)/2$, $i_1=i_2$. Now,
        \begin{eqnarray*}
    H(X_1,X_3) &=& (b_{h,0}h-a_{d_1}^2 d_1^2/4)X_3^{h-1}+\\ &&\left(b_{h,0}\binom{h}{p^{i_1}}-\frac{a_{d_1}^2}{2} d_1\binom{d_1}{p^{i_1}}\right)X_1^{p^{i_1}-1}X_3^{h-p^{i_1}}+\\
     &&+\left(b_{h,0}\binom{h}{p^{i_1}+1}-\frac{a_{d_1}^2}{2} d_1\binom{d_1}{p^{i_1}+1}\right)X_1^{p^{i_1}}X_3^{h-p^{i_1}-1}\\
     &&+\left(b_{h,0}\binom{h}{2p^{i_1}-1}-\frac{a_{d_1}^2}{4}\binom{d_1}{p^{i_1}}^2\right)X_1^{2p^{i_1}-2}X_3^{h-2p^{i_1}+1}+\cdots.
    \end{eqnarray*}
    Since $b_{h,0}h-a_{d_1}^2 d_1^2/4=0$, 
    $$H(X_1,X_3)= \left(b_{h,0}\binom{h}{2p^{i_1}-1}-\frac{a_{d_1}^2}{4}\binom{d_1}{p^{i_1}}^2\right)X_1^{2p^{i_1}-2}X_3^{h-2p^{i_1}+1}+\cdots$$
    and therefore $(p^j-3)/2\leq 2p^{i_1}-2$, a contradiction to $p>3$.
        
        If $p\mid d_1$ then $p\nmid h$ and so $(X_1-X_3)^2\mid \mid (X_1-X_3)(X_1^{h}-X_3^{h})$ and  $(X_1-X_3)^{2p} \mid (X_1^{d_1}-X_3^{d_1})^2$ and $(X_1-X_3)^3 \mid (X_1-X_3)^{(d_2+1)/2}$, since $d_2\geq p>3$.

    \end{itemize}

\item [2)] If $d_2+1=2d_1$ then the homogeneous part of the highest degree $L(X_1,X_3)$ in $\Delta(X_1,X_3)$ is $$a_{d_1}^2(X_1^{d_1}-X_3^{d_1})^2-4b_{2d_1-1,0}(X_1-X_3)(X_1^{2d_1-1}-X_3^{2d_1-1}).$$
In the following we will prove that, if $d_1>6$ , then 

$$L(X_1,1)=(a_{d_1}^2-4b_{2d_1-1,0})X_1^{2d_1}+4b_{2d_1-1,0}X_1^{2d_1-1}-2a_{d_1}^2X_1^{d_1}+4b_{2d_1-1,0} X_1+(a_{d_1}^2-4b_{2d_1-1,0})$$
is not a square in $\overline{\mathbb{F}_q}[X_1]$. 
 
First, if $a_{d_1}^2-4b_{2d_1-1,0}=0$ then $X_1\mid \mid L(X_1,1)$ and therefore $L(X_1,1)$ is not a square in $\overline{\mathbb{F}_q}[X_1]$. So we can suppose $a_{d_1}^2-4b_{2d_1-1,0}\neq0$.

In the following we will consider the  derivatives of $L(X_1,1)$ 

\begin{eqnarray*}
L^{(1)}(X_1,1) &:=& 2d_1(a_{d_1}^2-4b_{2d_1-1,0})X_1^{2d_1-1}+4(2d_1-1)b_{2d_1-1,0}X_1^{2d_1-2}\\
&&-2d_1a_{d_1}^2X_1^{d_1-1}+4b_{2d_1-1,0},\\
L^{(2)}(X_1,1) &:=& 2d_1(2d_1-1)(a_{d_1}^2-4b_{2d_1-1,0})X_1^{2d_1-2}\\
&&+4(2d_1-1)(2d_1-2)b_{2d_1-1,0}X_1^{2d_1-3}-2d_1(d_1-1)a_{d_1}^2X_1^{d_1-2},\\
L^{(3)}(X_1,1) &:=& 2d_1(2d_1-1)(2d_1-2)(a_{d_1}^2-4b_{2d_1-1,0})X_1^{2d_1-3}\\
&&+4(2d_1-1)(2d_1-2)(2d_1-3)b_{2d_1-1,0}X_1^{2d_1-4}\\
&&-2d_1(d_1-1)(d_1-2)a_{d_1}^2X_1^{d_1-3},\\
L^{(4)}(X_1,1) &:=& 2d_1(2d_1-1)(2d_1-2)(2d_1-3)(a_{d_1}^2-4b_{2d_1-1,0})X_1^{2d_1-4}\\
&&+4(2d_1-1)(2d_1-2)(2d_1-3)(2d_1-4)b_{2d_1-1,0}X_1^{2d_1-5}\\
&&-2d_1(d_1-1)(d_1-2)(d_1-3)a_{d_1}^2X_1^{d_1-4}.\\
\end{eqnarray*}

Clearly $X_1=0$ cannot be a repeat root of $L(X_1,1)$.

$\bullet$ If $d_1\equiv 0\pmod p$, then $L^{(1)}(X_1,1)=-4b_{2d_1-1,0}(X_1^{2d_1-2}-1)$. Now,  $X_1^{d_1-1}=1$  or $X_1^{d_1-1}=-1$ yield, together with $L(X_1,1)=0$, $X_1=1$ or $(a_{d_1}^2-4b_{2d_1-1,0})X_1^{2}+(8b_{2d_1-1,0}+2a_{d_1}^2) X_1+(a_{d_1}^2-4b_{2d_1-1,0})=0$. Since $L^{(2)}(X_1,1)=8b_{2d_1-1,0}X_1^{2d_1-3}$, all the roots have at most multiplicity $2$. So, $L(X_1,1)$ cannot be a square in $\overline{\mathbb{F}_q}[X_1]$ since its degree is larger than $6$.

$\bullet$ If $d_1\equiv 1\pmod p$, since $$X_1L^{\prime}(X_1,1)-L(X_1,1)=(a_{d_1}^2-4b_{2d_1-1,0})(X_1^{2d_1}-1),$$
the only repeated roots of $L(X_1,1)$ satisfy $X_1^{2d_1}=1$. If $X_1^{d_1}=1$ or $X_1^{d_1}=-1$, then $L(X_1,1)=0$ yields $X_1=1$ or $4b_{2d_1-1,0}X_1^2+4(a_{d_1}^2-b_{2d_1-1,0})X_1+4b_{2d_1-1,0}=0$, respectively. This shows that the repeated roots of $L(X_1,1)$ are at most three. Since in this case $L^{(2)}(X_1,1)$ reads $2d_1(2d_1-1)(a_{d_1}^2-4b_{2d_1-1,0})X_1^{2d_1-2}$, there are not roots of $L(X_1,1)$ of multiplicity larger than $2$. Our assumption $d_1>6$ is sufficient to prove that   $L(X_1,1)$ cannot be a square in $\overline{\mathbb{F}_q}[X_1]$, since its degree is larger than $ 6$.

$\bullet$ If $2d_1\equiv 1\pmod p$, then 
\begin{eqnarray*}
L^{(1)}(X_1,1)&=& (a_{d_1}^2-4b_{2d_1-1,0})X_1^{2d_1-1}-a_{d_1}^2X_1^{d_1-1}+4b_{2d_1-1,0}\\
L(X_1,1)-(X_1-1)L^{(1)}(X_1,1)&=& a_{d_1}^2(X_1^{d_1}-1)(X_1^{d_1-1}-1).
\end{eqnarray*}
Both $X_1^{d_1}=1$ and $X_1^{d_1-1}=1$ combined with $L(X_1,1)=0$ give $X_1=1$. So, the only repeated root of $L(X_1,1)$ is $X_1=1$. Since $L^{(2)}(X_1,1)= -(d_1-1)a_{d_1}^2X_1^{d_1-2}$, $X_1=1$ is a double root of $L(X_1,1)$, which proves that   $L(X_1,1)$ cannot be a square in $\overline{\mathbb{F}_q}[X_1]$ (its degree is larger than $ 12$).
 
 From now on we can suppose that $d_1(d_1-1)(2d_1-1)\not\equiv 0 \pmod p$. 
First, note that $X_1=1$ is a root of multiplicity at most $4$ for $L(X_1,1)$. In fact,
\begin{eqnarray*}
L(X_1+1,1)\!\!\!\!&=&\!\!\!\!(a_{d_1}^2-4b_{2d_1-1,0})(X_1+1)^{2d_1}+4b_{2d_1-1,0}(X_1+1)^{2d_1-1}\\
&&-2a_{d_1}^2(X_1+1)^{d_1}+4b_{2d_1-1,0} X_1+a_{d_1}^2\\
\!\!\!\!&=&\!\!\!\!\left(a_{d_1}^2\left(\binom{2d_1}{2}-2\binom{d_1}{2}\right) -4b_{2d_1-1,0}\left( \binom{2d_1}{2}-\binom{2d_1-1}{2}\right)\right)X_1^2+\\
&&\!\!\!\!\left(a_{d_1}^2\left(\binom{2d_1}{3}-2\binom{d_1}{3}\right) -4b_{2d_1-1,0}\left( \binom{2d_1}{3}-\binom{2d_1-1}{3}\right)\right)X_1^3+\\
&&\!\!\!\!\left(a_{d_1}^2\left(\binom{2d_1}{4}-2\binom{d_1}{4}\right) -4b_{2d_1-1,0}\left( \binom{2d_1}{4}-\binom{2d_1-1}{4}\right)\right)X_1^4+\cdots\\
\!\!\!\!&=&\!\!\!\!\left(a_{d_1}^2d_1^2 -4b_{2d_1-1,0}(2d_1-1)\right)X_1^2+\\
&&\!\!\!\!\left(a_{d_1}^2d_1^2(d_1-1) -4b_{2d_1-1,0}(2d_1-1)(d_1-1)\right)X_1^3+\\
&&\!\!\!\!\frac{\left(a_{d_1}^2\left(7d_1^4 - 17d_1^3 + 8d_1^2 + 2d_1\right) -4b_{2d_1-1,0}\left(16 d_1^3 - 48 d_1^2 + 44 d_1 - 12\right)\right)}{12}X_1^4+\cdots\\
\end{eqnarray*} 
If both the coefficients of $X_1^2$ and $X_1^4$ vanish then $d_1^2(d_1-1)(2d_1-1)a_{d_1}^2=0$ a contradiction.

 Denote  $Y=X_1^{d_1}$, so that 
\begin{eqnarray*}
L(X_1,1)&=&(a_{d_1}^2-4b_{2d_1-1,0})Y^2+4b_{2d_1-1,0}\frac{Y^2}{X_1}-2a_{d_1}^2Y\\
&&+4b_{2d_1-1,0} X_1+(a_{d_1}^2-4b_{2d_1-1,0})\\
L^{(1)}(X_1,1) &=& 2d_1(a_{d_1}^2-4b_{2d_1-1,0})\frac{Y^2}{X_1}+4(2d_1-1)b_{2d_1-1,0}\frac{Y^2}{X_1^2}\\
&&-2d_1a_{d_1}^2\frac{Y}{X_1}+4b_{2d_1-1,0},\\
L^{(2)}(X_1,1) &=& 2d_1(2d_1-1)(a_{d_1}^2-4b_{2d_1-1,0})\frac{Y^2}{X_1^2}+4(2d_1-1)(2d_1-2)b_{2d_1-1,0}\frac{Y^2}{X_1^3}\\
&&-2d_1(d_1-1)a_{d_1}^2\frac{Y}{X_1^2},\\
L^{(3)}(X_1,1) &=& 2d_1(2d_1-1)(2d_1-2)(a_{d_1}^2-4b_{2d_1-1,0})\frac{Y^2}{X_1^3}\\
&&+4(2d_1-1)(2d_1-2)(2d_1-3)b_{2d_1-1,0}\frac{Y^2}{X_1^4}\\
&&-2d_1(d_1-1)(d_1-2)a_{d_1}^2\frac{Y}{X_1^3},\\
L^{(4)}(X_1,1) &=& 2d_1(2d_1-1)(2d_1-2)(2d_1-3)(a_{d_1}^2-4b_{2d_1-1,0})\frac{Y^2}{X_1^4}\\
&&+4(2d_1-1)(2d_1-2)(2d_1-3)(2d_1-4)b_{2d_1-1,0}\frac{Y^2}{X_1^5}\\
&&-2d_1(d_1-1)(d_1-2)(d_1-3)a_{d_1}^2\frac{Y}{X_1^4}.
\end{eqnarray*}

The resultant of $L(X_1,1)$ and $L^{(1)}(X_1,1)$ with respect to $Y$ reads
$$
16b_{2d_1-1,0} (a_{d_1}^2-4b_{2d_1-1,0})X_1^2(X_1-1)^2Z(X_1),
$$
where 
\begin{eqnarray*}
Z(X_1)&:=&b_{2d_1-1,0}(2 d_1 - 1)^2(a_{d_1}^2 - 4 b_{2d_1-1,0})X_1^2
+(a_{d_1}^4 d_1^2 - 8 a_{d_1}^2 b_{2d_1-1,0} d_1^2\\ && + 2 a_{d_1}^2 b_{2d_1-1,0}+ 32 b_{2d_1-1,0}^2 d_1^2 - 32 b_{2d_1-1,0}^2 d_1 + 8 b_{2d_1-1,0}^2)X_1\\
&&+b_{2d_1-1,0}(2 d_1 - 1)^2(a_{d_1}^2 - 4 b_{2d_1-1,0}).
\end{eqnarray*}

Multiple roots of $L(X_1,1)$ distinct from $X_1=1$ must satisfy $Z(X_1)=0$.

$\bullet$ If $d_1\equiv 2\pmod p$, then $L^{(4)}(X_1,1)=24(a_{d_1}^2-4b_{2d_1-1,0})X_1^{2d_1-4}$ and all the roots have multiplicity at most $4$. This shows that   $L(X_1,1)$ cannot be a square in $\overline{\mathbb{F}_q}[X_1]$ (its degree is larger than $ 12$).

$\bullet$ If $d_1\not\equiv 2\pmod p$, then the resultant of $L^{(2)}(X_1,1)/Y$ and $L^{(3)}(X_1,1)/Y$ with respect to $Y$ reads
$$
4(2 d_1 - 1) a_{d_1}^2 d_1 (d_1-1) X_1(X_1 a_{d_1}^2 d_1^2 - 4 X_1 b_{2d_1-1,0} d_1^2 + 4 b_{2d_1-1,0} d_1^2 - 8 b_{2d_1-1,0} d_1 + 4 b_{2d_1-1,0}),
$$
where as the resultant of $L^{(3)}(X_1,1)/Y$ and $L^{(4)}(X_1,1)/Y$ with respect to $Y$ reads

\begin{eqnarray*}
8(d_1-2)(d_1 - 1)^2 d_1a_{d_1}^2(2 d_1-1)X_1\cdot\\
\cdot(X_1 a_{d_1}^2 d_1^2 - 4 X_1 b_{2d_1-1,0} d_1^2 + 4 b_{2d_1-1,0} d_1^2 - 10 b_{2d_1-1,0} d_1 + 6 b_{2d_1-1,0}).
\end{eqnarray*}
The unique possible root of multiplicity larger than $4$ must satisfy 
$$X_1= -\frac{4 b_{2d_1-1,0} (d_1-1)^2}{d_1^2(a_{d_1}^2-4b_{2d_1-1,0})}= -\frac{2b_{2d_1-1,0}( 2 d_1^2 - 5 d_1 + 3) }{d_1^2(a_{d_1}^2-4b_{2d_1-1,0})}$$
which implies $d_1(d_1 - 1)b_{2d_1-1,0}(a_{d_1}^2 - 4 b_{2d_1-1,0})=0$, a contradiction. Therefore, all the (at most three) multiple roots of $L(X_1,1)$ have multiplicity at most $4$ and so $L(X_1,1)$ cannot be a square in $\overline{\mathbb{F}_q}[X_1]$ (its degree is larger than $ 12$). 

\endproof

\begin{proposition}\label{prop:p=3}
Let $\mathcal{S}_{f_1,f_2}$ be as in \eqref{Eq:S_f}, and $p=3$. If $d=d_2>d_1>1$ and $(j_1,j_2)=(1,0)$ then $\mathcal{S}_{f_1,f_2}$ contains an absolutely irreducible component defined over $\mathbb{F}_q$, unless one of the following holds
\begin{enumerate}
    \item $d=d_2=2d_1-1\leq 11$;
    \item $h+1<2d_1< d_2=3^j$ and $3 \mid d_1$;
    \item $h+1=2d_1<d_2=3^j$ and $3 \mid (d_1-1)$.
\end{enumerate} 
\end{proposition}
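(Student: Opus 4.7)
The strategy is to adapt the argument of Proposition~\ref{prop:p>3}, isolating the steps where the hypothesis $p>3$ was actually used and asking what survives when $p=3$. Assuming $\mathcal{S}_{f_1,f_2}$ is reducible, Remark~\ref{remark:fattorizzazione} still produces $r,s\in\overline{\mathbb{F}_q}[X_1,X_3]$ with $r+s=\sum_i a_i(X_1^i-X_3^i)$ and $rs=(X_1-X_3)\sum_i b_{i,0}(X_1^i-X_3^i)$, so the problem reduces to deciding when
\[
\Delta(X_1,X_3)=\Bigl(\sum_i a_i(X_1^i-X_3^i)\Bigr)^2-4(X_1-X_3)\sum_i b_{i,0}(X_1^i-X_3^i)
\]
is a square in $\overline{\mathbb{F}_q}[X_1,X_3]$. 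The case $\deg r>\deg s$ again collapses to the contradiction $1+d_2\leq 2d_1-1\leq d_2-1$, so one may assume $\deg r=\deg s=(d_2+1)/2$, and hence $d_1\leq(d_2+1)/2$.

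In the subregime $d_1\leq d_2/2$, inspection of the top form of $\Delta$ forces $d_2=3^j$, and the subcase $h=\infty$ yields the contradiction $a_{d_1}=0$ exactly as in characteristic $>3$. The trichotomy on $h+1$ versus $2d_1$ is where $p=3$ diverges. For $h+1>2d_1$ the divisibility $(X_1-X_3)^{(3^j-1)/2}\mid(X_1^h-X_3^h)$ forces $3^j\leq 3$, incompatible with $d_1>1$. For $h+1<2d_1$ the analogous divisibility gives $(3^j+1)/2\leq 2\cdot 3^{j_1}$ with $j_1\leq j-1$, forcing $j_1=j-1$, so $d_1=k_1\cdot 3^{j-1}$ with $3\nmid k_1$; the subcase $j=1$ produces $d_1\leq 1$, so $j\geq 2$ and $3\mid d_1$, which is exception~(2).

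The case $h+1=2d_1$ requires the graded expansion of Proposition~\ref{prop:p>3}: writing $r=r_{(d_2+1)/2}+\cdots+r_{d_1}+\cdots$ and inductively setting $s_i=-r_i$ down to $i=d_1+1$, one arrives at the degree-$2d_1$ identity
\[
r_{d_1}s_{d_1}+\sqrt{-b_{d_2,0}}(X_1-X_3)^{(d_2+1)/2}\bigl(s_{2d_1-(d_2+1)/2}-r_{2d_1-(d_2+1)/2}\bigr)=b_{h,0}(X_1-X_3)(X_1^h-X_3^h).
\]
Splitting on the residues of $d_1$ and $h=2d_1-1$ modulo $3$, the $(X_1-X_3)$-valuation eliminates every configuration except $d_1\equiv 1\pmod 3$; in that class the auxiliary identity $H(X_1,X_3)$ of Proposition~\ref{prop:p>3} must be re-examined, and the $p>3$ bound $(3^j-3)/2>2\cdot 3^{i_1}-2$ fails exactly when $3\mid(d_1-1)$, giving exception~(3).

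Finally, the symmetric sub-case $d_2=2d_1-1$ is treated through $L(X_1,1)$ and its iterated derivatives $L^{(1)},\ldots,L^{(4)}$; the resultant computations carry over whenever $d_1(d_1-1)(2d_1-1)\not\equiv 0\pmod 3$, and in every residue class the maximum multiplicity of a repeated root of $L(X_1,1)$ is at most four, so $L(X_1,1)$ cannot be a square once its degree $2d_1$ exceeds $12$, leaving the residual range $d_2\leq 11$ as exception~(1). The main technical obstacle is the case $h+1=2d_1$: in characteristic three the fine arithmetic of binomial coefficients modulo $3$ inside $H(X_1,X_3)$ becomes delicate, and the precise residue condition $3\mid(d_1-1)$ must be shown to be exactly what keeps the leading obstructing coefficient from vanishing.
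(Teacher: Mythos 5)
Your proposal is correct and follows essentially the same route as the paper: both adapt Proposition~\ref{prop:p>3}, reduce reducibility of $\mathcal{S}_{f_1,f_2}$ to $\Delta(X_1,X_3)$ being a square, and locate the failures of the $p>3$ contradictions in exactly the same three places (the case $h+1>2d_1$ forces $d_2=3$ and hence $d_1\leq 1$; the case $h+1<2d_1$ survives only when $3\mid d_1$, giving exception~(2); the case $h+1=2d_1$ survives only when $3\mid(d_1-1)$, giving exception~(3)), with the $d_2=2d_1-1$ analysis carried over to yield exception~(1). The only differences are cosmetic: in the case $h+1<2d_1$ you use the square-root expansion of $\Delta$ where the paper uses the graded $r,s$ expansion (these are equivalent since $\Delta=(r-s)^2$, and your version even pins down $d_1=3^{j-1}$), while your remark about the resultant computations in the $d_2=2d_1-1$ case is vacuous for $p=3$, since one of $d_1$, $d_1-1$, $2d_1-1$ is always divisible by $3$, so only the three derivative-based residue-class arguments of Proposition~\ref{prop:p>3} are ever needed there.
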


\proof
The proof is similar to that of Proposition \ref{prop:p>3} and we use the same notations. 




If $d_2+1=2d_1$ the claim follows as in Proposition \ref{prop:p>3}. From now on we assume $d_1 \leq d_2/2$. The homogenous part of the highest degree in $\Delta(X_1,X_3)$ is $L(X_1,X_3):=-4b_{d_2,0}(X_1-X_3)(X_1^{d_2}-X_3^{d_2})$. If $\Delta(X_1,X_3)$ is a square in $\overline{\mathbb{F}_q}[X_1,X_3]$ so it is $L(X_1,X_3)$. Let $d_2=k \cdot 3^j$ for some non-negative $j$ and $3 \nmid k$.

If $k>1$ then  $L(X_1,X_3)$ cannot be a square in $\overline{\mathbb{F}_q}[X_1,X_3]$.
    
From now on we can assume that  $k=1$ and then $d_2=3^j$. In this case $d_1\leq (d_2-1)/2$ and $j>1$.

If $h=\infty$ the claim follows as in Proposition  \ref{prop:p>3}. 

In the case $h<\infty$ we distinguish three cases:
    \begin{itemize}
         \item[a)] $h+1>2d_1$. Following the same argument as in the proof of Proposition \ref{prop:p>3}, we get $d=d_2=3$ and therefore $1<d_1\leq 1$.

        \item[b)] $h+1<2d_1$. Write
        \begin{eqnarray*}
        r(X_1,X_3)&=&r_{(d_2+1)/2}(X_1,X_3)+\cdots+r_{d_1}(X_1,X_3)+\cdots \\
        s(X_1,X_3)&=&s_{(d_2+1)/2}(X_1,X_3)+\cdots+s_{d_1}(X_1,X_3)+\cdots,
        \end{eqnarray*}
        where $s_i,r_i$ homogeneous of degree $i$ or the zero polynomial. As in Proposition \ref{prop:p>3} it follows that $s_{(d_2+1)/2}=-r_{(d_2+1)/2}=-\sqrt{-b_{d_2,0}}(X_1-X_3)^{(d_2+1)/2}$,
$s_{i}(X_1,X_3)=r_{i}(X_1,X_3) \equiv 0$, for any $d_1 < i <  (d_2+1)/2$, and
$r_{d_1}(X_1,X_3)=s_{d_1}(X_1,X_3)=a_{d_1}(X_1^{d_1}-X_3^{d_1})/2$. 

Suppose that $2d_1<(d_2+1)/2$. The term of degree $2d_1>h+1$ in  $r(X_1,X_3)s(X_1,X_3)$ is 
        \begin{eqnarray*}
        &&r_{d_1}s_{d_1}=a_{d_1}^2 (X_1^{d_1}-X_3^{d_1})^2/4 \equiv 0,
        \end{eqnarray*}
        a contradiction to $a_{d_1} \neq 0$.
        
        Suppose that $2d_1\geq (d_2+1)/2$.
        The term of degree $2d_1>h+1$ in  $r(X_1,X_3)s(X_1,X_3)$ is 
        \begin{eqnarray*}
        r_{d_1}s_{d_1}+r_{(d_2+1)/2}s_{2d_1-(d_2+1)/2}+r_{2d_1-(d_2+1)/2}s_{(d_2+1)/2}&=&\\
        a_{d_1}^2 (X_1^{d_1}-X_3^{d_1})^2/4+\sqrt{-b_{d_2,0}}(X_1-X_3)^{(d_2+1)/2}(s_{2d_1-(d_2+1)/2}-r_{2d_1-(d_2+1)/2})&\equiv&0
        \end{eqnarray*}
        Then
        $$
        a_{d_1}^2 (X_1^{d_1}-X_3^{d_1})^2/4=\sqrt{-b_{d_2,0}}(X_1-X_3)^{(d_2+1)/2}(r_{2d_1-(d_2+1)/2}-s_{2d_1-(d_2+1)/2}).
        $$
        If $s_{2d_1-(d_2+1)/2}=r_{2d_1-(d_2+1)/2}$ then $a_{d_1}$ must be zero.\\
        If $s_{2d_1-(d_2+1)/2} \neq r_{2d_1-(d_2+1)/2}$ and $3 \nmid d_1$, then $(X_1-X_3)^2 || (X_1^{d_1}-X_3^{d_1})^2$ but $(d_2+1)/2>2$. In both cases a contradiction arises.

        \item[c)] $h+1=2d_1$. If $d_1 \not\equiv 1 \pmod 3$, the claim follows as Proposition \ref{prop:p>3}.
        \end{itemize}
       
\endproof

\end{itemize}

We are now in position to prove the main result of this paper.

\noindent \emph{ Proof of the Main Theorem.}
By Remark \ref{remark:4.1}, Theorem \ref{Th:finalp=2}, Propositions \ref{Proposition:6.1}, \ref{prop:d_2>d_1=1}, \ref{prop:p>3}, \ref{prop:p=3} if  there exists $b_{i,j}\neq0$ with  $j\neq 0$ and none of the conditions listed above holds, then $\mathcal{S}_{f_1,f_2}$ defined as in \eqref{Eq:S_f} contains an absolutely irreducible $\mathbb{F}_q$-rational component. By Theorem \ref{Th:Main}  $\mO_5(f_1,f_2)$ is not an ovoid. 





\end{document}